\numberwithin{equation}{section}
\newcommand{\ip}{\int_{\mathbb{R}}}
\newcommand{\btheta}{{\bar \theta}}
\newcommand\R{\mathbb R}
\def\eps{\varepsilon}
\def\bV{{\hat{v}}}
\newcommand\br{\begin{remark}}
\newcommand\er{\end{remark}}
\newcommand\bp{\begin{pmatrix}}
\newcommand\ep{\end{pmatrix}}
\newcommand\be{\begin{equation}}
\newcommand\ee{\end{equation}}
\newcommand\ba{\begin{equation}\begin{aligned}}
\newcommand\ea{\end{aligned}\end{equation}}
\newcommand{\bap}{\begin{app}}
\newcommand{\eap}{\end{app}}
\newcommand{\begs}{\begin{exams}}
\newcommand{\eegs}{\end{exams}}
\newcommand{\beg}{\begin{example}}
\newcommand{\eeg}{\end{exaplem}}
\newcommand{\bpr}{\begin{proposition}}
\newcommand{\epr}{\end{proposition}}
\newcommand{\bt}{\begin{theorem}}
\newcommand{\et}{\end{theorem}}
\newcommand{\bc}{\begin{corollary}}
\newcommand{\ec}{\end{corollary}}
\newcommand{\bl}{\begin{lemma}}
\newcommand{\el}{\end{lemma}}
\newcommand{\bd}{\begin{definition}}
\newcommand{\ed}{\end{definition}}
\newcommand{\brs}{\begin{remarks}}
\newcommand{\ers}{\end{remarks}}
\newcommand{\rank}{{\rm rank }}
\newcommand{\CalT}{\mathcal{T}}
\newcommand{\RR}{{\mathbb R}}
\newcommand{\CC}{{\mathbb C}}
\newcommand{\const}{\text{\rm constant}}
\newcommand{\Span}{{\rm Span }}
\newcommand{\sgn}{\text{\rm sgn}}
\newtheorem{theorem}{Theorem}[section]
\newtheorem{proposition}[theorem]{Proposition}
\newtheorem{corollary}[theorem]{Corollary}
\newtheorem{lemma}[theorem]{Lemma}
\newtheorem{definition}[theorem]{Definition}
\newtheorem{example}[theorem]{Example}
\newtheorem{remark}[theorem]{Remark}
\newtheorem{exams}[theorem]{Examples}
\newcommand\cV{{\cal  V}}
\newcommand\cW{{\cal  W}}
\newcommand\cR{{\cal  R}}
\newcommand\cL{{\cal  L}}
\newcommand\cP{{\cal  P}}
\newcommand\cM{{\mathcal M}}
\title{
One-dimensional stability of parallel shock\\ 
layers in isentropic magnetohydrodynamics
}
\author{\sc \small
Blake Barker\thanks{ Brigham Young University, Provo, UT 84602;
bhbarker@gmail.com: Research of B.B. was partially supported 
under NSF grants number DMS-0607721 and DMS-0300487.},
Jeffrey Humpherys \thanks{ Brigham Young University, Provo, UT 84602;
jeffh@math.byu.edu: Research of J.H. was partially supported 
under NSF grant DMS-0607721 and DMS-CAREER-0847074.},
Kevin Zumbrun\thanks{Indiana University, Bloomington, IN 47405;
kzumbrun@indiana.edu: Research of K.Z. was partially supported
under NSF grants number DMS-0070765 and DMS-0300487.}
}
\begin{document}

\maketitle


\begin{abstract}
Extending investigations of Barker, Humpherys, Lafitte, Rudd, and Zumbrun for compressible gas dynamics and Freist\"uhler and Trakhinin for compressible magnetohydrodynamics, we study by a combination of asymptotic ODE estimates and numerical Evans function computations the one-dimensional stability of parallel isentropic magnetohydrodynamic shock layers over the full range of physical parameters (shock amplitude, strength of imposed magnetic field, viscosity, magnetic permeability, and electrical resistivity) for a $\gamma$-law gas with $\gamma\in [1,3]$.  Other $\gamma$-values may be treated similarly, but were not checked numerically.  Depending on magnetic field strength, these shocks may be of fast Lax, intermediate (overcompressive), or slow Lax type;  however, the shock layer is independent of magnetic field, consisting of a purely gas-dynamical profile.  In each case, our results indicate stability. Interesting features of the analysis are the need to renormalize the Evans function in order to pass continuously across parameter values where the shock changes type or toward the large-amplitude limit at frequency $\lambda=0$ and the systematic use of winding number computations on Riemann surfaces.
\end{abstract}

\section{Introduction}
In this paper, continuing investigations of \cite{BHRZ, HLZ,FT}, we study by a combination of asymptotic ODE estimates and numerical Evans function computations the one-dimensional stability of parallel isentropic magnetohydrodynamic (MHD) shock layers over a full range of physical parameters, including arbitrarily large shock amplitude and strength of imposed magnetic field, for a $\gamma$-law gas with $\gamma\in [1,3]$, with our main emphasis on the case of an ideal monatomic or diatomic gas.  The restriction to $\gamma \in [1,3]$ is an arbitrary one coming from the choice of parameters on which the numerical study is carried out; stability for other $\gamma$ can be easily checked as well. (Note that our analytical results are for any $\gamma \ge 1$.)  In each case, we obtain results indicative of stability.  Recall that Evans stability, defined in terms of the Evans function associated with the linearized operator about the wave, by the ``Lyapunov-type'' results of \cite{MaZ3,MaZ4,Z2,Z3,HR,HRZ,RZ}, implies linear and nonlinear stability for all except the measure-zero set of parameters on which the characteristic speeds of the endstates coincide with the shock speed or each other.\footnote{For these degenerate cases, the stability analysis has not been carried out in the generality considered here.  However, see the related analyses for Lax shock of \cite{HoZ,H} in the case that shock and characteristic speed coincide and \cite{Z2} in the case that characteristic speeds coincide, which suggest that the shocks may be nonetheless stable.}

Parallel shocks may be of fast Lax, intermediate (overcompressive), or slow Lax type depending on magnetic field strength; however, the shock layer is independent of magnetic field, consisting of a purely gas-dynamical profile.  Thus, the study of their stability is both a natural next step to and an interesting generalization of the investigations of stability of gas-dynamical shocks in \cite{HLZ}.  See also the investigations of stability of fast parallel Lax shocks in certain parameter regimes in \cite{FT} using energy methods, and of general fast Lax shocks in the small-magnetic field limit in \cite{GMWZ5,GMWZ6} using Evans function techniques.

\subsection{Equations}\label{eqs}
In Lagrangian coordinates, the equations for compressible isentropic MHD in one dimension take the form 
\begin{equation}
\left\{\begin{array}{l}
v_t -u_{1x} = 0,\\
 u_{1t} + (p+ (1/2\mu_0)(B_2^2+B_3^2))_x =(((2\mu+\eta)/v) u_{1x})_x,\\
 u_{2t}  - ((1/\mu_0)B_1^*B_2)_x =((\mu/v) u_{2x})_x,\\
 u_{3t}  - ((1/\mu_0)B_1^*B_3)_x =((\mu/v) u_{3x})_x,\\
 (vB_2)_{t}  - (B_1^*u_2)_x =((1/\sigma\mu_0 v) B_{2x})_x,\\
 (vB_3)_{t}  - (B_1^*u_3)_x =((1/\sigma\mu_0 v) B_{3x})_x,\\
\end{array}\right.
\label{MHD}
\end{equation}
where $v$ denotes specific volume, $u=(u_1,u_2,u_3)$ velocity, $p=p(v)$ pressure, $B=(B_1^*,B_2,B_3)$ magnetic induction, $B_1^*$ constant, and $\mu>0$ and $\eta>0$ the two coefficients of viscosity, $\mu_0>0$ the magnetic permeability, and $\sigma>0$ the electrical resistivity; see \cite{A,C,J,Kaw} for further discussion.

We restrict to an ideal isentropic polytropic gas, in which case the pressure function takes form
\begin{equation}
p(v)=av^{-\gamma}
\label{eq:ideal_gas}
\end{equation}
where $a > 0$ and $\gamma>1$ are constants that characterize the gas.  In our numerical investigations, we shall focus mainly on the most common cases of a monatomic gas, $\gamma=5/3$, and a diatomic gas, $\gamma=7/5$; more generally, we investigate all $\gamma\in [1,3]$.  With brief exceptions (e.g., Section \ref{ratio}), we take
\begin{equation}
\label{eta}
\eta=-2\mu/3,
\end{equation}
as typically prescribed for (nonmagnetic) gas dynamics \cite{Ba}.

Here, we are allowing $u$ and $B$ to vary in full three-dimensional space, but restricting spatial dependence to a single direction $e_1$ measured by $x$.  That is, we consider {\it planar solutions}, or three-dimensional solutions with one-dimensional dependence
on spatial variables.  Note that the divergence-free condition ${\rm div}_x B\equiv 0$ of full MHD reduces in the planar case to our assumption that $B_1\equiv \const=B_1^*$.  In the simplest, {\it parallel} case 
\be
\label{par}
B_2=B_3\equiv 0; \, 
u_2=u_3\equiv 0,
\ee
equations \eqref{MHD} reduce to the one-dimensional isentropic compressible Navier--Stokes equations
\begin{equation}
\left\{\begin{array}{l}
v_t -u_{1x} = 0,\\
 u_{1t} + p_x =(((2\mu+\eta)/v) u_{1x})_x.\\
\end{array}\right.
\label{psys}
\end{equation}
In the remainder of the paper, we study traveling-wave solutions in this special parallel case and their stability with respect to general (not necessarily parallel) planar perturbations.

\subsection{Viscous shock profiles}
\label{sec:viscous}
A \emph{viscous shock profile} of \eqref{MHD} is an asymptotically-constant traveling-wave solution
\ba
(v,u,B)(x,t)&=(\hat v,\hat u, \hat B) (x-st),
\quad 
\lim_{z\to \pm \infty} = (v_\pm,u_\pm,B_\pm).
\label{eq:tw_ansatz}
\ea
In the parallel case, these are of the simple form
$$
(\hat v,\hat u, \hat B) (x-st)=
(\hat v,\hat u_1,0,0,B_1^*,0,0 ) (x-st),
$$
where $(\hat v, \hat u_1)$ is a gas-dynamical shock profile satisfying the traveling-wave ODE
\begin{equation}
\left\{\begin{array}{l}
-sv_x -u_{1x} = 0,\\
  -s u_{1x} + p_x =(((2\mu+ \eta)/v) u_{1x})_x.\\
\end{array}\right.
\label{prof1}
\end{equation}

\subsection{Rescaled equations}\label{rescaled}

By a preliminary rescaling in $x$, $t$, we may arrange without loss of generality $\mu=1$.  Following the approach of \cite{HLZ,HLyZ1,HLyZ2}, we now rescale
\be\nonumber
(v,u_1,u_2,u_3,\mu_0,x,t,B)\to
\Big(\frac{v}{\varepsilon}, -\frac{u_1}{\varepsilon s}, 
\frac{u_2}{\varepsilon}, \frac{u_3}{\varepsilon},
\varepsilon \mu_0, -\varepsilon s (x-st), \varepsilon s^2t,\frac{B}{s}\Big)
\ee
holding $\mu$, $\sigma$ fixed, where $\eps:=v_-$, transforming \eqref{MHD} to the form
\begin{small}
\begin{equation}\label{redeqs}
\left\{
\begin{aligned}
    v_t+v_x-u_{1x}&=0\\
    u_{1t}+u_{1x}+\left(av^{-\gamma}+\left(\frac{1}{2\mu_0}\right)\left(B_2^2+B_3^2\right)\right)_x&=(2\mu +\eta)\left(\frac{u_{1x}}{v}\right)_x\\
    u_{2t}+u_{2x}-\left(\frac{1}{\mu_0}B_1^*B_2\right)_x&=\ \mu \left(\frac{u_{2x}}{v}\right)_x  \\
    u_{3t}+u_{3x}-\left(\frac{1}{\mu_0}B_1^*B_3\right)_x&=\mu \left(\frac{u_{3x}}{v}\right)_x\\
    \left(vB_2\right)_t+\left(vB_2\right)_x-\left(B_1^*u_2\right)_x&=\left(\left(\frac{1}{\sigma \mu_0 v}\right)B_{2x}\right)_x\\
    \left(vB_3\right)_t+\left(vB_3\right)_x-\left(B_1^*u_3\right)_x&=\left(\left(\frac{1}{\sigma \mu_0 v}\right)B_{3x}\right)_x
\end{aligned}
\right.
\end{equation}
\end{small}
where $p(v)=a_0v^{-\gamma}$ and $a=a_0\varepsilon^{-\gamma-1}s^{-2}$.

By this step, we reduce without loss of generality to the case of a shock profile with speed $s=-1$, left endstate
\be\label{left}
(v,u_1,u_2,u_3,B_1,B_2,B_3)_-= (1,0,0,0,B_1^*,0,0),
\ee
and right endstate
\be\label{right}
(v,u_1,u_2,u_3,B_1,B_2,B_3)_+= (v_+,v_+-1 ,0,0,B_1^*,0,0),
\ee
satisfying the profile ODE 
\begin{equation}
\label{profeq}
(2\mu+\eta) v' =H(v, v_+):= v(v-1 + a  (v^{-\gamma}-1))
\end{equation}
(obtained by integrating \eqref{prof1} and substituting the first equation into the second) where $1=v_-\ge v_+>0$ and (setting $v'=0$ at $v=v_+$ and solving)
\begin{equation}
\label{RH}
a = -\frac{v_+ - 1}{v_+^{-\gamma} - 1} = v_+^\gamma \frac{1-v_+}{1-v_+^\gamma}.
\end{equation}
See \cite{BHRZ,HLZ} for further details.

\begin{proposition} [\cite{BHRZ}]
\label{profdecay}
For each $\gamma\ge 1$, $0<v_+\le 1-\eps$, $\eps>0$, \eqref{profeq} has a unique (up to translation) monotone decreasing solution $\hat v$ decaying to its endstates with a uniform exponential rate, independent of $v_+$, $\gamma$.  In particular, for $0<v_+\le \frac{1}{12}$ and $\hat v(0):=v_+ + \frac{1}{12}$, 
\begin{subequations}
\label{decaybd}
\begin{align}
|\bV(x)-v_+|&\le \Big(\frac{1}{12}\Big)e^{-\frac{3x} {4}} \quad x\ge 0,\label{decaybd_1}\\
|\bV(x)-v_-|&\le 
\Big(\frac{1}{4}\Big)
e^{\frac{x+12}{2}} \quad x\le 0\label{decaybd_2}.
\end{align}
\end{subequations}
\end{proposition}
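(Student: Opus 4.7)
The profile equation \eqref{profeq} is a scalar autonomous ODE, so my plan is to use phase-plane analysis for the qualitative statements (existence, uniqueness, monotonicity) and then comparison with linear ODEs for the quantitative decay. As a preparatory factorization, write $H(v,v_+) = v\,f(v)$ where $f(v) := v - 1 + a(v^{-\gamma} - 1)$. Then $f(1) = 0$ by inspection, $f(v_+) = 0$ by the Rankine--Hugoniot relation \eqref{RH}, and $f''(v) = a\gamma(\gamma+1)v^{-\gamma-2} > 0$ on $(0,\infty)$. A strictly convex function with two zeros on $[v_+,1]$ is strictly negative between them, so $H < 0$ on $(v_+,1)$, and scalar ODE theory produces a unique (up to translation) monotone decreasing heteroclinic $\bV$ from $v_- = 1$ at $x = -\infty$ to $v_+$ at $x = +\infty$.

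For the uniform exponential rate, I would compute $H_v(1,v_+) = 1 - a\gamma$ and $H_v(v_+,v_+) = v_+(1 - a\gamma v_+^{-\gamma-1})$ and verify with the help of \eqref{RH} that $a\gamma < 1$ and $a\gamma v_+^{-\gamma-1} > 1$ for every $v_+ \in (0,1)$ and $\gamma \ge 1$. Each inequality reduces to the positivity on $(0,1)$ of an elementary function that vanishes at $v_+ = 1$ and is monotone in $v_+$. Hence $v = 1$ is hyperbolic unstable and $v = v_+$ hyperbolic stable with the corresponding linearized decay rates, and the restriction $v_+ \le 1-\eps$ bounds both rates below by a positive constant, uniform in $v_+$ and $\gamma$, as asserted in the first sentence of the proposition.

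For the explicit bounds with $v_+ \le 1/12$, set $w(x) := \bV(x) - v_+$ on $x \ge 0$; the ODE reads $(4/3)\,w' = H(v_+ + w,v_+)$. Using $H = v f$, the convexity of $f$, and the closed form of $a$ from \eqref{RH}, I would establish a uniform one-sided bound $H(v_+ + w,v_+) \le -w$ for $w \in (0,1/12]$ valid over the full parameter range $v_+ \in (0,1/12]$, $\gamma \ge 1$. Gr\"onwall then yields $w(x) \le (1/12) e^{-3x/4}$, which is \eqref{decaybd_1}. The bound \eqref{decaybd_2} is obtained by a parallel comparison near $v = 1$ yielding decay rate $1/2$; the additional translation $x \mapsto x + 12$ in the exponent accounts for the time needed to traverse the nonlinear middle of the profile from a neighborhood of $v_- = 1$ down to the normalization point $\bV(0) = v_+ + 1/12$.

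The main obstacle is the uniform comparison in the last step: the constants implicit in the linear upper bounds on $H$ must be universal over the full parameter range $(v_+,\gamma) \in (0,1/12] \times [1,\infty)$, yet $\gamma$ enters $H$ nonlinearly through $a$. One must in particular check the asymptotic regimes $\gamma \to \infty$ and $v_+ \to 0$; the closed form \eqref{RH}, combined with the strict convexity of $f$ on $(0,\infty)$ and the specific choice of the normalization point, should supply the required uniform constants. Everything else is phase-plane bookkeeping.
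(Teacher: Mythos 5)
This proposition is cited from \cite{BHRZ}; the present paper gives no proof of its own, so there is nothing in-paper to compare against. Your qualitative phase-plane outline is sound and essentially standard: the factorization $H=vf$, strict convexity of $f$ on $(0,\infty)$, the two simple zeros at $v_+$ and $1$ (from \eqref{RH}), and the resulting unique monotone heteroclinic; the endpoint computations $H_v(1,v_+)=1-a\gamma$, $H_v(v_+,v_+)=v_+(1-a\gamma v_+^{-\gamma-1})$ and the sign checks $a\gamma<1$, $a\gamma v_+^{-\gamma-1}>1$ are correct.

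The gap is exactly where you flag it, but it is worse than a missing verification: the comparison $H(v_++w,v_+)\le -w$ you propose is false over part of the stated range. As $w\to 0^+$ it forces $-H_v(v_+,v_+)=a\gamma v_+^{-\gamma}-v_+\ge 1$; but $a\gamma v_+^{-\gamma}=\gamma(1-v_+)/(1-v_+^\gamma)$ equals $1$ at $\gamma=1$, so the left-hand side there is $1-v_+<1$. No choice of constants closes that step near $\gamma=1$, and the claim that \eqref{RH} and convexity ``should supply the required uniform constants'' cannot be made good in the form you wrote. The underlying difficulty is a normalization mismatch you should notice before computing: the numbers $3/4$ and $1/2$ in \eqref{decaybd} are those of \cite{BHRZ}, where the profile ODE is $v'=H(v,v_+)$ with the (single) viscosity coefficient scaled to one, whereas \eqref{profeq} here carries the prefactor $(2\mu+\eta)=4/3$. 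With that prefactor the linearized decay rate at $v_+$ is $(3/4)\,|H_v(v_+,v_+)|\le(3/4)(1-v_+)<3/4$ at $\gamma=1$, so \eqref{decaybd_1} as written cannot hold with equality at $x=0$ and rate $3/4$. Working in the \cite{BHRZ} normalization $v'=H$, the correct target comparison is $H(v)\le -(3/4)(v-v_+)$ on $(v_+,v_++1/12]$: at $\gamma=1$ this is $(v-1)(v-v_+)\le -(3/4)(v-v_+)$, i.e.\ $v\le 1/4$, which holds since $v\le 1/6$; one must then check it uniformly over $\gamma\ge 1$ (the crude convexity bound $f(v)/(v-v_+)\le f'(v)$ is too lossy for this, so a different uniformization is needed). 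In short: right outline, but the specific linear upper bound on $H$ you wrote down fails, and reconciling the prefactor of $v'$ in \eqref{profeq} with the constants in \eqref{decaybd} is a substantive issue, not bookkeeping.
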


\bc\label{limv}
Initializing $\hat v(0):=v_+ + \frac{1}{12}$ as in Proposition \ref{profdecay}, $\hat v$ converges uniformly as $v_+\to 0$ to a translate $\hat v_0$ of $ \frac{1-\tanh \big( \frac{x}{2(2\mu+\eta)} \big)}{2} $.
\ec

\begin{proof}
By \eqref{RH}, $a\sim v_+^\gamma \to 0$ as $v_+\to 0$, whence the result follows on any bounded set $|x|\le L$ by continuous dependence, taking the limit as $a\to 0$ in \eqref{profeq} to obtain a limiting flow of $ v'=\frac{v(1-v)}{2\mu+\eta}$.  Taking now $L\to \infty$, the result follows for $|x|\ge M$ by $v_+\to 0$ and $|\hat v-v_+|\le Ce^{-\theta M}$; see \eqref{decaybd_1}--\eqref{decaybd_2}.
\end{proof}

\subsection{Families of shock profiles}
At this point, we have reduced our study of parallel shock stability, for a fixed gas constant $\gamma$, to consideration of a one-parameter family of profiles indexed by the right endstate $1\ge v_+ > 0$ and a four-parameter family of equations \eqref{redeqs} indexed (through \eqref{RH}) by $v_+$ and the three remaining physical parameters 
\be
\label{paramlist}
\mu_0 >0, \, \sigma >0,  \,B_1^*\ge 0,
\ee
where we have taken $B_1^*$ without loss of generality to be nonnegative by use of the symmetry under $B\to -B$ of 
\eqref{MHD}.  Here, the small-amplitude limit corresponds to $v_+\to v_-=1$ and the large-amplitude limit to $v_+\to 0$, where in this scaling the amplitude is given by $|v_--v_+|$.

A straightforward computation 
shows that the characteristics of the first-order hyperbolic system obtained by neglecting second-derivative terms in \eqref{redeqs} at the endstates $v_\pm$ have values
\be\label{charvals}
(1\pm c(v)), 1, 1, \Big(1\pm \frac{B_1^*}{\sqrt{\mu_0 v_\pm}}\Big),
\ee
where $c(v):=\sqrt{-p'(v)}=\sqrt{\gamma a v^{-\gamma-1}}$ is the gas-dynamical sound speed, satisfying $c_+>1>c_-$.  Thus, the shock is a {\it Lax $1$-shock} for $0\le B_1^* < \sqrt{\mu_0 v_+}$, meaning that it has six positive characteristics at $v_-$ and one at $v_+$; an {\it intermediate doubly overcompressive shock} for $\sqrt{\mu_0 v_+} < B_1^* < \sqrt{\mu_0 }$, meaning that it has six positive characteristics at $v_-$ and three at $v_+$; and a {\it Lax $3$-shock} for $\sqrt{\mu_0 }< B_1^*$, meaning that it has $4$ positive characteristics at $v_-$ and three at $v_+$.

For Lax $1$- and $3$-shocks, the profile \eqref{eq:tw_ansatz} is generically (and always for $1$-shocks) unique up to translation as a traveling-wave solution of the full equations connecting endstates \eqref{left} and \eqref{right}, i.e., even among possibly nonparallel solutions.  That is, it lies generically within a one-parameter family
$\{\hat U^\xi\}=\{ (\hat v, \hat u_1, \hat u_2, \hat u_3, \hat B_2, \hat B_3)^\xi\}$
of viscous shock profiles, $\xi \in \R$, with $\hat U^\xi(x):=\hat U(x-\xi)$. For overcompressive shocks, it lies generically within a three-parameter family
$\{ (\hat v, \hat u_1, \hat u_2, \hat u_3, \hat B_2, \hat B_3)^\xi\}$
of viscous profiles and their translates, $\xi \in \R^3$, of which it is the unique parallel solution up to translation \cite{MaZ3}.  For further discussion of hyperbolic shock type and its relation to existence of viscous profiles, see, e.g., \cite{LZ,ZH,Z1,Z2,MaZ3}.

\subsection{Evans, spectral, and nonlinear stability}\label{sec:spec}

Following \cite{ZH,MaZ3,Z2}, define {\it spectral stability} as nonexistence of nonstable eigenvalues $\Re \lambda\ge 0$ of the linearized operator about the wave, other than at $\lambda=0$ (where there is always an eigenvalue, due to translational invariance of the underlying equations).  A slightly stronger condition is {\it Evans stability}, which for Lax or overcompressive shocks may be defined \cite{ZH,MaZ3,HLyZ2} as nonvanishing for all $\Re\lambda\ge 0$ of the Evans function associated with the integrated eigenvalue equation about the wave.  See \cite{AGJ,GZ,Z1,Z2,MaZ3} for a general definition of the Evans function associated with a system of ordinary differential equations; for a definition in the present context, see Section \ref{evans}.  Recall that zeros of the Evans function (either integrated or nonintegrated) agree with eigenvalues of the linearized operator about the wave on $\{\Re \lambda\ge 0\} \setminus \{0\}$, so that Evans stability implies spectral stability.

The following ``Lyapunov-type'' result of Raoofi \cite{Ra}, specialized to our case, states that, for generic parameter values, Evans stability implies {\it nonlinear orbital stability}, regardless of the type 
of the shock; see also \cite{MaZ4,Z2,HRZ,RZ}.

\begin{proposition}[\cite{Ra}]\label{orbital}
Let $\hat U:=(\hat v, \hat u_1, \hat u_2, \hat u_3, \hat B_2, \hat B_3)$ be a parallel viscous shock profile of \eqref{MHD}--\eqref{eq:ideal_gas} connecting endstates \eqref{left}--\eqref{right}, 
with characteristics \eqref{charvals} distinct and nonzero, that is Evans stable.
Then, for any solution 
$\tilde U:=(\tilde v, \tilde u_1, \tilde u_2, \tilde u_3, \tilde B_2, \tilde B_3)$ 
of \eqref{MHD} with $L^1\cap H^3$ initial difference and $L^1$-first moment
$E_0:=\| \tilde U(\cdot, 0)- \hat U\|_{L^1\cap H^3}$
and
$E_1:=\| |x| \, | \tilde U(\cdot, 0)- \hat U| \|_{L^1}$
sufficiently small and some uniform $C>0$, $\tilde U$ exists for all $t\ge 0$, with
\begin{equation}
\label{stabstatement}
\begin{aligned}
\|  \tilde U(\cdot, t)- \hat U(\cdot -st)\|_{L^1\cap H^3}
&\le CE_0
\quad
\hbox{\rm (stability)}.\\
\end{aligned}
\end{equation}
Moreover, there exist $\alpha(t)$, $\alpha_\infty$ such that
\begin{equation}\label{stabstatement2}
\begin{aligned}
\| \tilde U(\cdot, t)-  \hat U^{\alpha(t)}(\cdot -st))\|_{L^p}
&\le CE_0(1+t)^{-(1/2)(1-1/p)},\\
\end{aligned}
\end{equation}
and
\begin{equation}
\label{phasebd}
|\alpha(t)- \alpha_\infty|,
\, 
(1+t)^{1/2}|\dot \alpha(t)|
\le C(\varepsilon) 
\max\{E_0,E_1\} (1+t)^{-1/2+\varepsilon}, 
\end{equation}
for all $1\le p\le \infty$,
$\varepsilon>0$ arbitrary
\quad
{\rm (phase-asymptotic orbital stability)}.
\end{proposition}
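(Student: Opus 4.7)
The plan is to deduce this from the general Lyapunov-type framework of \cite{Ra,MaZ4,Z2,HRZ,RZ}, verifying that its structural hypotheses are met in the parallel MHD setting. The framework decomposes proof of nonlinear orbital stability into (i) linearized Green's function estimates derived from Evans stability, and (ii) a nonlinear iteration in Duhamel form using a carefully chosen phase shift $\alpha(t)$.

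First, I would pass to the \emph{integrated} variables $W := \int_{-\infty}^{x}(\tilde U - \hat U^{\alpha(t)})(y,t)\,dy$ (after first subtracting the translation by $st$), noting that translational invariance of \eqref{MHD} makes the eigenvalue $\lambda = 0$ removable in the integrated equation, so that Evans stability on $\{\Re\lambda \ge 0\}$ becomes strict spectral stability of the integrated linearized operator $L$. The hypothesis that the characteristics in \eqref{charvals} are distinct and nonzero ensures that $L$ is a uniformly hyperbolic--parabolic operator with strictly noncharacteristic shock (in the sense of \cite{MaZ3}), so the asymptotic coefficient matrices $A_\pm$ are hyperbolic in the appropriate block structure and the high-frequency resolvent estimates of \cite{MaZ3,Z2} apply. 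Nonzero characteristics also guarantee the sharp Gaussian/convection-kernel bounds rather than degenerate algebraic bounds (cf.\ \cite{HoZ,H,Z2}).

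Second, I would assemble the pointwise Green's function bound $G(x,t;y) = \sum_j c_j^{\pm}(y,t)\hat U'(x) + \CalE(x,t;y) + \tilde G(x,t;y)$ as in \cite{MaZ3,Ra}, where the excited term $\sum c_j^{\pm}\hat U'$ captures the contribution of the zero eigenvalue(s) corresponding to translation (one copy for Lax, three for overcompressive), $\CalE$ is a sum of scattering/convection terms with Gaussian decay along outgoing characteristics, and $\tilde G$ is a fast-decaying remainder. These bounds follow by inverse Laplace transform using the contour deformations of \cite{MaZ3,Z2,Ra}, with Evans stability supplying nonvanishing denominators in the low-frequency expansion. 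The parallel shock structure is favorable here: the transverse variables $(u_2,u_3,B_2,B_3)$ decouple from $(v,u_1)$ on the profile, so the Green's function analysis splits into a gas-dynamical part (already treated in \cite{HLZ,MaZ4}) and a transverse part driven by constant-coefficient-like equations at $\hat v$.

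Third, I would set up the nonlinear iteration. Write $\tilde U(x,t) - \hat U^{\alpha(t)}(x-st) = V(x,t)$, plug into Duhamel's formula for $V$, and choose $\alpha(t)$ (as in \cite{ZH,MaZ3,Ra}) so as to project out the neutral $\hat U'$ contribution of the Green's function at each $t$; this gives a coupled integral system for $(V,\dot\alpha)$. The bounds \eqref{stabstatement}, \eqref{stabstatement2}, \eqref{phasebd} then follow by a contraction/continuation argument using the Green's function estimates together with the small-$E_0,E_1$ hypothesis, exactly as in the Lax case of \cite{MaZ4} or overcompressive case of \cite{Ra,RZ}. The $L^1$ first-moment hypothesis is what controls the phase correction in \eqref{phasebd}.

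The only nonroutine point--the main obstacle--is verifying that the Evans-function framework of \cite{Ra} applies uniformly across the three possible shock types (Lax $1$-, overcompressive, Lax $3$-) that parallel MHD shocks exhibit as $B_1^*$ varies, since the number of neutral translational modes and the dimension of $\alpha$ change with type. Under the nondegeneracy hypothesis on \eqref{charvals} this dimension count is locally constant, so one simply invokes the appropriate variant of the framework (Lax version of \cite{MaZ4,HRZ} or overcompressive version of \cite{Ra,RZ}) for the given shock type; the arguments are otherwise identical. The additional structural requirements of \cite{Ra} (genuine coupling, symmetrizability, and the Kawashima condition) are straightforward to check for isentropic MHD with the parabolic dissipation in \eqref{MHD} under the nondegeneracy of characteristics in \eqref{charvals}.
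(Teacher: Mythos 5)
The paper provides no proof of this proposition: it is cited directly from Raoofi \cite{Ra}, specialized to parallel MHD, with the surrounding text pointing to \cite{MaZ4,Z2,HRZ,RZ} for the Lax and over/undercompressive variants. There is therefore no route in the paper to compare against yours. Your outline is nevertheless a faithful summary of the framework of those references: a pointwise Green's function decomposition (excited translational term $\sum c_j \hat U'$, plus scattering/convective terms, plus fast remainder) obtained by contour deformation of the inverse Laplace transform, with Evans nonvanishing controlling low frequencies and the symmetric hyperbolic--parabolic/Kawashima structure of \eqref{MHD} (valid away from characteristic degeneracies, which the hypothesis on \eqref{charvals} rules out) controlling high frequencies; a Duhamel iteration with $\alpha(t)$ chosen to absorb the excited term; and the $L^1$ first-moment hypothesis producing \eqref{phasebd}. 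You also correctly isolate the one genuinely delicate point in the specialization, namely applying the framework uniformly across Lax $1$-, overcompressive, and Lax $3$-shock types, with the dimension of $\alpha$ changing with type.

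One imprecision worth flagging: your step (1) treats passing to integrated variables as part of the nonlinear argument, but in \cite{MaZ3,MaZ4,Ra} integration is a tool for the spectral/Evans-function analysis only (cf.\ Remark \ref{intrmk}, where it removes the origin zeros in the overcompressive case). The nonlinear iteration is run in the original, unintegrated variables, and the $\lambda=0$ contribution appears as the explicit excited term $\sum c_j \hat U'$ that the phase shift $\alpha(t)$ is constructed to cancel, rather than as something removed by integration. Your steps (2)--(3) already describe this unintegrated formulation, so step (1) as written is inconsistent with them; it should be rephrased as the spectral input to the resolvent estimates (Evans nonvanishing $\Leftrightarrow$ strict spectral stability of the integrated operator $\Leftrightarrow$ no embedded eigenvalues on $\{\Re\lambda\ge 0\}\setminus\{0\}$) rather than as a change of variables for the nonlinear problem.
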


Finally, recalling that Evans stability for Lax shocks is equivalent to the three conditions of spectral stability, transversality of the traveling wave as a connecting orbit of \eqref{profeq}, and inviscid stability of the shock while Evans stability for overcompressive shocks is equivalent to spectral stability, transversality, and an ``inviscid stability''-like low-frequency stability condition generalizing the Lopatinski condition of the Lax case \cite{ZH,MaZ3,Z2}, we obtain the following partial converse allowing us to make stability conclusions from spectral information alone.

\bpr
\label{loptran}
A parallel viscous shock profile of \eqref{MHD}--\eqref{eq:ideal_gas}, 
\eqref{left}--\eqref{right}, 
that is a Lax $1$-shock and spectrally stable is also Evans stable (hence, for generic parameters, nonlinearly orbitally stable).
A parallel viscous shock profile that is an intermediate (overcompressive) shock, spectrally stable and low-frequency stable is Evans stable.  A parallel viscous shock profile that is a Lax $3$-shock, spectrally stable, and transverse is Evans stable.
For $\mu=1$ and 
$B_1^*\ge \max\left\{ \sqrt{ \frac{\gamma \mu_0}{2}},
\sqrt{\frac{\gamma}{2\sigma}} \, \right\} + \sqrt{\mu_0}$,
Lax $3$-shocks are transverse. For parallel viscous shocks of any type, spectral stability implies Evans (and nonlinear) stability on a generic set of parameters.
\epr

\begin{proof}
Lax $1$-shocks and intermediate-shocks, as extreme shocks (i.e.,
all characteristics entering the shock from the $-\infty$ side), 
are always transversal \cite{MaZ3}.
One-dimensional inviscid stability of either Lax $1$- or $3$-shocks
follows by a straightforward calculation using decoupling of the linearized
equations into $(v,u_1)$ and $(u_2, B_2)$ and $(u_3,B_3)$ systems
\cite{BT,T,FT}.
Transversality for large $B_1^*$ is shown in Proposition \ref{transest}.
Finally, both transversality (in the Lax $3$-shock case) 
and (in the overcompressive case) low-frequency stability conditions 
can be expressed as nonvanishing of functions that are analytic
in the model parameters, hence either vanish everywhere 
or on a measure zero set.
It may be shown that these are both
nonvanishing for sufficiently weak profiles $|1-v_+|$ small,\footnote{
For Lax $3$-shocks, transversality follows for small amplitudes by
the center-manifold analysis of \cite{P}.
For overcompressive shocks, taking $B_1^*= (1/2)(\sqrt{\mu_0} + \sqrt{\mu v_+})$
as $v_+\to 1$, using decoupling of $(v,u_1)$ and $(u_j,B_j)$ equations
and performing a center manifold reduction in the $(u_j,B_j)$ equation
of the traveling-wave ODE written as a first-order system, $j=2,3$, we find
that this reduces in each case to a one-dimensional fiber, whence decaying
solutions of the linearized profile equation, corresponding to variations
other than translation in the family of profiles $\hat U^\alpha$, 
are of one sign and thus have nonzero 
total integral $\int_{-\infty}^{+\infty}(u_j,B_j)(x) dx$.
But this is readily seen \cite{Z1} to be equivalent to low-frequency
stability in the small-amplitude limit.
}
hence they are generically nonvanishing.
From these facts, the result follows.
\end{proof}

\br
Our numerical results indicate Evans stability for all parameters,
which implies in passing uniform transversality of $3$- 
and overcompressive-shock profiles and low-frequency stability
of overcompressive profiles.
Transversality is a minimal condition for orbital
stability, being needed even to guarantee existence of the smooth
manifold $\hat U^\alpha$ under discussion \cite{MaZ3}.
As discussed above, it is {\rm not} implied by spectral stability
alone.\footnote{Thus, for example, the spectral stability results
obtained by energy estimates in \cite{FT} for intermediate- or Lax $3$-shocks
do not by themselves imply linearized or nonlinear stability,
but require an additional study of transversality/low-frequency
stability.} 
\er

\subsection{The reduced linearized eigenvalue equations}
Linearizing \eqref{redeqs} 
about a parallel shock profile 
$(\hat v, \hat u_1, 0,0,B_1^*,0,0)$, we obtain a decoupled system
\begin{equation}\label{fullsys}
\left\{
\begin{aligned}
    v_t+v_x-u_{1x}&=0\\
    u_{1t}+u_{1x}-a\gamma \left(\hat v^{-\gamma-1}v\right)_x&=(2\mu +\eta)\left(\frac{u_{1x}}{\hat v}+\frac{\hat u_{1x}}{\hat v^2} v \right)_x\\
    u_{2t}+u_{2x}-\frac{1}{\mu_0}(B_1^*B_2)_x&=\ \mu \left( \frac{u_{2x}}{\hat v}  \right)_x  \\
    u_{3t}+u_{3x}-\frac{1}{\mu_0}(B_1^*B_3)_x&=\ \mu \left( \frac{u_{3x}}{\hat v}  \right)_x  \\
    \left(\hat vB_2\right)_t+\left(\hat vB_2\right)_x-(B_1^*u_2)_x&=\left(\left(\frac{1}{\sigma \mu_0 }\right)\frac{B_{2x}}{\hat v}\right)_x\\
    \left(\hat vB_3\right)_t+\left(\hat vB_3\right)_x-(B_1^*u_3)_x&=\left(\left(\frac{1}{\sigma \mu_0 }\right)\frac{B_{3x}}{\hat v}\right)_x,
\end{aligned}
\right.
\end{equation}
consisting of the linearized isentropic gas dynamic equations in
$(v,u_1)$ about profile $(\hat v, \hat u_1)$, and two copies
of an equation in variables $(u_j,\hat v B_j)$, $j=2,3$. 

Introducing integrated variables $V:=\int v$, $U:=\int u_1$ and
$w_j:=\int u_j$, $\alpha_j:=\int \hat v B_j$, $j=2,3$,
we find that the integrated linearized eigenvalue equations decouple into
the integrated linearized eigenvalue equations for gas dynamics
in variables $(V,U)$ and two copies of
\begin{equation}\label{eval}
\left\{
\begin{aligned}
    \lambda w+w'-\frac{B_1^*\alpha'}{\mu_0\hat v}&=\mu
    \frac{w''}{\hat v}\\
    \lambda \alpha +\alpha' -B_1^*w'&=\frac{1}{\sigma \mu_0\hat
    v}\left( \frac{\alpha'}{\hat v}\right)'
\end{aligned}
    \right.
\end{equation}
in variables $(w_j, \alpha_j)$, $j=2,3$.

As noted in \cite{ZH,MaZ3,HLyZ2}, spectral stability is unaffected
by the change to integrated variables.
Thus, spectral stability of parallel MHD shocks, 
decouples into the conditions of
spectral stability of the associated gas-dynamical shock
as a solution of the isentropic Navier--Stokes equations \eqref{psys},
and spectral stability of system \eqref{eval}.
Assuming stability of the gas-dynamical shock (as has been
verified in great generality in \cite{HLZ, HLyZ1}), {\it 
spectral stability of parallel MHD shocks thus reduces
to the study of the reduced eigenvalue problem \eqref{eval},}
into which the shock structure enters only through density
profile $\hat v$.
Likewise, the Evans function associated with the full system
\eqref{fullsys} decouples into the product of the Evans function 
for the gas-dyamical eigenvalue equations and the Evans function 
for the reduced eigenvalue problem \eqref{eval}.
Thus, assuming stability of the associated gas-dynamical shock,
{\it Evans stability of parallel MHD shocks reduces
to Evans stability of \eqref{eval}}.

\br\label{intrmk}
The change to integrated coordinates removes two additional zeros of
the Evans function for the reduced equations \eqref{eval} that
would otherwise occur at the origin in the overcompressive case,
making possible a unified study across different parameter values/shock types.
\er

\subsection{Analytical stability results}\label{sec:analytical}

\subsubsection{The case of infinite resistivity/permeability}
We start with the observation that, by a straightforward energy estimate,
parallel shocks are {\it unconditionally stable} in transverse
modes $(\tilde u,\tilde B)$ in the formal limit as either electrical
resistivity $\sigma$ or magnetic permeability $\mu_0$ go to infinity,
for quite general equations of state.
This is suggestive, perhaps, of a general trend toward stability.

\bt\label{enprop}
In the degenerate case $\mu_0=\infty$ or $\sigma=\infty$,
parallel MHD shocks are transversal, Lopatinski stable (resp. low-frequency
stable), and spectrally stable with respect to 
transverse modes $(\tilde u, \tilde B)$, 
for all physical parameter values, hence are Evans (and thus
nonlinearly) stable whenever
the associated gas-dynamical shock is Evans stable.
\et

\begin{proof}
By Proposition \ref{loptran}, Lopatinski stability holds for
Lax-type shocks, and transversality holds for Lax $1$-shocks and
overcompressive shocks.  
Noting that the (decoupled)
transverse part of linearized traveling-wave ODE for $\sigma=\infty$
or $\mu_0=\infty$ reduces to $(d-1)$ copies of the same scalar equation,
and recalling that transversality/Lopatinski stability hold always for the
decoupled gas-dynamical part \cite{HLZ}, we readily verify low-frequency
stability in the overcompressive case and transversality in the Lax
$3$-shock case as well.\footnote{ 
For scalar equations, transversality is immediate.
Likewise, decaying solutions of the linearized profile equation, 
corresponding to variations other than translation in the family of 
profiles $\hat U^\alpha$, are necessarily of one sign and thus have nonzero 
total integral $\int_{-\infty}^{+\infty}(u_j,B_j)(x) dx$.
But this is readily seen \cite{Z1} to be equivalent to low-frequency
stability in the small-amplitude limit.
}
Thus, we need only verify transverse spectral stability, or nonexistence
of decaying solutions of \eqref{eval}.

For $\sigma=\infty$, we may rewrite \eqref{eval} in symmetric form as
\begin{equation}\label{infsigmares}
\begin{aligned}
 \mu_0 \hat v \lambda w + \mu_0 \hat v w'  - B_1^* \alpha' &=\mu \mu_0 w'',\\
 \lambda \alpha  + \alpha' - B_1^*w' &=0.
\end{aligned}
\end{equation}
Taking the real part of the complex $L^2$-inner product of 
$w$ against the first equation and
$\alpha$ against the second equation and summing gives
$$
\Re \lambda( \int (\hat v \mu_0 |w|^2+|\alpha|^2) =
-\int \mu \mu_0 |w'|^2 +\int \hat v_x |w|^2 <0,
$$
a contradiction for $\Re \lambda \ge 0$ and $w$
not identically zero.  If $w\equiv 0$ on the other
hand, we have a constant-coefficient equation for $\alpha$,
which is therefore stable. The $\mu_0=\infty$ case goes
similarly; see Appendix \ref{infmu0}.
\end{proof}

Notably, this includes all three cases: fast Lax, overcompressive,
and slow Lax type shock.
Further, the same proof yields the result
for the more general class of equations of state $p(\cdot)$
satisfying 
$\frac{p(v_+)-p(v_-)}{v_+-v_-}<0$, so that $\hat v_x<0$ for $s<0$.
With the analytical results of \cite{HLZ}, 
we obtain in particular the following asymptotic results.

\bc\label{rigstab}
For $\sigma=\infty$ or $\mu_0=\infty$,
parallel isentropic MHD shocks with ideal gas equation of state,
whether Lax or overcompressive type,
are linearly and nonlinearly stable in 
the small- and large-amplitude limits $v_+\to 1$ and $v_+\to 0$,
for all physical parameter values.
\ec

\subsubsection{Bounds on the unstable spectrum}
\label{s:hf}

By a considerably more sophisticated energy estimate,
we can bound the size of unstable eigenvalues 
{\it uniformly} in $1\ge v_+>0$ and the gas constant $\gamma\ge 1$ 
to a ball of radius depending on $\sigma$, $\mu_0$, $B_1^*$,
a crucial step in studying the limit $v_+\to 0$.

\begin{theorem}\label{hfthm}
Nonstable eigenvalues $\Re \lambda\ge 0$ of \eqref{eval}
are confined for $0<v_+\le 1$ to the region 
\begin{equation}
\label{est2}
\Re \lambda  + |\Im \lambda | < \frac{1}{2}\max 
\Big\{\frac{1}{\mu},\mu_{0}\sigma \Big\} 
+ (B_1^*)^{2} \sqrt{\dfrac{\sigma}{\mu\mu_{0}}}.
\end{equation}
\end{theorem}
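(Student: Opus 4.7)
The plan is a weighted $L^2$ energy estimate on \eqref{eval}. Introduce a scaling parameter $\theta>0$ (to be fixed as $\theta^4=\mu/(\sigma\mu_0)$), test the first equation of \eqref{eval} against $(\mu_0/\theta)\hat v\,\bar w$ and the second against $\sigma\mu_0\theta\,\hat v\,\bar\alpha$, integrate by parts on the diffusive terms, and sum. This yields the complex identity
\begin{equation*}
\lambda\,\mathcal N_\theta + \mathcal D_\theta \;=\; \mathcal C_\theta + \mathcal X_\theta,
\end{equation*}
where $\mathcal N_\theta := (\mu_0/\theta)\int\hat v|w|^2 + \sigma\mu_0\theta\int\hat v|\alpha|^2 > 0$ is the weighted norm, $\mathcal D_\theta := (\mu\mu_0/\theta)\int|w'|^2 + \theta\int|\alpha'|^2/\hat v \ge 0$ is the diffusive dissipation, $\mathcal C_\theta := -(\mu_0/\theta)\int\hat v w'\bar w - \sigma\mu_0\theta\int\hat v\alpha'\bar\alpha$ collects the transport convective terms, and $\mathcal X_\theta := (B_1^*/\theta)\int\alpha'\bar w + \sigma\mu_0\theta B_1^*\int\hat v w'\bar\alpha$ is the magnetic cross-coupling.

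Integration by parts gives $\Re\mathcal C_\theta = (\mu_0/(2\theta))\int\hat v_x|w|^2 + (\sigma\mu_0\theta/2)\int\hat v_x|\alpha|^2 \le 0$ by the monotonicity $\hat v_x\le 0$ (Proposition \ref{profdecay}). Taking real and imaginary parts of the identity and using the elementary inequality $\Re z + |\Im z| \le \sqrt 2\,|z|$ applied to $z = \mathcal X_\theta$ yields
\begin{equation*}
(\Re\lambda + |\Im\lambda|)\,\mathcal N_\theta + \mathcal D_\theta \;\le\; \sqrt 2\,|\mathcal X_\theta| + |\Im\mathcal C_\theta|.
\end{equation*}

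I would then estimate $\sqrt 2|\mathcal X_\theta|$ and $|\Im\mathcal C_\theta|$ by Cauchy--Schwarz (using $0<\hat v\le 1$) followed by Young's inequality, absorbing fractions $\eta\mathcal D_\theta$ and $\beta\mathcal D_\theta$ of the dissipation respectively. The choice $\theta^4=\mu/(\sigma\mu_0)$ equalizes the $w$- and $\alpha$-contributions to the $\mathcal N_\theta$-coefficient in the $\mathcal X_\theta$-estimate, replacing a naive arithmetic-mean bound by the sharp geometric mean $\sqrt{\sigma/(\mu\mu_0)}$; explicit computation gives
\begin{equation*}
\sqrt 2|\mathcal X_\theta| \;\le\; \eta\mathcal D_\theta + \frac{(B_1^*)^2}{2\eta}\sqrt{\sigma/(\mu\mu_0)}\,\mathcal N_\theta, \qquad |\Im\mathcal C_\theta| \;\le\; \beta\mathcal D_\theta + \frac{1}{4\beta}\max\{1/\mu,\sigma\mu_0\}\,\mathcal N_\theta.
\end{equation*}
Setting $\eta=\beta=1/2$ exactly exhausts $\mathcal D_\theta$ and produces the stated inequality, with strictness following from $\mathcal D_\theta > 0$ on any nontrivial decaying eigenfunction.

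The main obstacle is the simultaneous tuning of the three free parameters: the scaling $\theta$ must be chosen as $(\mu/(\sigma\mu_0))^{1/4}$ to deliver the geometric-mean constant $\sqrt{\sigma/(\mu\mu_0)}$, and $\eta,\beta$ must be chosen as $1/2$ to simultaneously attain the claimed coefficients $(B_1^*)^2$ and $1/2$ on the magnetic and convective contributions while not overspending $\mathcal D_\theta$. Uniformity in $v_+\in(0,1]$ and $\gamma\ge 1$ is preserved throughout because every estimate invokes only the qualitative bounds $0<\hat v\le 1$ and $\hat v_x\le 0$ from Proposition \ref{profdecay}, and never quantitative $v_+$- or $\gamma$-dependent profile data --- a key point for the subsequent large-amplitude analysis.
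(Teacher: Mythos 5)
Your argument is essentially identical to the paper's proof: a weighted $L^2$ energy estimate using the $\hat v$-weight, the inequality $\Re z + |\Im z|\le\sqrt{2}|z|$, monotonicity $\hat v_x<0$, and Young's inequality, with the weight ratio tuned so that the magnetic cross-term produces the geometric mean $\sqrt{\sigma/(\mu\mu_0)}$. The only cosmetic difference is that you encode the weighting up front via the single scalar $\theta$ (with $\theta^{4}=\mu/(\sigma\mu_0)$), whereas the paper derives two unweighted identities, adds $C$ times the second to the first, and then optimizes four separate Young parameters; the effective weight ratio $\sigma\theta^{2}=\sqrt{\sigma\mu/\mu_0}=C$ is the same. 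One small slip: with $\eta=\beta=1/2$ the dissipation $\mathcal D_\theta$ cancels exactly on both sides of the inequality, so it cannot be the source of strictness; strictness comes instead from dropping $\Re\mathcal C_\theta<0$ (strict since $\hat v_x<0$ everywhere and the eigenfunction is nontrivial).
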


\begin{proof}
See Appendix \ref{misc}.
\end{proof}

\subsubsection{Asymptotic Evans function analysis}

Denoting by $D(\lambda)$ the ``reduced'' Evans function 
(defined Section \ref{evans}) 
associated with the reduced eigenvalue equations \eqref{eval},
we introduce the pair of renormalizations
\be \label{checkevans}
\begin{aligned}
\check D(\lambda)&:=
\frac{ \Big( (1-B_1^*/\sqrt{\mu_0 })^2 + 4\lambda
 (\mu/2 + 1/2\sigma \mu_0 )
\Big)^{1/4}}
{ \Big( (1-B_1^*/\sqrt{\mu_0 })^2 + 4
 (\mu/2 + 1/2\sigma \mu_0 )
\Big)^{1/4}}
 \frac {\Big(v_+/4+\lambda\Big)^{1/4}} {\Big(v_+/4 + 1 \Big)^{1/4}}
\\
&\quad \times
\frac{ \Big( (1-B_1^*/\sqrt{\mu_0 v_+})^2 + 4\lambda
 (\mu/2v_+ + 1/2\sigma \mu_0 v_+^2)
\Big)^{1/4}} 
 {\Big( (1-B_1^*/\sqrt{\mu_0 v_+})^2 + 
4 (\mu/2v_+ + 1/2\sigma \mu_0 v_+^2) \Big)^{1/4}} 
D(\lambda)\\
\end{aligned}
\ee
and
\be \label{hatevans}
\begin{aligned}
\hat D(\lambda)&:=
\frac{ \Big( (1-B_1^*/\sqrt{\mu_0 })^2 + 4\lambda
 (\mu/2 + 1/2\sigma \mu_0 )
\Big)^{1/4}}
 {\Big( (1-B_1^*/\sqrt{\mu_0 })^2 + 
4 (\mu/2 + 1/2\sigma \mu_0 ) \Big)^{1/4}} D(\lambda).\\
\end{aligned}
\ee

\noindent{\bf Intermediate behavior.}
\bt\label{contD}
On $\Re \lambda\ge 0$, the reduced Evans function $D$ is analytic
in $\lambda$ and continuous in all parameters
except at $v_+=0$
and $B_1=\sqrt{\mu_0 v_\pm}$, at which points
it exhibits algebraic singularities (blow-up) at $\lambda=0$.
The renormalized Evans functions $\check D$ and $\hat D$
are analytic in $\lambda$
and continuous in all parameters except at $(\lambda, v_+)=(0,0)$.
\et

\begin{proof}
Immediate from Propositions \ref{evprops} and \ref{limconv}.
\end{proof}

\noindent{\bf The small-amplitude limit.}
\bpr [\cite{HuZ1,PZ}] \label{smallamp}
For $\sigma>0$, $\mu_0>0$, $B_1^*>0$ bounded, and 
$B_1^*$ bounded away from $\sqrt{\mu_0}$,
parallel shocks are Evans stable in both full and reduced sense
in the small-amplitude limit $v_+\to 1$.
Moreover, $D$ converges uniformly on compact subsets of $\{\Re \lambda \ge 0\}$
 as $v_+\to 1$ to a nonzero real constant.
\epr

\begin{proof}
For $|v_--v_+|=|1-v_+|$ sufficiently small and $B_1^*$ bounded
away from $\sqrt{\mu_0}$, the associated
profile must be a Lax $1$- or $3$-shock, whence stability
follows by the small-amplitude results obtained by energy
estimates in \cite{HuZ1} or
by asymptotic Evans function techniques in \cite{PZ}.
Convergence on compact sets
follows by the argument of
Proposition 4.9, \cite{HLyZ1}, which likewise uses techniques from \cite{PZ}.
\end{proof}

\noindent {\bf The large-amplitude limit.}
\bt\label{largeamp}
For $\sigma$, $\mu_0$ and $B_1^*$ bounded, the reduced Evans function
$D(\lambda)$ converges uniformly on compact subsets of 
$\{\Re \lambda\ge 0 \}\setminus \{0\}$ 
in the large-amplitude limit $v_+\to 0$ 
to a limiting Evans function $D^0(\lambda)$ obtained by substituting
$\hat v_0$ for $\hat v$ in \eqref{eval}, $\hat v_0$ as in Proposition \ref{limv};
see Definition \ref{limev} for a precise definition.
Likewise, $\check D$ and $\hat D$ converge to
\be\label{checkD1}
\check D^0(\lambda):=
\frac{ \Big(1-B_1^*/\sqrt{\mu_0 })^2 + 4\lambda
 (\mu/2 + 1/2\sigma \mu_0 )
\Big)^{1/4}}
{ \Big(1-B_1^*/\sqrt{\mu_0 })^2 + 4
 (\mu/2 + 1/2\sigma \mu_0 )
\Big)^{1/4}}
\lambda^{1/2} D^0(\lambda) 
\ee
and
\be\label{hatD1}
\hat D^0(\lambda):=\frac{ \Big( (1-B_1^*/\sqrt{\mu_0 })^2 + 4\lambda
 (\mu/2 + 1/2\sigma \mu_0 )
\Big)^{1/4}}
 {\Big( (1-B_1^*/\sqrt{\mu_0 })^2 + 
4 (\mu/2 + 1/2\sigma \mu_0 ) \Big)^{1/4}} D^0(\lambda),
	\ee
%
%
each continuous on $\Re \lambda\ge 0$.
Moreover, for $B_1^*< \sqrt{\mu_0}$, 
nonvanishing of  $\check D^0$ on $\{\Re \lambda > 0\}$ is necessary and
nonvanishing of $\check D^0$ on $\{\Re \lambda \ge 0\}$
is sufficient for reduced Evans stability (i.e., nonvanishing of $\check D$, $D$
on $\{\Re \lambda > 0\}$ for $v_+>0$ sufficiently small. 
For $B_1^* > \sqrt{\mu_0}$,
nonvanishing of $\hat D^0$ on $\{\Re \lambda > 0\}$ 
is necessary and
nonvanishing of $\hat D^0$ on $\{\Re \lambda \ge 0\}$ 
together with a certain sign condition on $\hat D(0)$ 
is sufficient for reduced Evans stability 
for $v_+>0$ sufficiently small.\footnote{
$D$, $\hat D$ are real-valued for real $\lambda$ by construction, 
so that $\sgn \hat D(0)$ is well-defined.}
This sign condition is implied in particular by
nonvanishing of $\hat D(0)$ on the range
\be\label{signcond}
\sqrt{\mu_0}\le B_1^* \le  \sqrt{\mu_0} + \max\Big\{ \sqrt{ \frac{\mu_0}{2}} ,
\sqrt{\frac{1}{2\sigma}} \, \Big\} . 
\ee
\et

\begin{proof}
Convergence follows by Proposition \ref{limconv};
for stability criteria, see Section \ref{largepf}.
\end{proof}

\br\label{signrmk}
The theoretically cumbersome condition \eqref{signcond}
is in practice no restriction, since we check in any case the
stronger condition of nonvanishing of
$\hat D$ for $\Re \lambda\ge 0$ on the entire range 
$\sqrt{\mu_0}\le B_1^*\le
\sqrt{\mu_0} + \max\Big\{ \sqrt{ \frac{\mu_0}{2}} ,
\sqrt{\frac{1}{2\sigma}} \, \Big\} $.
\er

\br\label{tr}
Recall \cite{HLZ} that the associated gas-dynamical
shock has already been shown to be Evans stable for $v_+>0$ sufficiently small.
Thus, not only reduced Evans stability, but full Evans stability, 
is implied for $v_+>0$ sufficiently small
by stability of the limiting function $\check D^0$ (resp. $\hat D^0$). 
\er

\noindent{\bf Large- and small-parameter limits.}
\bt\label{largeB}
For $\sigma$, $\mu_0$ bounded and bounded from zero, 
and $v_+$ bounded from zero,
parallel shocks are reduced Evans stable in the limit as
$B_1^*\to \infty$ or $B_1^*\to 0$.
For $\lambda $ bounded and $\Re \lambda\ge 0$, the Evans function
converges as $B_1^*\to \infty$ to a constant.
\et

\bt\label{sigmu}
For $B_1^*$ bounded, and $v_+$ bounded from zero,
parallel shocks are reduced Evans stable in the limit as
$\sigma \to 0$ with $\mu_0$ bounded, $\mu_0\to 0$ with
$\sigma$ bounded.
In each case, the Evans function converges uniformly
to zero on compact subsets of $\{\Re \lambda \ge 0\}$,
with $C^{-1}\sqrt{\sigma}\le |D^\sigma(\lambda)| \le C\sqrt{\sigma}$
for $C\sigma \le |\lambda|\le C$
and
$C^{-1}\sqrt{\mu_0}\le |D^\sigma(\lambda)| \le C\sqrt{\mu_0}$
for $C\mu_0 \le |\lambda|\le C$.
\et

\bt\label{sigmu2}
For $B_1^*$ bounded, and $v_+$ and $\mu_0$ bounded from zero,
parallel shocks are reduced Evans stable in the limit as
$\sigma \mu_0\to \infty$.
For $\lambda $ bounded and $\Re \lambda\ge 0$, the Evans function $D$,
appropriately renormalized, converges
as $\sigma \mu_0\to \infty$ to the Evans function $\hat D$ 
for \eqref{infsigmares};
more precisely, $D\sim e^{c_0 \sigma \mu_0 + c_1 + c_2\lambda}\hat D$ for $c_j$ constant.
\et

\br\label{corners}
Except for certain ``corner points'' consisting of simultaneous
limits of $B_1^*\to \infty$ together with  $\sigma \to 0$, 
or $\sigma \mu_0\to \infty$
together with $v_+\to 0$ or $\mu_0\to 0$, 
our analytic results verify stability on all but a (large but)
compact set of parameters.
We conjecture that stability holds in these
limits as well; this would be an interesting question for
further investigation.

As pointed out in \cite{HLyZ1}, the limit $v_+\to 0$ is connected
with the isentropic approximation, and does not occur for full
(nonisentropic) MHD for gas constant $\gamma>1$; 
thus, a somewhat more comprehensive analysis
is possible in that case.
Note that the reduced eigenvalue equations are identical in the
nonisentropic case \cite{FT}, except with $\hat v$ replaced 
by a full (nonisentropic) gas-dynamical profile, 
from which observation the reader may check that
all of the analytical results of this paper goes through unchanged
in the nonisentropic case, since the analysis depends only on
$\hat v$, and this only through properties of monotone decrease,
$|v_x|\le C|v|$ (immediate for $v$ bounded from zero),
and uniform exponential convergence as $x\to \pm\infty$,
that are common to both the isentropic and nonisentropic ideal gas cases.
\er

\noindent{\bf Discussion.}
Taken together, and along with the previous theoretical and numerical
investigations of \cite{HLZ} on stability of
gas-dynamical shocks, our asymptotic stability results reduce the study
of stability of parallel MHD shocks, 
in accordance with the general philosophy set out in \cite{HLZ,HLyZ1,HLyZ2},
mainly (i.e., with the exception of ``corner points'' discussed
in Remark \ref{corners}) to investigation of the continuous and numerically
well-conditioned renormalized functions $\check D$ and $\hat D$
on a compact parameter-range suitable for discretization, 
together with investigation of the similarly well-conditioned limiting
functions $\check D^0$ and $\hat D^0$.
However, notice that the same results show that the
unrenormalized Evans function $D$ blows up as $\lambda\to 0$,
both in the large-amplitude limit $v_+\to 0$ and in the
characteristic limits $B_1^*\to \sqrt{\mu_0v_\pm}$, hence
is {\it not} suitable for numerical testing across the entire
parameter range.  Indeed, in practice these
singularities dominate behavior even rather far from 
the actual blow-up points, making numerical investigation 
infeasibly expensive if renormalization is not carried out, even
for intermediate values of parameters/frequencies.
This is a substantial difference between the current and previous
analyses, and represents the main new difficulty that we have
overcome in the present work.


\subsection{Numerical stability results}
\label{sec:numerical}

For a given amplitude, the above analytical results truncate the computational domain to a compact set, thus allowing for a comprehensive numerical Evans function study patterned after \cite{HLZ,HLyZ1}, which yields Evans stability in the intermediate parameter range.  We then demonstrate Evans stability in the large-amplitude limit by (i) verifying convergence to the limiting Evans functions given in Theorem \ref{largeamp} (i.e., checking that convergence has occurred to desired tolerance at the limits of values $v_+$, $\lambda$ considered), and (ii) verifying nonvanishing on $\Re \lambda\ge 0$ of the limiting functions $\hat D^0$, $\check D^0$.  These computational results, together with the analytical results in Section \ref{sec:analytical}, give unconditional stability for all values except for cases where two or more parameters blow up simultaneously as described in Remark \ref{corners}.  The numerical computations were performed by the authors' STABLAB package, which is written in MATLAB, and has been used successfully for several systems \cite{BHRZ,HLZ,HLyZ1,CHNZ,Hu3,HLyZ2}.

When compared to the numerical study for isentropic Navier-Stokes \cite{BHRZ,HLZ}, this present system is better conditioned, yet much more computationally taxing since there are more free parameters to cover, i.e., $(\gamma,v_+,B^*_1, \mu_0,\sigma)$; the isentropic model by contrast has only two parameters $(\gamma,v_+)$.  Since each dimension adds, roughly, an order of magnitude to the runtime, we upgraded our STABLAB package to allow for parallel computation via MATLAB's parallel computing toolbox.  In our main study, we computed along $30,\!870$ semi-circular contours corresponding to the parameter values
\[
(\gamma,v_+,B^*_1, \mu_0,\sigma) \in [1.0,3.0] \times [10^{-5},0.8] \times [0.2,3.8] \times [0.2,3.8] \times[0.2,3.8].
\]
In every case, the winding number was zero, thus demonstrating Evans stability; see Section \ref{numerics} for more details.

We also carried out a number of small studies to illustrate our analytical work in the limiting fixed-amplitude cases.  These are briefly described below and are also given more detail in Section \ref{numerics}.

\begin{figure}[t]
\begin{center}
$\begin{array}{lr}
\includegraphics[width=7.5cm]{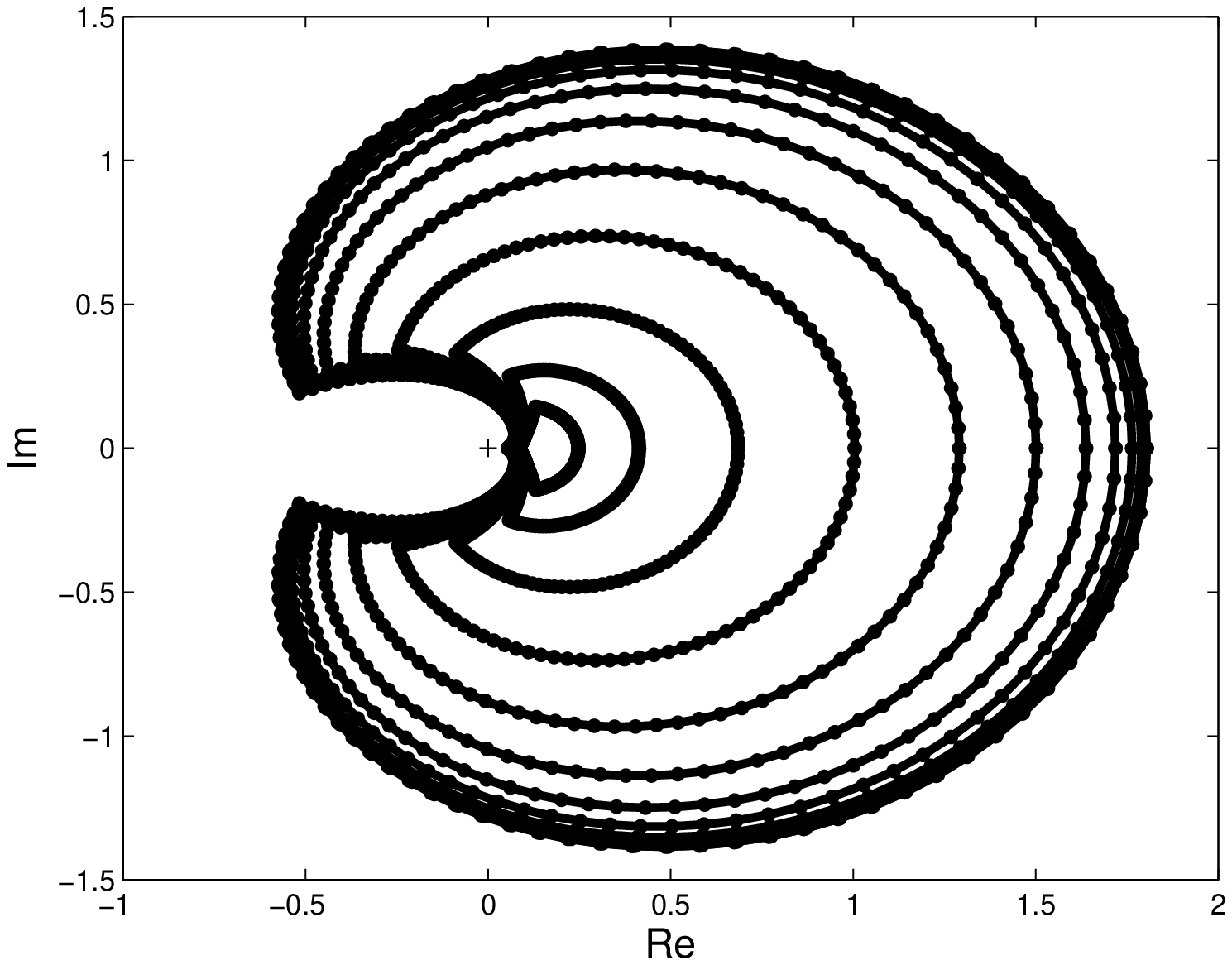} & \includegraphics[width=7.5cm]{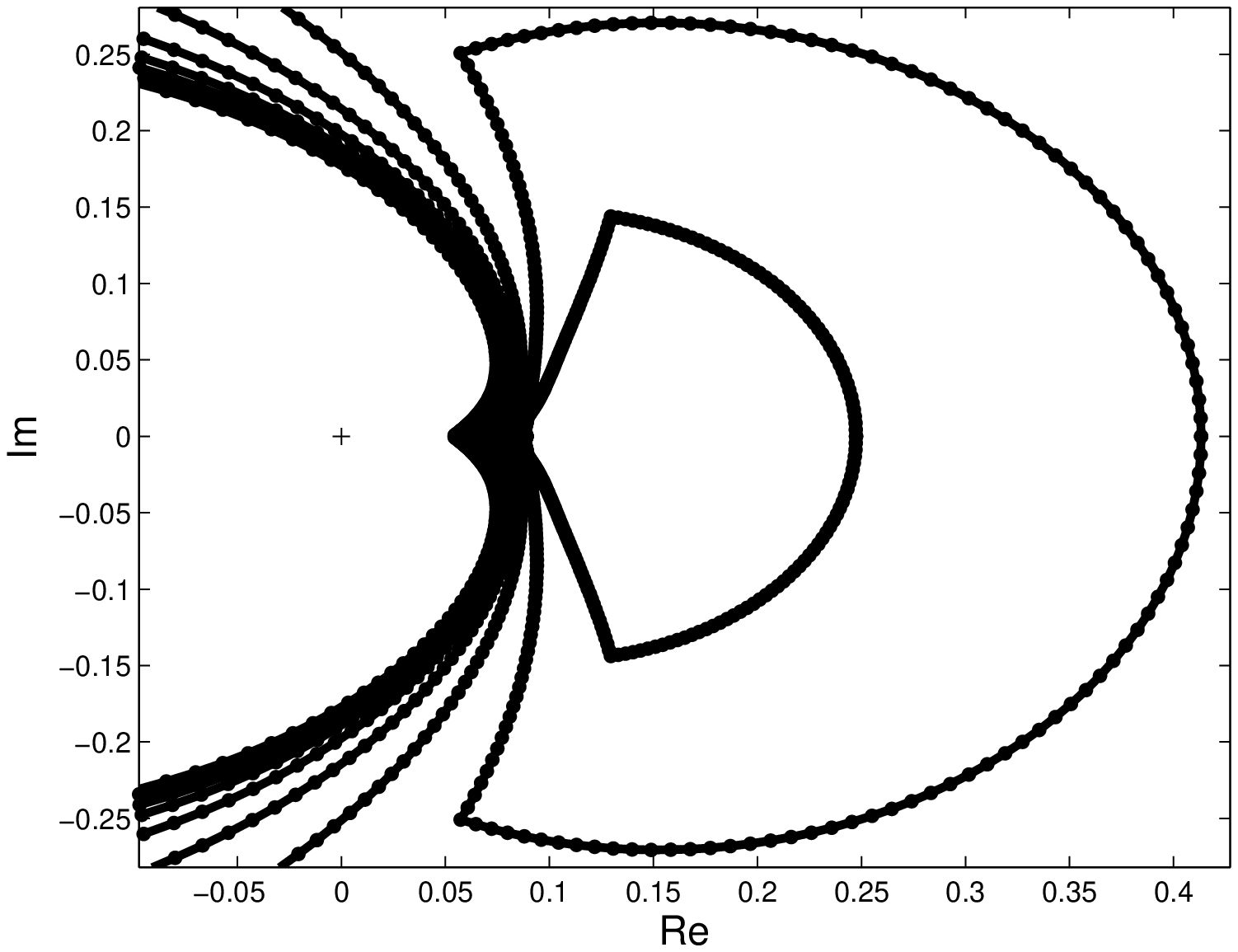}
\end{array}$
\end{center}
\caption{Renormalized Evans function output for semi-circular contour of radius $4.5$ (left) as the amplitude varies.  Parameters are $B^*_1=2$, $\mu_0=1$, $\sigma=1$, $\gamma = 5/3$, with $v_+= 10^{-1}, 10^{-1.5}, 10^{-2}, 10^{-2.5}, 10^{-3}, 10^{-3.5}, 10^{-4}, 10^{-4.5}, 10^{-5}, 10^{-5.5}, 10^{-6}$.  Note the striking concentric structure of the contours, which converge to the outer contour in the large-amplitude limit (i.e., $v_+\rightarrow 0$) and to a non-zero constant in the small-amplitude limit (i.e., $v_+\rightarrow 1$), indicating stability for all shock strengths since the winding numbers throughout are all zero.  The limiting contour given by $\hat D^0(\lambda)$ is also displayed, but is essentially identical to nearby contours.  When the image is zoomed in near the origin (right), which is marked by a crosshair, we see that the curves are well behaved and distinct from the origin.  Also clearly visible is the theoretically predicted square-root singularity at the origin of the limiting contour, as indicated by a right angle in the curve at the image of the origin on the real axis.}
\label{vplimit}
\end{figure}

In Figure \ref{vplimit}, we see the typical {\em concentric} structure as $v_+$ varies on $[0,1]$.  Note that in the strong-shock limit, the output converges to the outer contour representing the Evans function output of the limiting system.  In the small-amplitude limit, the system converges to a non-zero constant.  Since the origin is outside of the contours, one can visually verify that the winding number is zero thus implying Evans stability, even in the strong-shock limit.

In Figure \ref{BtoINfty}, we illustrate the convergence of the Evans function as $B^*_1\rightarrow \infty$.  Note that the contours converge to zero, but they are stable for all finite values of $B^*_1$.  Stability is proven analytically in Theorem \ref{largeB} by a tracking argument.  Prior to this computation, however, a significant effort was made to prove stability with energy estimates, but these efforts were in vain since the Evans function converges to zero as $B^*_1\rightarrow\infty$.

In Figure \ref{musmall}, we see the structure as $\mu_0\rightarrow 0$.  Once normalized (right), we see that the structure is essentially unchanged despite a large variation in $\mu_0$; in particular, the shock layers are stable in the $\mu_0\rightarrow 0$ limit.  This was proven analytically in Theorem \ref{sigmu}.

Finally, in Figure \ref{easy}, we see the behavior of the Evans function in the case that $r = \mu/(2\mu+\eta)\to \infty$.  This is the opposite case of that considered in \cite{FT}.  As we show in Proposition \ref{rinfty}, this case can be computed by disengaging the shooting algorithm and just taking the determinant of initializing e-bases at $\pm \infty$.  
Notice that in this limit the shock layers are also stable.

\begin{figure}[t]
\begin{center}
$\begin{array}{cc}
\includegraphics[width=7.5cm]{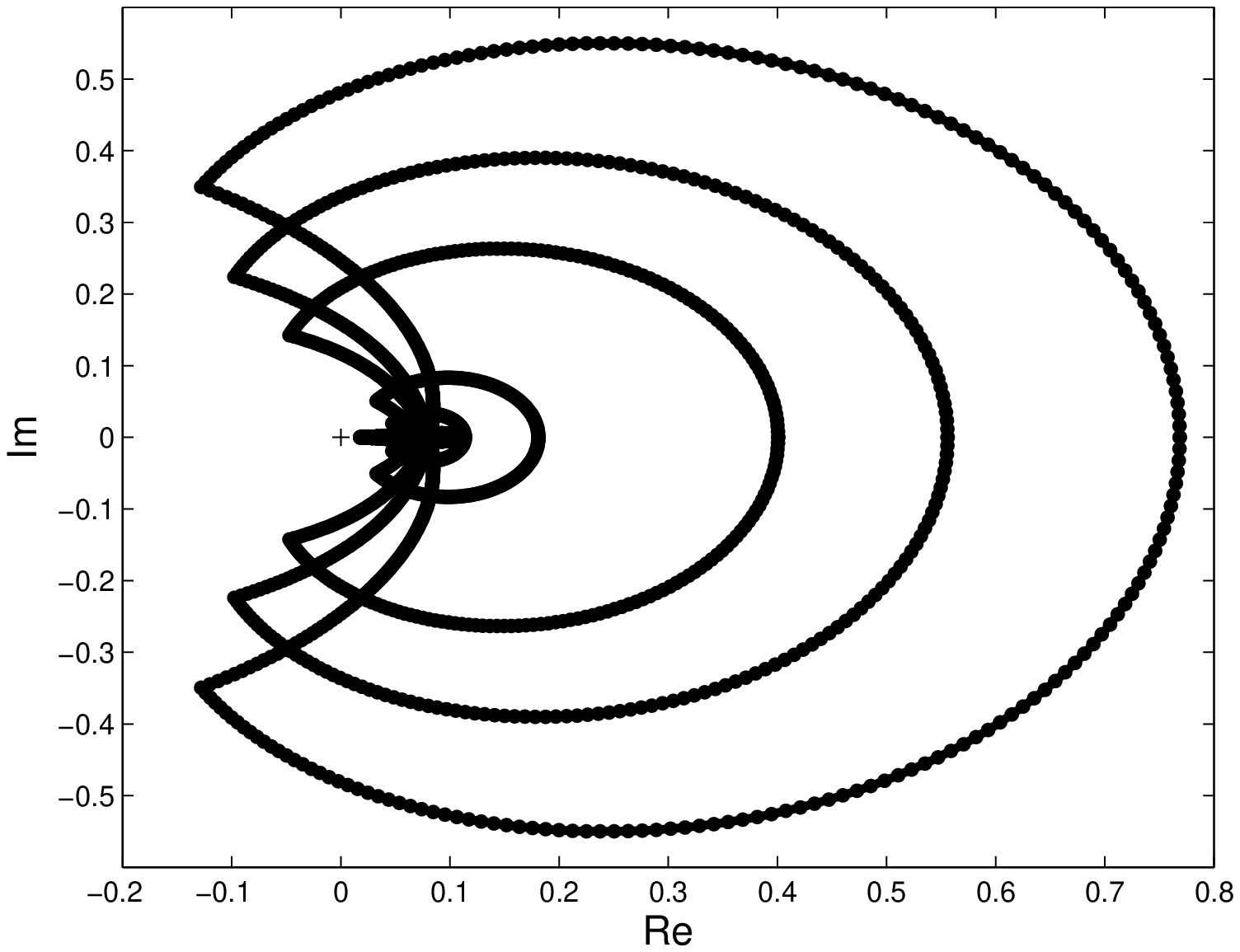} & \includegraphics[width=7.5cm]{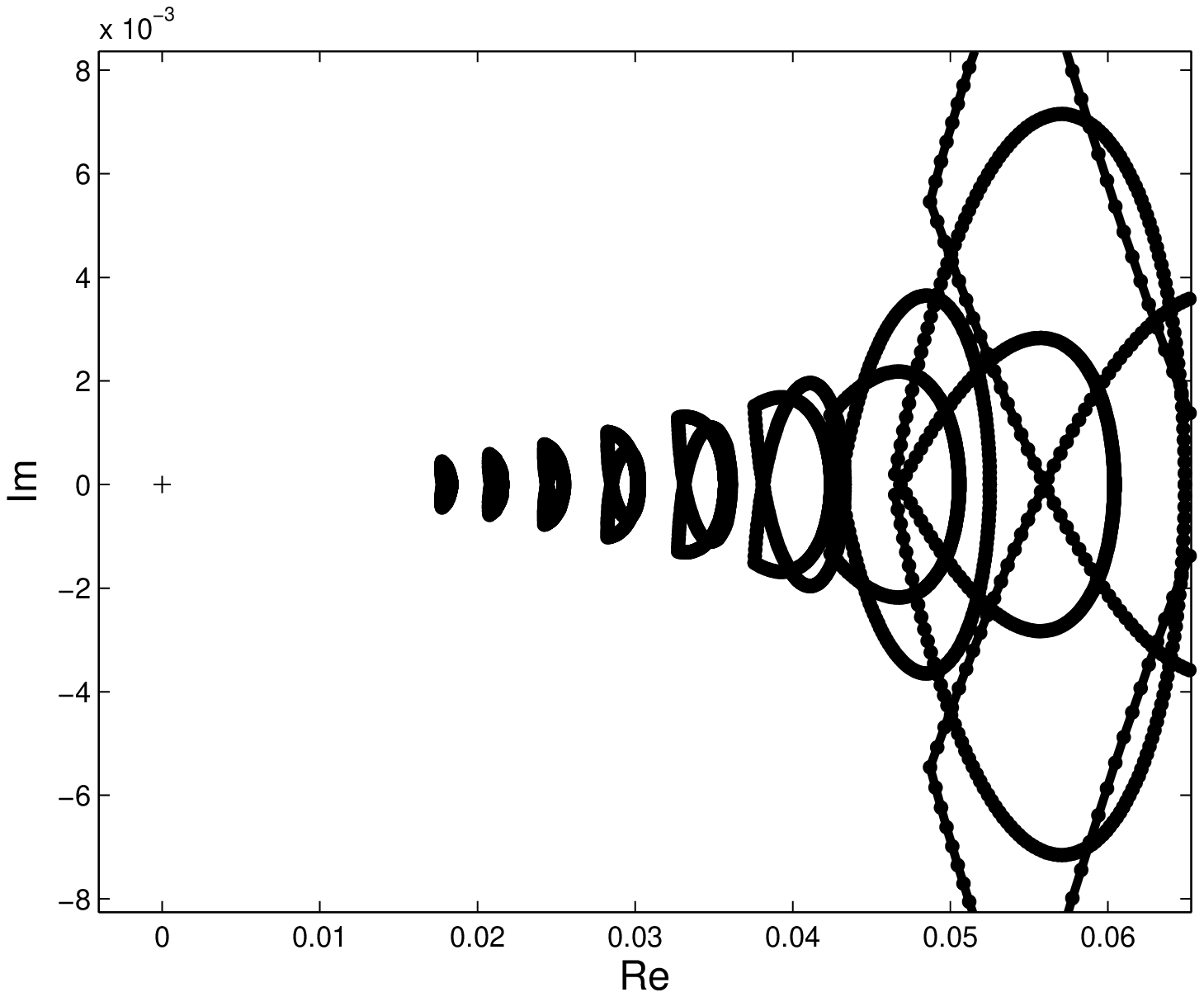}
\end{array}$
\end{center}
\caption{Evans function output for semi-circular contour of radius 5 (left) and a zoom-in of the same image near the origin (right).  Parameters are $v_+=10^{-2}$, $\mu_0=1$, $\sigma=1$, $\gamma = 5/3$, with $B^*_1= 2, 3.5, 5, 10, 15, 20, 25, 30, 35, 40$.  Note that the contours converge to zero, which is marked by a cross hair, as $B^*_1\rightarrow\infty$.}
\label{BtoINfty}
\end{figure}

\begin{figure}[t]
\begin{center}
$\begin{array}{cc}
\includegraphics[width=7.5cm]{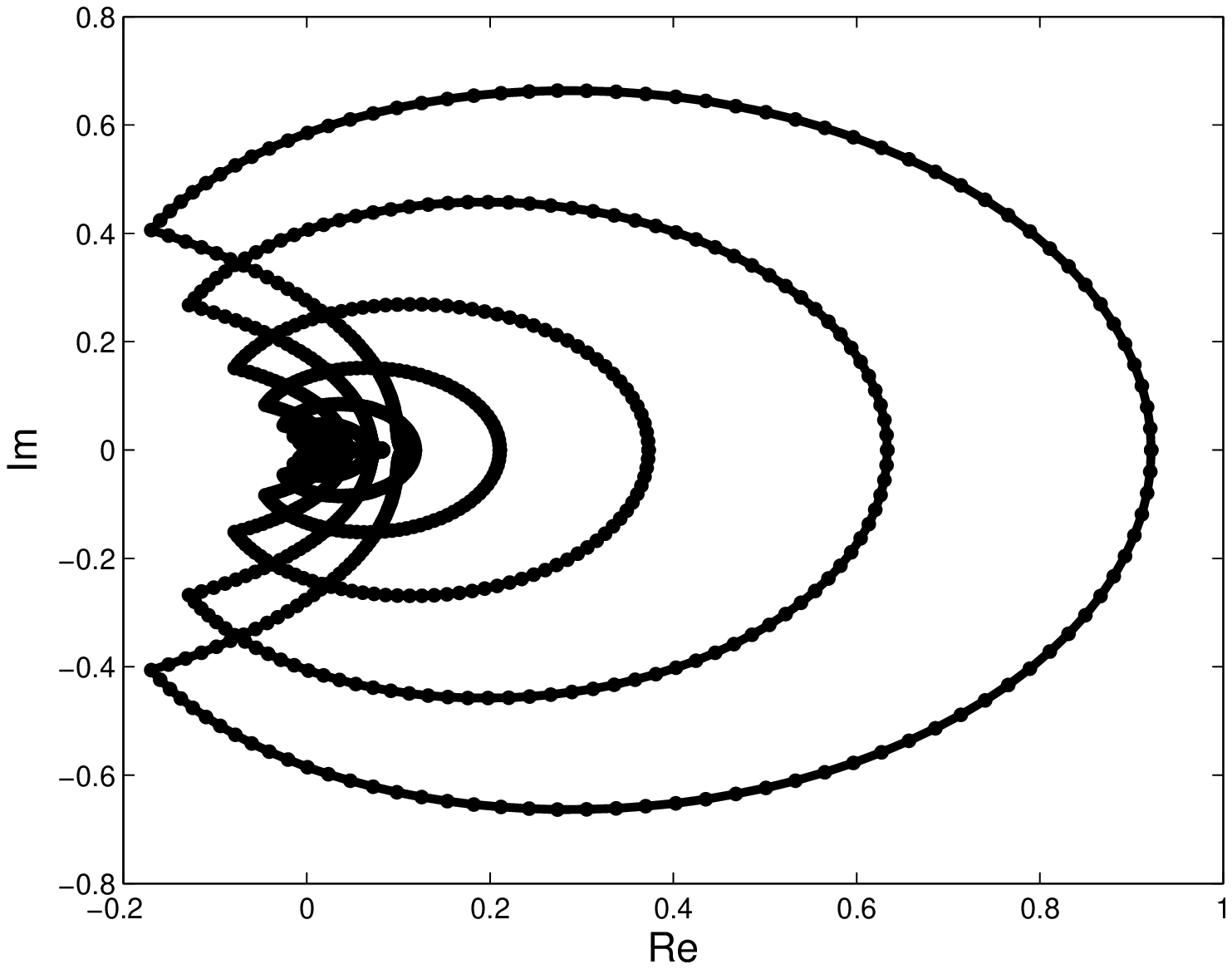} & \includegraphics[width=7.5cm]{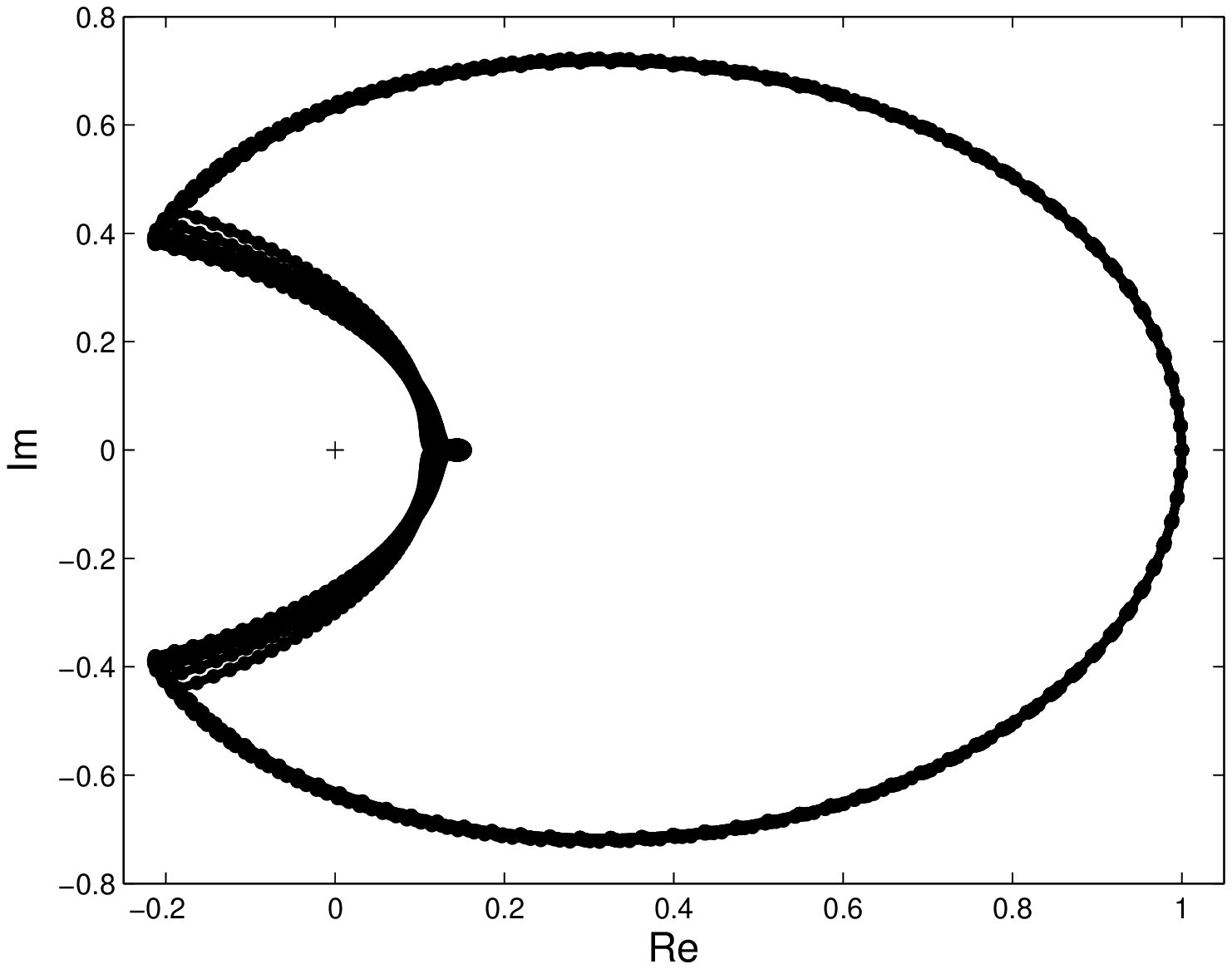}
\end{array}$
\end{center}
\caption{Evans function output for semi-circular contour of radius 5 (left) together with a renormalized version of the contours (right), where each contour is divided by its rightmost value, thus putting all contours through $z=1$ on the right side.  Although these results are typical, the parameters in this example are $B^*_1=2$, $v_+=10^{-2}$, $\sigma=1$, $\gamma = 5/3$, with $\mu_0=10^{-.5}, 10^{-1}, 10^{-1.5}, 10^{-2}, 10^{-2.5}, 10^{-3}, 10^{-3.5}, 10^{-4}, 10^{-4.5}, 10^{-5}$.  Note that the renormalized contours are nearly identical.  This provides a striking indication of stability for all values of $\mu_0$ in our range of consideration, and in particular for $\mu_0\rightarrow 0$.}
\label{musmall}
\end{figure}

\begin{figure}[t]
\begin{center}
$\begin{array}{cc}
\includegraphics[width=7.5cm]{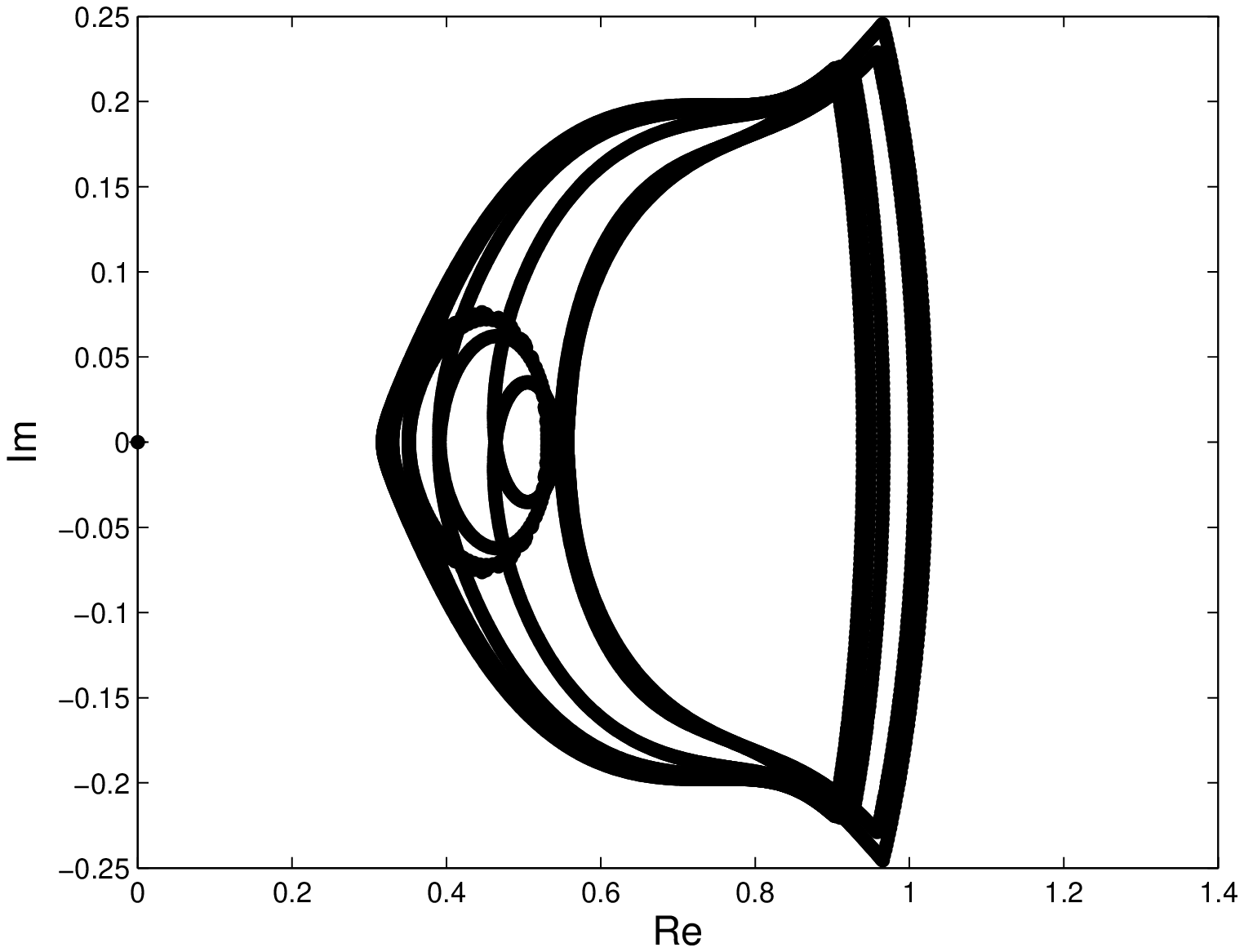} & \includegraphics[width=7.5cm]{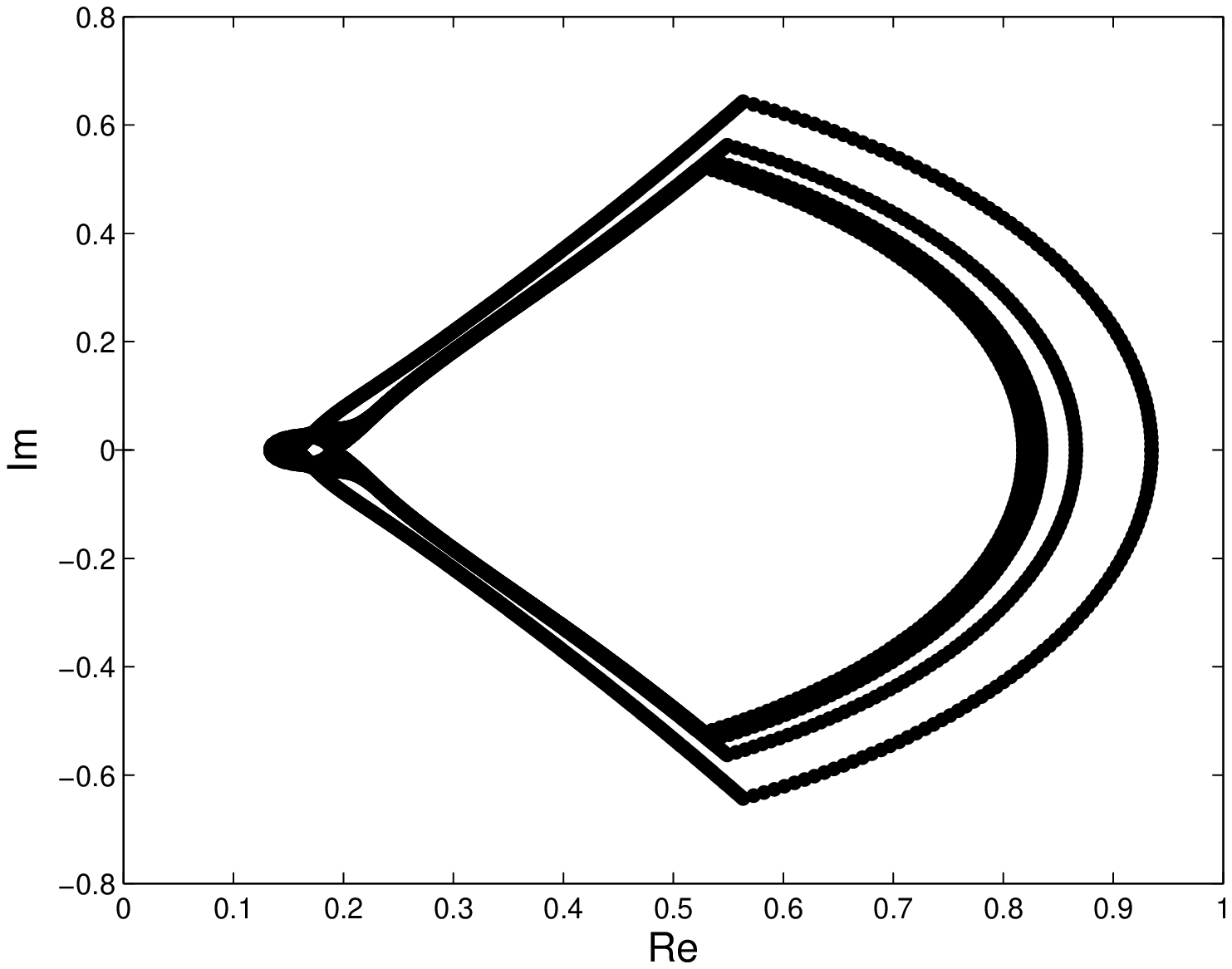}
\end{array}$
\end{center}
\caption{Renormalized Evans function in the $r=\infty$ case.  Parameters are $v_+=10^{-1}$, $10^{-2}$, $10^{-3}$, $10^{-4}$, $10^{-5}$, $10^{-6}$, $\mu_0=1$, $\sigma=1$, $\gamma=5/3$.  We also have a semi-circular radius of $4.5$ with $B_1^*=2$ (left), and a semi-circular radius of $1$ with $B_1^*=0.5$ (right).
}
\label{easy}
\end{figure}


\subsection{Discussion and open problems}\label{sec:disc}
Our numerical and analytical investigations suggest strongly
(and in some cases rigorously prove) reduced Evans stability
of parallel ideal isentropic MHD shock layers, independent of amplitude,
viscosity and other transport parameters, or magnetic field,
for gas constant $\gamma \in [1,3]$,
indicating that they are stable whenever the associated 
gas-dynamical shock layer is stable.
Together with previous investigations of \cite{HLZ}
indicating unconditional stability of isentropic gas-dynamical shock layers
for $\gamma\in [1,3]$, this suggests unconditional stability
of parallel isentropic MHD shocks for gas constant $\gamma\in [1,3]$,
the first such comprehensive result for shock layers in MHD.

It is remarkable that, despite the complexity of solution structure
and shock types occurring as magnetic field and other parameters vary,
we are able to carry out a uniform numerical Evans function analysis
across almost (see Remark \ref{corners})
the entire parameter range: a testimony to the power of the
Evans function formulation.
Interesting aspects of the present analysis beyond what has been done
in the study of gas dynamical shocks in \cite{HLZ,HLyZ1} are
the presence of branch singularities on certain parameter boundaries,
necessitating renormalization of the Evans function to remove blow-up
singularities, and the essential use of winding number
computations on Riemann surfaces in order to establish stability
in the large-amplitude limit.
The latter possibility was suggested in \cite{GZ}
(see Remark 3, Section 2.1),  but to our knowledge has not up to now
been carried out.

We note that Freist\"uhler and Trakhinin \cite{FT} have previously
established spectral stability of parallel viscous MHD shocks 
using energy estimates in the regime 
$$
r:=\mu/(2\mu+ \eta) \ll 1,
$$
whenever $B_1^*< 2\sqrt{\mu_0 v_-}$
(translating their results to our setting $s=-1$), which includes
all $1$- and intermediate-shocks, and some slow shocks 
($B_1^*>\sqrt{\mu_0 v_-}$).
Recall that we have here followed the standard physical prescription
$\eta=-2\mu/3$, so that
 $
\mu/(2\mu+ \eta)=3/4,
$
outside the regime studied in \cite{FT}.
Thus, the two analyses are complementary.  It would be an interesting mathematical question to investigate stability for general ratios $\mu/(2\mu+ \eta)$.  See Section \ref{ratio} for further discussion of this issue.  Here we study only the limit $r\to \infty$ complementary to that studied by \cite{FT}, the case $r=3/4$ suggested by nonmagnetic gas dynamics, and the remaining cases in the $r\to 0$ limit left open in \cite{FT}.  Other $r$-values may be studied numerically, but were not checked.

Stability of general (not necessarily parallel) fast shocks in the 
small magnetic field limit has been established in \cite{GMWZ6}
by convergence of the Evans function to the gas-dynamical limit,
assuming that the limiting gas-dynamical shock is stable, as has
been numerically verified for ideal gas dynamics in \cite{HLZ,HLyZ1,HLyZ2}.
%
Stability of more general, non-gas-dynamical shocks with large
magnetic field, is a very interesting open question.
In particular, as noted in \cite{TZ}, one-dimensional instability,
by stability index considerations, would for an ideal gas equation
of state imply the interesting phenomenon of
Hopf bifurcation to time-periodic, or ``galloping'' behavior
at the transition to instability.
For analyses of the related inviscid stability problem, 
see, e.g., \cite{T,BT,MeZ2} and references therein.

Another interesting direction for further investigation would be
a corresponding comprehensive study of multi-dimensional stability of 
parallel MHD shock layers, as carried out for gas-dynamical
shocks in \cite{HLyZ2}. 
As pointed out in \cite{FT}, instability results
of \cite{BT,T} for the corresponding inviscid problem imply 
that parallel shock layers become multi-dimensionally unstable for
large enough magnetic field,
by the general result \cite{ZS,Z2} that inviscid stability
is necessary for viscous stability, so that in multi-dimensions
instability definitely occurs.
The question in this case is whether viscous effects can hasten
the onset of instability, that is, whether
viscous instability can occur in the presence of inviscid stability.

\section{The Evans function and its properties}\label{evans}

We begin by constructing carefully the Evans function associated
with reduced system \eqref{eval}, and recalling its basic properties
for our later analysis.

\subsection{The Evans system}\label{s:evanssys}

The reduced eigenvalue equations \eqref{eval}
may be written as a first-order system
\begin{equation}\label{1sys}
    \begin{pmatrix}
        w\\ \mu w'\\ \alpha \\ \frac{\alpha'}{\sigma\mu_0 \hat v}
    \end{pmatrix}'=
    \begin{pmatrix}
        0 & 1/\mu & 0 &0\\
        \lambda \hat v& \hat v/\mu & 0& -\sigma B_1^*\hat v \\
        0 & 0 &0 & \sigma \mu_0 \hat v\\
        0 & -B_1^*\hat v/\mu & \lambda \hat v & 
        \sigma \mu_0 \hat v^2
    \end{pmatrix}
    \begin{pmatrix}
        w\\ \mu w'\\ \alpha \\ \frac{\alpha'}{\sigma\mu_0 \hat v}
    \end{pmatrix},
\end{equation}
or
\be
\label{stdsys}
W'=A(x,\lambda) W,
\ee
indexed by the three parameters $B_1^*$, $\sigma$, and
$\tilde \mu_0:= \sigma \mu_0$. Recall that we have already fixed
$ \mu=1$ and $(2\mu+\eta)= 4/3$.

\subsection{Limiting subspaces}\label{shocktype}
Denote by
\begin{equation}\label{Apm}
A_\pm(\lambda) :=\lim_{x\to \pm \infty} A(x,\lambda)=
\begin{pmatrix}
        0 & 1/\mu & 0 &0\\
        \lambda v_\pm& v_\pm /\mu & 0& -\sigma B_1^* v_\pm \\
        0 & 0 &0 & \sigma \mu_0 v_\pm\\
        0 & -B_1^* v_\pm/\mu & \lambda v_\pm & 
\sigma \mu_0 v_\pm^2 
    \end{pmatrix}
\end{equation}
the limiting coefficient matrices associated with \eqref{1sys}-\eqref{stdsys}.

\bl\label{limlem1}
For $\Re \lambda \ge 0$, $\lambda\ne 0$, and $1\ge v_+>0$,
each of $A_\pm$ has two eigenvalues with strictly
positive real part and two eigenvalues with strictly
negative real part, hence their stable and unstable
subspaces $S_\pm$ and $U_\pm$ vary smoothly in all
parameters and analytically in $\lambda$.
For $v_+>0$ they extend continuously to $\lambda=0$,
and analytically everywhere except at
$B_1^*= \sqrt{\mu_0 v_\pm}$, 
where they depend smoothly on $\sqrt{(1-B_1^*/\mu_0v_\pm)^2 + 4\lambda c}$,
$
c:=  \frac{1}{2}\Big(\frac{\mu}{v} + \frac{1}{\sigma \mu_0 v^2}\Big)_\pm.
$
\el

\begin{proof}
By standard hyperbolic--parabolic theory 
(e.g., Lemma 2.21, \cite{Z2}), $A_\pm$ 
have no pure imaginary eigenvalues for $\Re \lambda\ge 0$,
$\lambda \ne 0$, for any $v_+>0$ and parameter values
$\sigma$, $\tilde \mu_0$, $B_1^*$, whence the numbers of
stable (negative real part) and unstable (positive real part)
eigenvalues are constant on this set.  By homotopy taking
$\lambda$ to positive real infinity, we find readily that
there must be two of each.

Alternatively, we may see this directly by looking at the corresponding
second-order symmetrizable hyperbolic--parabolic system
$$
\lambda \bp w \\ \alpha \ep
+
\bp
1& -B_1^*/\mu_0 v_\pm\\
-B_1^* & 1
\ep 
\bp w \\ \alpha \ep_x=
\bp
\mu/v_\pm & 0\\
0 & 1/\sigma \mu_0 v_\pm^2
\ep 
\bp w \\ \alpha \ep_{xx},
$$
and applying the standard theory here, specifically,
noting that eigenvalues consist of solutions $\mu$ of
\be\label{evalcrit}
\lambda \in 
\sigma \Big(
-\mu
\bp
1& -B_1^*/\mu_0 v_\pm\\
-B_1^* & 1
\ep 
+ \mu^2
\bp
\mu/v_\pm & 0\\
0 & 1/\sigma \mu_0 v_\pm^2
\ep 
\Big),
\ee
so that $\mu=ik$ by a straightforward energy
estimate yields $\Re \lambda \le -\theta k^2$
for $\theta>0$.

Applying to reduced system \eqref{evalcrit}
Lemma 6.1 \cite{MaZ3} or Proposition 2.1, \cite{ZH},
we find further that, whenever the convection matrices
\be\label{betapm}
\beta_\pm:=
\bp
1& -B_1^*/\mu_0 v_\pm\\
-B_1^* & 1
\ep
\ee
are noncharacteristic in the sense that their
eigenvalues $\alpha= 1\pm \frac{B_1^*}{\sqrt{\mu_0 v_\pm}}$
are nonzero, these subspaces extend analytically to $\lambda=0$.

Finally, we consider the degenerate case that $B_1^*=\sqrt{\mu_0v_\pm}$
and the convection matrix $\beta$ is characteristic.
Considering \eqref{evalcrit} as determining $\lambda/\mu$ 
as a function of $\mu$ for $\mu$ small, we obtain, diagonalizing
$B$ and applying standard matrix perturbation theory \cite{Kato}
that in the nonzero eigendirection $r_j$ of $\beta$, associated
with eigenvalue $\beta_j\ne 0$,  $\lambda/\mu \sim \beta_j$,
and, inverting, we find that $\mu_j(\lambda)$ extends analytically
to $\lambda=0$.
In the zero eigendirection, on the other hand, associated with
left and right eigenvectors 
$l=(1/2, 1/2B_1^*)^T$ 
and $r=(B, 1)^T$, 
we find that 
$$
\lambda \sim -\mu (1-B_1^*/\sqrt{\mu_0 v_\pm}) + c\mu^2+ \dots, 
$$
where
$$
c=(l^T\beta r)_\pm=
\frac{1}{2}\Big(\frac{\mu}{v} + \frac{1}{\sigma \mu_0 v^2}\Big)_\pm
\ne 0,
$$
leading after inversion to the claimed square-root singularity.
Likewise, the stable eigendirections of $A_\pm$ associated with $\mu$ vary continuously
with $\lambda$, converging for $B_1^*=\sqrt{\mu_0 v_\pm}$ and $\lambda=0$
to 
$
(w , \alpha, w', \alpha')^T= \bp r\\0\ep,
$
where $r$ is the zero eigendirection of $\beta$.
This accounts for three eigenvalues $\mu$ lying near zero,
bifurcating from the three-dimensional kernel of $A_\pm$
near a degenerate, characteristic, value of $B_1^*$.
The fourth eigenvalue is far from zero and so varies analytically
in all parameters about $\lambda=0$.
\end{proof}

\br\label{contrmk}
Remarkably, even though the shock changes type upon passage through
the points $B_1^*=\sqrt{\mu_0 v_\pm}$, the stable and unstable subspaces of
$A_\pm$ vary {\rm continuously}, with stable and unstable eigendirections
coalescing in the characteristic mode.
\er

\subsection{Limiting eigenbases and Kato's ODE}

Denote by $\Pi_+$ and $\Pi_-$ the eigenprojections of
$A_+$ onto its stable subspace and $A_-$ onto its unstable
subspace, with $A_\pm$ defined as in \eqref{Apm}.
By Lemma \ref{limlem1}, these are analytic in $\lambda$
for $\Re \lambda \ge 0$, $v_+>0$, except for square-root
singularities at $\lambda=0$ for $B_1^*=\sqrt{\mu_0v_\pm}$.
Introduce the complex ODE \cite{Kato}
\be\label{Kato}
R'=\Pi'R, \quad R(\lambda_0)=R_0,
\ee
where $'$ denotes $d/d\lambda$, $\lambda_0$ is fixed
with $\Re \lambda_0>0$, $\Pi=\Pi_\pm$, 
and $R$ is a $4\times 2$ complex matrix.
By a partition of unity argument \cite{Kato}, there exists a
choice of initializing matrices $R_0$ that is smooth in the
suppressed parameters $v_+$, $B_1^*$, $\sigma$, and $\mu_0$,
is full rank, and satisfies $\Pi(\lambda_0) R_0=R_0$; that is,
its columns are a basis for the stable (resp. unstable) 
subspace of $A_+$ (resp. $A_-$).

\bl[\cite{Kato,Z4}]\label{katolem}
There exists a global solution
$R$ of \eqref{Kato} on $\{\Re \lambda \ge 0\}$, 
analytic in $\lambda$ and smooth in parameters $v_+>0$, $B_1^* \ge 0$,
$\sigma>0$, and $\mu_0>0$
except at the singular values $\lambda=0$, $B_1^*=\sqrt{\mu_0 v_\pm}$, 
such that (i) $\rank R \equiv \rank R^0$,
(ii) $\Pi R\equiv R$, and (iii) $\Pi R'\equiv 0$.
\el

\begin{proof}
As a linear ODE with analytic coefficients, \eqref{Kato}
possesses an analytic solution in a neighborhood of $\lambda_0$,
that may be extended globally along any curve, whence, by
the principle of analytic continuation, it possesses a global
analytic solution on any simply connected domain containing $\lambda_0$
\cite{Kato}.
Property (i) follows likewise
by the fact that $R$ satisfies a linear ODE.
Differentiating the identity $\Pi^2=\Pi$ following \cite{Kato} 
yields $\Pi\Pi'+\Pi'\Pi=\Pi'$, whence, multiplying on the right by $\Pi$,
we find the key property
\begin{equation}\label{Pprop}
\Pi\Pi'\Pi=0.
\end{equation}
From \eqref{Pprop}, we obtain
$$
\begin{aligned}
(\Pi R-R)'&=(\Pi'R+\Pi R'-R')
= \Pi'R+(\Pi-I) \Pi'R
= \Pi \Pi'R,\\
\end{aligned}
$$
which, by $\Pi\Pi'\Pi=0$ and $\Pi^2=\Pi$ gives
$$
\begin{aligned}
(\Pi R-R)'&= -\Pi\Pi'(\Pi R-R), \quad (\Pi R-R)(\lambda_0)=0,
\end{aligned}
$$
from which (ii) follows by uniqueness of solutions
of linear ODE.
Expanding $\Pi R'=\Pi\Pi'R$ and using
$\Pi R=R$ and $\Pi\Pi'\Pi=0$, we obtain $\Pi R'=\Pi\Pi'\Pi R=0$, verifying (iii).
\end{proof}

\br
Property (iii) indicates that the Kato basis is an
optimal choice in the sense that it involves minimal variation
in $R$.
It is useful also for computing the Kato basis in different ways
\cite{HSZ,BDG}; see Appendix \ref{branch}.
\er

\subsection{Characteristic values: the regularized Kato basis}
We next investigate the behavior of the Kato basis near $\lambda=0$ 
and the degenerate points $B_1^*=\sqrt{\mu_0v_\pm}$ at 
which the reduced convection matrix $\beta_\pm$ 
of \eqref{betapm} becomes characteristic
in a single eigendirection.

\begin{example}\label{sqrteg}
A model for this situation is the eigenvalue equation for a
scalar convected heat equation
$\lambda u + \eta u'=u''$ with
convection coefficient $\eta$ passing through zero.
The coefficient matrix for the associated first-order system 
is 
\be\label{egA}
A:=\bp 0 & 1 \\ \lambda & \eta\ep.
\ee
As computed in Appendix \ref{branch},
the stable eigenvector of $A$ determined by Kato's ODE \eqref{Kato} is
\be\label{charkato}
R(\eta, \lambda):=
\frac{ (\eta^2/4+ 1)^{1/4}} { (\eta^2/4+ \lambda)^{1/4}}
\Big(1, -\eta/2 - \sqrt{\eta^2/4 + \lambda}\, \Big)^T,
\ee
which, apart from the divergent factor 
$\frac{ (\eta^2/4+ 1)^{1/4}} { (\eta^2/4+ \lambda)^{1/4}}$,
is a smooth function of $ \sqrt{\eta^2/4 + \lambda}$.
\end{example}

The computation of Example \eqref{sqrteg} indicates that the
Kato basis blows up at $\lambda=0$ as
$\big( (1-B_1^*/\sqrt{\mu_0 v_\pm})^2+4\lambda\big)^{-1/4}$ as
$B_1^*$ crosses characteristic points $\sqrt{\mu_0v_\pm}$
across which the shock changes type, hence does not give a choice
that is continuous across the entire range of shock profiles.
However, the same example shows that there is a different choice
$(1, -\eta/2 - \sqrt{\eta^2/4 + \lambda}\, )^T$
that {\it is} continuous, possessing only a square-root singularity.
We can effectively exchange one for another, by rescaling the
Kato basis as we now describe.

Following \cite{AGJ,GZ}, associate with bases
$R_\pm=(R_1,R_2)^\pm$ the wedge (i.e., exterior algebraic) product 
$\cR_\pm:=(R_1\wedge R_2)^\pm$.

\bl\label{rkatolem}
The ``regularized Kato products''
\be\label{rkato+}
\tilde \cR_+:=
\frac{ \Big( (1-B_1^*/\sqrt{\mu_0 v_+})^2 + 4\lambda 
 (\mu/2v_+ + 1/2\sigma \mu_0 v_+^2)
\Big)^{1/4}} 
{ \Big( (1-B_1^*/\sqrt{\mu_0 v_+})^2 + 4
 (\mu/2v_+ + 1/2\sigma \mu_0 v_+^2)
\Big)^{1/4}} 
(R_1^+\wedge R_2^+)
\ee
and
\be\label{rkato-}
\tilde \cR_-:=
\frac{ \Big( (1-B_1^*/\sqrt{\mu_0 })^2 + 4\lambda 
(\mu/2 + 1/2\sigma \mu_0 )
\Big)^{1/4}} 
{ \Big( (1-B_1^*/\sqrt{\mu_0 })^2 + 4 
(\mu/2 + 1/2\sigma \mu_0 )
\Big)^{1/4}} 
(R_1^+\wedge R_2^+)
\ee
are analytic in $\lambda$ and smooth in remaining parameters
on all of $\lambda \ge 0$, $v_+>0$, $B_1^*\ge 0$,
$\sigma>0$, $\mu_0>0$ except the points $\lambda=0$, $B_1^*=\sqrt{\mu_0v_\pm}$,
where they are continuous with a square-root singularity,
depending smoothly on $\sqrt{ (1-B_1^*/\sqrt{\mu_0 v_\pm})^2+4\lambda}$.
Moreover, they are bounded from zero (full rank) on the entire
parameter range.
\el

\begin{proof}
A computation like that of Example \ref{sqrteg} 
applied to system \eqref{Apm},
replacing $\eta$ with the characteristic speed 
$1-B_1^*/\sqrt{\mu_0 v_\pm}$ and introducing a diffusion coefficient
$c_\pm=  \frac{1}{2}\Big(\frac{\mu}{v} + \frac{1}{\sigma \mu_0 v^2}\Big)_\pm$,
i.e., considering 
\be\label{approxeq}
\lambda u+ \eta u'=cu'',
\ee
shows that, for an appropriate choice of initializing
basis $R_0$ in \eqref{Kato}, 
there is blowup as $(\lambda, B_1^*)\to (0, \sqrt{\mu_0v_\pm})$
at rate
$$
\Big((1-B_1^*/\sqrt{\mu_0 v_\pm})^2+4\lambda c_\pm)\Big)^{-1/4}
$$
in a basis vector involving the characteristic mode,
while the second basis vector remains bounded and analytic,
whence the result follows.
For the derivation of approximate equation \eqref{approxeq},
see the proof of Lemma \ref{limlem1}.
\end{proof}

\br\label{val}
A review of the argument shows that estimate \eqref{rkato+}
derived for fixed $v_+$ remains valid so long
as $|\lambda |\le C|\mu| \ll v_+^2$.
Different asymptotics hold for $v_+\le C\sqrt{|\lambda|}$;
see Section \ref{lambig}.
\er

\br\label{blowup}
Lemma \ref{rkatolem} (by uniform full rank) 
includes the information that the unregularized
Kato bases blow up at rate $\lambda^{-1/4}$ at $B_1^*=\sqrt{\mu_0 v_\pm}$.
\er

\subsection{Conjugation to constant-coefficients}

We now recall the {\it conjugation lemma} of \cite{MeZ1}.
Consider a general first-order system
\be \label{gen1}
W'=A(x,\lambda,p)W
\ee
with asymptotic limits $A_\pm$ as $x\to \pm \infty$,
where $p\in \RR^m$ denotes up-to-now-supressed model parameters.
\bl [\cite{MeZ1,PZ}]\label{conjlem}
Suppose for fixed $\theta>0$ and $C>0$ that 
\be\label{udecay}
|A-A_\pm|(x,\lambda,p)\le Ce^{-\theta |x|}
\ee
for $x\gtrless 0$ uniformly for $(\lambda,p)$ in a neighborhood of 
$(\lambda_0,p_0)$ and that $A$ varies analytically in $\lambda$ 
and smoothly (resp. continuously) in $p$ 
as a function into $L^\infty(x)$.
Then, there exist in a neighborhood of $(\lambda_0,p_0)$
invertible linear transformations $P_+(x,\lambda,p)=I+\Theta_+(x,\lambda,p)$ 
and $P_-(x,\lambda,p) =I+\Theta_-(x,\lambda,p)$ defined
on $x\ge 0$ and $x\le 0$, respectively,
analytic in $\lambda$ and smooth (resp. continuous) in $p$ 
as functions into $L^\infty [0,\pm\infty)$, such that
\begin{equation}
\label{Pdecay} 
| \Theta_\pm |\le C_1 e^{-\bar \theta |x|}
\quad
\text{\rm for } x\gtrless 0,
\end{equation}
for any $0<\btheta<\theta$, some $C_1=C_1(\bar \theta, \theta)>0$,
and the change of coordinates $W=:P_\pm Z$ reduces \eqref{gen1} to 
\begin{equation}
\label{glimit}
Z'=A_\pm Z 
\quad
\text{\rm for } x\gtrless 0.
\end{equation}
\el

\begin{proof}
The conjugators $P_\pm$ are constructed by a
fixed point argument \cite{MeZ1}
as the solution of an integral equation
corresponding to the homological equation
\be\label{homolog}
P'=AP-A_\pm P.
\ee
The exponential decay \eqref{udecay} is needed to 
make the integral equation contractive in $L^\infty[M,+\infty)$ 
for $M$  sufficiently large.  Continuity of $P_\pm$ with respect 
to $p$ (resp. analyticity with respect to $\lambda$) then follow
by continuous (resp. analytic) dependence on parameters of 
fixed point solutions.
Here, we are using also the fact that \eqref{udecay} plus continuity
of $A$ from $p\to L^\infty$ together imply continuity of
$e^{\tilde \theta |x|}(A-A_\pm)$ from $p$ into $L^\infty[0,\pm\infty)$
for any $0<\tilde \theta < \theta$, in order to obtain the needed
continuity from $p\to L^\infty$ of the fixed point mapping.
See also \cite{PZ,GMWZ5}.
\end{proof}

\br\label{specialform}
In the special case that $A$ is block-diagonal or
-triangular, the conjugators $P_\pm$ may evidently be taken block-diagonal
or triangular as well, by carrying out the same fixed-point argument
on the invariant subspace of \eqref{homolog} consisting
of matrices with this special form.
This can be of use in problems with multiple scales; 
see, for example, the proof in Section \ref{further} of
Theorem \ref{sigmu} ($\sigma\to 0$).
\er

\subsection{Construction of the Evans function}\label{const}

\begin{definition} [\cite{MaZ3,Z2,Z3}] \label{evansdef}
The Evans function is defined on
\eqref{Kato} on $\Re \lambda \ge 0$, $v_+>0$, $B_1^* \ge 0$,
$\sigma>0$, $\mu_0>0$ as
\be\label{eq:evans}
\begin{aligned}
D(\lambda,p)&:=
\det( P^+R_1^+,P^+R_2^+, P^-R_1^-, P^-R_2^-)|_{x=0}\\
&=
 \langle P^+R_1^+ \wedge P^+R_2^+ \wedge P^-R_1^- 
\wedge P^-R_2^- |_{x=0} \rangle,
\end{aligned}
\ee
where $\langle \cdot \rangle$ of a full wedge product denotes its
coordinatization in the standard (single-element) basis
$e_1\wedge e_2\wedge e_3 \wedge e_4$, where $e_j$ are the standard Euclidean
basis elements in $\CC^4$.
\end{definition}

\begin{definition} \label{revansdef}
The regularized Evans function is defined as
\be \label{eq:revans}
\begin{aligned}
\tilde D(\lambda,p)&:=
\frac{ \Big( (1-B_1^*/\sqrt{\mu_0 })^2 + 4\lambda
 (\mu/2 + 1/2\sigma \mu_0 )
\Big)^{1/4}}
{ \Big( (1-B_1^*/\sqrt{\mu_0 })^2 + 4
 (\mu/2 + 1/2\sigma \mu_0 )
\Big)^{1/4}}
\\
&\quad \times
\frac{ \Big( (1-B_1^*/\sqrt{\mu_0 v_+})^2 + 4\lambda
 (\mu/2v_+ + 1/2\sigma \mu_0 v_+^2)
\Big)^{1/4}} 
 {\Big( (1-B_1^*/\sqrt{\mu_0 v_+})^2 + 4
 (\mu/2v_+ + 1/2\sigma \mu_0 v_+^2)
\Big)^{1/4}} 
D(\lambda,p)\\
&=
 \langle \cP_+\tilde R_+ \wedge \cP_-\tilde R_- |_{x=0} \rangle,
\end{aligned}
\ee
where $\cP_\pm (R_1 \wedge R_2):= P_\pm R_1 \wedge P_\pm R_2$
denotes the ``lifting'' to wedge product space
of conjugator $P_\pm$. 
\end{definition}

\bpr\label{evprops}
The Evans function $D$ is analytic in $\lambda$ and smooth 
in remaining parameters
on all of $\lambda \ge 0$, $v_+>0$, $B_1^*\ge 0$,
$\sigma>0$, $\mu_0>0$ except the points $\lambda=0$, $B_1^*=\sqrt{\mu_0v_\pm}$,
where it blows up as 
$$
\Big((1-B_1^*/\sqrt{\mu_0 v_\pm})^2+4\lambda
 (\mu/2v_\pm + 1/2\sigma \mu_0 v_\pm^2)
\Big)^{-1/4}.
$$
The regularized Evans function $\tilde D$
is analytic in $\lambda$ and smooth in remaining parameters on the same
domain, and continuous with a square-root singularity
at $(\lambda,B_1^*)=(0,\sqrt{\mu_0v_\pm})$,
depending smoothly on $\sqrt{ (1-B_1^*/\sqrt{\mu_0 v_\pm})^2+4\lambda}$.
\epr

\begin{proof}
Local existence/regularity is immediate, 
by Lemmas \ref{katolem}, \ref{rkatolem}, and \ref{conjlem},
Proposition \ref{profdecay}, and
Definitions \ref{evansdef}, \ref{revansdef}.
Global existence/regularity then follow \cite{MaZ3,PZ,Z2,Z3} by
the observation that the Evans function is independent of the
choice of conjugators $P_\pm$ (in general nonunique) on the region
where $A_\pm$ are hyperbolic (have no center subspace), in this
case $\{\Re \lambda\ge 0\}\setminus\{0\}$.\footnote{
In Evans function terminology, the ``region of consistent splitting'' 
\cite{AGJ,GZ,Z2,Z3}.
}
\end{proof}

\br
Evidently, for $B_1^*\ne \sqrt{\mu_0 v_\pm}$, Evans stability, defined
as nonvanishing of $D$ on $\Re \lambda \ge 0$ is 
equivalent to nonvanishing of the regularized Evans function
$\tilde D$ on $\Re \lambda \ge 0$.
On the other hand, $\tilde D$ is continuous throughout the physical
parameter range, making possible a numerical verification of nonvanishing,
even up to the characteristic points $B_1^*=\sqrt{\mu_0v_\pm}$.
\er

\br\label{altreg}
An alternative, simpler and more general regularization of the Evans function is 
\be
\label{eq:altreg}
\hat D(\lambda,p):=\frac{D(\lambda,p)}{(|\cP_+\cR_+||\cP_-\cR_-|)|_{x=0}}
\;,
\ee
where $|\cP_+\cR_+|$ and $|\cP_-\cR_-|$ denote norms of $(\cP\cR)_\pm$ in the 
standard basis $e_i\wedge e_j$, $i\ne j$.
Though not analytic, it is still $C^\infty$ wherever $D$ is analytic,
and its zeros agree in location and multiplicity with those of $D$,
and is continuous wherever the stable (unstable) subspaces of $A_+$
($A_-$) are continuous.
Indeed, it is somewhat more faithful than the usual Evans function
to the original idea \cite{AGJ} of a quantity measuring
the angle between subspaces.
The disadvantage of this regularization is that it eliminates 
structure (analyticity, asymptotic behavior) that has proved quite
useful both in verifying code by benchmarks, and in interpreting
behavior/trends \cite{HLZ,CHNZ,HLyZ1,HLyZ2}.
\er

\bpr
On $\{\Re \lambda\ge 0\}\setminus\{0\}$, the zeros of $D$ (resp. $\tilde D$)
agree in location and multiplicity with eigenvalues of $L$.
\epr

\begin{proof}
Agreement in location-- the part that concerns us here--
is an immediate consequence of the construction.
Agreement in multiplicity was established in \cite{GJ1,GJ2}.
For an alternative argument, see \cite{ZH,MaZ3}.
\end{proof}

\section{The strong shock limit}\label{stronglim}

We now investigate behavior of the Evans function in the strong
shock limit $v_+\to 0$.
By Lemma \ref{conjlem}, Proposition \ref{profdecay},
and Corollary \ref{limv}, this reduces to the problem of
finding the limiting Kato basis $R_+$ at $+\infty$ as $v_+\to 0$.
That is, this is a ``regular perturbation'' problem in 
the sense of \cite{PZ,HLyZ1}, and not a singular perturbation
as in the much more difficult treatment of the 
gas-dynamical part $(v,u_1)$ done in \cite{HLZ}.
On the other hand, we face new difficulties associated
with vanishing of the limiting Evans function at $\lambda=0$
and branch points in both limiting and finite Kato flows,
which require additional stability index and Riemann surface computations
to complete the analysis.

\subsection{Limiting eigenbasis at $+\infty$ as $v_+\to 0$, $|\lambda|\ge \theta>0$}\label{lambig}

Fixing $\mu=1$ without loss of generality, we
examine the limit of the stable subspace as $v_+\to 0$ of
\begin{equation}\label{Aplus}
A_+(\lambda) =
\begin{pmatrix}
        0 & 1 & 0 &0\\
        \lambda v_+& v_+  & 0& -\sigma B_1^* v_+ \\
        0 & 0 &0 & \sigma \mu_0\\
        0 & -B_1^* v_+ & \lambda v_+^2 & 
        v_+^2 (\sigma \mu_0)
    \end{pmatrix}.
\end{equation}

Making the ``balancing'' transformation
\be\label{balance}
\tilde A_+:=v_+^{-1/2}TA_+T^{-1},
\quad
T:={\rm diag  }\{ v_+^{1/2},1,1,1\}
\ee
and expanding in powers of $v_+$, we obtain
\ba
\tilde A_+ &= 
\tilde A^+_{0}+
v_+^{1/2}\tilde A^+_{1/2}+
v_+^{3/2}\tilde A^+_{3/2}\\
&:=
\begin{pmatrix}
        0 & 1 & 0 &0\\
        \lambda  & 0  & 0& 0\\
        0 & 0 &0 & 0\\
        0 & 0 &0 & 0\\
    \end{pmatrix}
+v_+^{1/2} \begin{pmatrix}
        0 & 0 & 0 &0\\
0& 1  & 0& -\sigma B_1^*  \\
        0 & 0 &0 &  \sigma \mu_0\\
        0 & -B_1^*  & \lambda  & 0
    \end{pmatrix}
\\
&\quad
+v_+^{3/2}
\begin{pmatrix}
        0 & 0 & 0 &0\\
        0 & 0 & 0 &0\\
        0 & 0 & 0 &0\\
        0 & 0 & 0 & \sigma \mu_0\\
    \end{pmatrix}.
\ea
Noting that the upper lefthand $2\times 2$ block of $\tilde A_{0}^+$
has eigenvalues $\pm \sqrt \lambda$ bounded from zero, we find \cite{Kato}
that $\tilde A_+$ has invariant projections 
$\Pi_1=\cR_1 \cL_1^*$ and $\Pi_2=\cR_2 \cL_2^*$
within $O(v_+^{1/2})$ of the standard Euclidean projections onto the 
first--second and the third--fourth coordinate directions, i.e.,
$$
\cR_1= \bp 1 & 0\\ 0 & 1\\ 
0 & 0\\ 0 & 0\\\ep
+ O(v_+^{1/2}), \;
\cR_2= \bp 
0 & 0\\ 0 & 0\\
1 & 0\\ 0 & 1\\ 
\ep
+ O(v_+^{1/2}),
$$
$$
\cL_1= \bp 1 & 0 & 0 & 0\\ 
0 & 1 & 0 & 0\\\ep
+ O(v_+^{1/2}) , \;
\cL_2= \bp 0 & 0 & 1 & 0\\ 
0 & 0 & 0 & 1\\\ep
+ O(v_+^{1/2}).
$$

Indeed, looking more closely-- expanding in powers of
$v_+^{1/2}$ and matching terms-- we find after a brief
calculation
$$
\cR_2= \bp 
0 & \frac{\sigma B_1^*v_+^{1/2}}{\lambda} \\ 0 & 0\\
1 & 0\\ 0 & 1\\ 
\ep
+ O(v_+),\;
\cL_2= \bp 0 & 0 & 1 & 0\\ 
B_1^* v_+^{1/2} & 0 & 0 & 1\\\ep
+ O(v_+).
$$

Looking at 
$
\cL_1\tilde A_+\cR_1=
\bp 0 & 1\\ \lambda &v_+^{1/2} \ep + O(v_+)
$
and noting that $\pm \sqrt{\lambda}$ are spectrally separated
by the assumption $|\lambda|\ge \theta>0$, we find that the stable
eigenvector within this space is 
$$
(1,-v_+^{1/2}/2-\sqrt{v_+/4+\lambda})^T+O(v_+^{1/2}),
$$
and thus the corresponding stable eigenvector within the full space
is 
$$
\tilde R_1= ( 1,-v_+^{1/2}/2-\sqrt{v_+/4+\lambda}, 0,0)^T+O(v_+^{1/2}).
$$

Looking at 
$
v_+^{-1/2}\cL_2\tilde A_+\cR_2=
\bp 0 & \sigma \mu_0 \\ \lambda & \sigma \mu_0 v_+ \ep + O(v_+^{3/2})
$
and noting that the eigenvalues 
$-\sigma \mu_0 v_+/2 \pm \sqrt{\sigma^2 \mu_0^2 v_+^2/4+ \sigma \mu_0\lambda}$ 
of the principal part are again spectrally separated so long as
$\sigma \mu_0>0$ are held fixed,
we find that the stable
eigenvector within this space is 
$$
\Big(1, - v_+/2 - \sqrt{ v_+^2/4+ \lambda/\sigma \mu_0}\;
\Big)^T+O(v_+^{3/2}),
$$
and thus the corresponding stable eigenvector within the full space
is 
$$
\tilde R_2= \Big(O(v_+^{1/2}),0, 1,
- v_+/2 - \sqrt{ v_+^2/4+ \lambda/\sigma \mu_0}\;
\Big)^T+O(v_+^{3/2}).
$$

Converting back to original coordinates, we find stable eigendirections
$T^{-1}\tilde R_1= (1,0,0,0)^T+O(v_+^{1/2})$
and
$$
T^{-1}\tilde R_2= \left(*,0, 1, - v_+/2 - \sqrt{ v_+^2/4+ \lambda/\sigma \mu_0}\right)^T+O(v_+^{3/2}),
$$
or, using an appropriate linear combination,
\be\label{Rs}
\hat R_1=\bp 1 \\0\\0\\0\ep + O(v_+^{1/2}),
\quad
\hat R_2=\bp 0\\0\\ 1\\
-v_+/2 - \sqrt{ v_+^2/4+ \lambda/\sigma \mu_0} \ep 
+ O(v_+^{1/2}).
\ee

Finally, we deduce the limiting Kato ODE flow as $v_+\to 0$.
A straightforward property of the Kato ODE is that it is 
invariant under constant coordinate transformations such as \eqref{balance}.
Thus, we find, for appropriate initialization, that 
$R_1^0\equiv (r,0,0,0)^T$,
where $(r,s)^T$ is the Kato 
eigenvector associated with $\bp 0 & 1\\ \lambda & v_+^{1/2} \ep $, or
(by the calculation of Example \ref{sqrteg}, setting $\eta=v_+^{1/2}$)
\be\label{R1onepre}
R_1\sim
\Big( \frac{v_+/4 + 1}{v_+/4+\lambda} \Big)^{1/4} 
 ( 1,0, 0,0)^T ,
\ee
with limit
\be\label{R1one}
R_1^0= (\lambda^{-1/4},0,0,0)^T.
\ee

Similar considerations yield a second limiting solution
$R_2=(0,0,r,s)^T$, where $(r,s)^T$ is the Kato eigenvector
associated with $\bp 0 & \sigma \mu_0\\ \lambda &\sigma \mu_0 v_+ \ep $, or
\be\label{R1twopre}
R_2\sim
\Big(\frac {v_+^2/4 + 1/\sigma\mu_0} {v_+^2/4+\lambda/\sigma \mu_0} \Big)^{1/4}
(0,0, 1, - v_+/2 - \sqrt{ v_+^2/4+ \lambda/\sigma \mu_0})^T,
\ee
with limit
\be\label{R2one}
R_2^0= 
\Big(0,0, \lambda^{-1/4}, 
- \lambda^{1/4}/\sqrt{\sigma\mu_0})\Big)^T.
\ee

We collect these observations as the following lemma.

\bl\label{limRlem}
On compact subsets of $\{\Re \lambda \ge 0\}\setminus \{0\}$, $R_1$ and $R_2$ converge uniformly in relative error
to fixed (i.e., independent of $\lambda$) linear combinations of $R_1^0$ and $R_2^0$ as defined in \eqref{R1one} and \eqref{R2one}.
\el

\br\label{goodcomp}
The above computations show that the formulae for $R_j^0$ remain valid so long as $|\lambda| \gg  v_+^2$.  Recall, for $|\lambda| \ll v_+^2$, the behavior is as described in Lemma \ref{rkatolem}.  This leaves only the case $|\lambda|\sim v_+^2$ unexamined.  
\er

\subsection{Limiting behavior at $+\infty$ as $v_+$, $\lambda \to 0$}
\label{lamsmall}
As suggested by the different behavior for $|\lambda| \gg v_+^2$ and
$|\lambda| \ll v_+^2$, behavior in
the transition zone $|\lambda|\sim v_+^2$ appears to be rather complicated,
and so we do not attempt to describe either the limiting subspace or
limiting Kato flow as $v_+$ and $\lambda$ simultaneously go to zero,
recording only the following topological information.

\bl\label{portrait}
In rescaled coordinates $(w,w',\alpha, \alpha'/\hat v)$,
for $v_+>0$ sufficiently small, the Kato product $R_1^+\wedge R_2^+$
defined above is analytic for $\Re \lambda \ge -\theta$, $\theta>0$
sufficiently small, except at two 
(possibly coinciding) 
singularities $\lambda_1$, $\lambda_2$ 
near the origin,
each of fourth-root type and blowing up as $(\lambda-\lambda_j)^{-1/4}$.
\el

\begin{proof}
Equivalently, by the computation of Example \ref{sqrteg},
we must show that each of the stable eigenvalues $\alpha_1$, $\alpha_2$ of
$A_+$ collide with unstable eigenvalues at precisely one point
$\lambda_j$, which is a branch point of degree two.
Computing the characteristic polynomial 
$ p(\lambda,\alpha):=\det (A_+(\lambda)-\alpha) $ 
with the aid of \eqref{evalcrit},
we obtain
$
p(\lambda, \alpha)=
(\alpha^2 - v_+ \alpha -\lambda v_+)
(\alpha^2 - \sigma \mu_0 v_+^2 \alpha -\lambda \sigma \mu_0  v_+^2)
- \sigma (B_1^*)^2 v_+^2\alpha^2,  $
a quadratic in $\lambda$.  Taking the resultant of $p$ with $\partial_\alpha 
p$,
we therefore obtain a quadratic polynomial $q(\lambda)$ whose
roots $\lambda_j$ are the points at which $A_+(\lambda)$
has double eigenvalues.  Noting that $\lambda_1=\lambda_2=0$
for $v_+=0$, we find by continuity that they lie near the origin
for $v_+$ sufficiently small.

Noting that 
$\partial_\alpha ^3 p= 8\alpha - 2( v_+ + \sigma \mu_0v_+^2)$,
we find that $\lambda_1=\lambda_2$ only if 
$\alpha =(1/4) ( v_+ + \sigma \mu_0v_+^2)$.
Plugging this into the linear equation 
$\partial_\alpha^2 p(\lambda, \alpha)=0$ in $\lambda$ gives
the further information
$ (2v_++ O(v_+^2))\lambda= v_+^2(-1/4-\sigma (B_1^*)^2 ) + O(v_+^3), $
hence $\lambda \sim v_+(-1/8-\sigma (B_1^*)^2/2 ) \gg v_+^2$ for $v_+$ small.
But, in this case, the analysis of \ref{limRlem}
implies that this coalescence represents a pair of branch points
of degree two and not a single branch point of degree four; 
see Remark \ref{goodcomp}.
The same analysis prohibits the possibility that either of
$\lambda_j$ represents a branch point of degree four,
hence they must each be degree two or three.
Finally, the global behavior described in Lemma \ref{limRlem}
excludes the possibility that they be degree three, leaving
the asserted result as the only possible outcome.
\end{proof}

\subsection{Limiting subspaces at $-\infty$ as $\lambda\to 0$ }\label{both}
{\bf Case (i)}($|B_1^*|/\sqrt{\mu_0}>1$)
For $\mu=1$, $v_-=1$, \eqref{evalcrit} becomes
\be\label{evalcrit2}
\lambda \in 
\sigma \Big(
\mu
\bp
1& -B_1^*/\mu_0 \\
-B_1^* & 1
\ep 
+ \mu^2
\bp
1 & 0\\
0 & 1/\sigma \mu_0 
\ep 
\Big),
\ee
whence we find by a standard limiting analysis \cite{ZH,MaZ3}
as $\lambda \to 0$ that the unstable subspace of $A_-$,
expressed in coordinates $(w,\alpha,w',\alpha')$, is spanned
by the direct sum of $R_1^-=\bp r_1\\0\ep$ and $R_2^-= \bp s_1\\ \mu_2 s_2\ep$,
where $r_1$ is the stable subspace of
$\bp
1& -B_1^*/\mu_0 \\
-B_1^* & 1
\ep 
$
and $s_2$ is the unstable subspace of
\be\label{ba}
\bp
1 & 0\\
0 & 1/\sigma \mu_0 
\ep^{-1}
\bp
1& -B_1^*/\mu_0 \\
-B_1^* & 1
\ep 
=
\bp
1& -B_1^*/\mu_0 \\
-B_1^* \sigma \mu_0 &  \sigma \mu_0
\ep ,
\ee
with $\mu_2$ the associated eigenvalue.

By direct computation, $r_1 \equiv (1,-\sqrt{\mu_0})^T$, while 
$$
\begin{aligned}
s_2&=\Big(1, 
\frac{(1-\sigma \mu_0)- \sqrt{(1-\sigma \mu_0)^2 + 4\sigma (B_1^*)^2}}
{2B_1^*/\mu_0}
\Big)^T\\
&=
\Big(1,
\frac{-2\sigma \mu_0 B_1^*}{(1-\sigma \mu_0)+
\sqrt{(1-\sigma \mu_0)^2 + 4\sigma (B_1^*)^2}}\Big)^T,
\end{aligned}
$$
and
$$
\mu_2=\frac{(1+\sigma\mu_0)+ \sqrt{(1-\sigma \mu_0)^2 + 4\sigma (B_1^*)^2}}
{2},
$$
from which we recover expressions in 
standard coordinates 
$\Big(w, w', \alpha, \frac{\alpha'}{\sigma \mu_0}\Big)^T$
of 
\be\label{limR1}
R_1^-=\Big(1,0,
-\sqrt{\mu_0} , 0\Big)^T
\ee
and
\be\label{limR2}
R_2^-=
\bp
1\\ 
\frac{(1+\sigma\mu_0) + \sqrt{(1-\sigma \mu_0)^2 + 4\sigma (B_1^*)^2}}{2}\\
\frac{(1-\sigma \mu_0)- \sqrt{(1-\sigma \mu_0)^2 + 4\sigma (B_1^*)^2}}
{2B_1^*/\mu_0}\\
-(2\sigma \mu_0 B_1^*)
\frac{(1+\sigma\mu_0)+ \sqrt{(1-\sigma \mu_0)^2 + 4\sigma (B_1^*)^2}}
{(1-\sigma \mu_0)+ \sqrt{(1-\sigma \mu_0)^2 + 4\sigma (B_1^*)^2}}\\
\ep.
\ee

{\bf Case (ii)}($|B_1^*|/\sqrt{\mu_0}\le 1$)
In this case, the unstable subspace of $A_-$ is spanned
by the direct sum of $(s_1, \mu_1 s_1)^T$ and $(s_2,\mu_2 s_2)$,
where $s_j$, $\mu_j$ are the unstable eigenvectors, eigenvalues
of \eqref{ba}, giving, by a similar computation as above,
\be\label{limR1ii}
R_1^-=
\bp
1\\ 
\frac{(1+\sigma\mu_0) - \sqrt{(1-\sigma \mu_0)^2 + 4\sigma (B_1^*)^2}}{2}\\
\frac{(1-\sigma \mu_0)+ \sqrt{(1-\sigma \mu_0)^2 + 4\sigma (B_1^*)^2}}
{2B_1^*/\mu_0}\\
-(2\sigma \mu_0 B_1^*)
\frac{(1+\sigma\mu_0)- \sqrt{(1-\sigma \mu_0)^2 + 4\sigma (B_1^*)^2}}
{(1-\sigma \mu_0)- \sqrt{(1-\sigma \mu_0)^2 + 4\sigma (B_1^*)^2}}\\
\ep.
\ee
and $R_2^-$ as in \eqref{limR2}.

\br\label{decayrmk}
The precise form of the eigenbases is not really important here,
only the fact that in case (i) there is a limiting direction
\eqref{limR1} corresponding to a nondecaying, zero-eigenvalue mode, 
whereas in case (ii) all solutions asymptotic
to $\Span\{R_1^-,R_2^-\}$ decay exponentially as $x\to -\infty$.
\er

\subsection{The limiting Evans function}\label{limev}
\begin{definition}
We define the limiting Evans function $D^0$ as the Evans function
associated with the limiting ODE \eqref{1sys} with $\hat v=\hat v^0$,
$\hat v^0$ as defined in Corollary \ref{limv}, with $R_+$ (indeterminate
for this system, since $A_+$ is almost empty) taken as
$R_+^0:=\lim_{v_+\to 0} R_+ $ computed above in \eqref{limR1}, \eqref{limR2},
and the renormalizations $\check D^0$, $\hat D^0$ as in \eqref{checkD1},
\eqref{hatD1}.
\end{definition}

\bpr\label{limconv}
Appropriately normalized,\footnote{As done automatically by our
method of numerical initialization; see Section \ref{numerics}.}
$D\to D^0$, $\check D\to \check D^0$, and $\hat D\to \hat D^0$
 uniformly on compact subsets of $\{\Re \lambda \ge 0\}
\setminus\{0\}$, up to a constant factor independent of $\lambda$.
Moreover, $\check D^0$ is
continuous on $\{\Re \lambda \ge 0\}$ and analytic
except for a square-root singularity at $\lambda=0$.
Both $D$ and $D^0$ extend meromorphically
to $B(0,r)$, for $r$, $v_+>0$ sufficiently small,
$D^0$ with a single square-root singularity $\lambda^{-1/2}$ at
the origin, and $\check D$ with a pair of fourth-root
singularities $(\lambda-\lambda_1)^{-1/4}$ and
$(\lambda-\lambda_2)^{-1/4}$ for  $\lambda_j\in B(0,r)$,
with $D\to D^0$ on $\partial B(0,r)$ for these extensions as well.
\epr

\begin{proof}
Convergence of $D$ on $\{\Re \lambda \ge 0\}\setminus \{0\}$ follows by
Lemmas \ref{conjlem} and \ref{limRlem},
Proposition \ref{profdecay}, and Corollary \ref{limv}, whereupon
convergence of $\check D$ and $\hat D$ 
follows by comparison of \eqref{checkevans} and \eqref{checkD1}
and of \eqref{hatevans} and \eqref{hatD1}.
Regularity of $\check D^0$ follows by Lemma \ref{conjlem}
and regularity of formulae \eqref{R1one}, \eqref{R2one},
as does holomorphic extension to $B(0,r)$.
Holomorphic extension of $\check D$ and the asserted description
of singularities follows by Lemmas \ref{conjlem} and \ref{portrait}.
\end{proof}

\subsubsection{Behavior near $\lambda=0$}
At the origin, we have the following 
striking bifurcation in behavior of $\check D^0$.

\bl\label{bif}
For $B_1^*\ge \sqrt{\mu_0}$, $\check D^0(0)\equiv 0$.
For $0\le B_1^*< \sqrt{\mu_0}$, $\check D^0(0)\ne 0$.
\el

\begin{proof}
The first assertion follows from the fact that, by \eqref{limR1},
for $B_1^*\ge \sqrt{\mu_0}$,
both the initializing eigenvector $R_1^-\equiv (1,0,-\sqrt{\mu_0},0)^T$
of $A_-$ and the initializing eigenvectors $R_1^0$, $R_2^0$
at $+\infty$ are {\it preserved by the flow of \eqref{1sys}} when
$\lambda=0$, for any value of $v_+$, 
corresponding to the fact that constant $w\equiv w_0$,
$\alpha\equiv \alpha_0$ are always solutions of 
\eqref{eval} when $\lambda=0$.
Thus, for $B_1\ge \sqrt{\mu_0}$, the first, third, and fourth columns in the
determinant \eqref{eq:evans} defining $\check D^0$,
consist of multiples of $(1,0,-\sqrt{\mu_0},0)^T$, $(1,0,0,0)$, 
and $(0,0,1,0)$, hence the determinant is zero. 
The second assertion follows similarly from the observation that for $B_1^*<\sqrt{\mu_0}$,
the solutions of \eqref{1sys} corresponding to 
$R_1^-$, $R_2^-$ at $\lambda=0$ are exponentially decaying as $x\to -\infty$,
hence independent of the constant solutions
corresponding to the initializing eigenvectors $R_1^0$, $R_2^0$ at $+\infty$.
\end{proof}

\br 
As the proof indicates, the bifurcation described in Lemma \ref{bif} originates in the nature (i.e., decaying vs. constant) of solutions as $x\to -\infty$, corresponding to change in type of the underlying shock.  Generically we expect that $\check D^0$ vanishes to square-root order at $\lambda=0$ for $B_1^*\ge \sqrt\mu_0$, since it has a square-root singularity there.
\er

\subsection{Proof of the limiting stability criteria}\label{largepf}

\begin{proof} [Proof of Theorem \ref{largeamp}]
By Theorem \ref{hfthm}, Proposition \ref{limconv},
and properties of limits of
analytic functions, it suffices to consider the case
that $|\lambda|$ and $|v_+|$ are arbitrarily small.
Denote the Evans function 
for a given $v_+$ as $D^{v_+}$, suppressing other parameters.
By Lemmas \ref{conjlem}, and \ref{portrait}, 
we may for $v_+$ sufficiently small
extend $D^{v_+}$ meromorphically to a
ball $B(0, r)$ about $\lambda=0$, and the resulting extension
is analytic (multi-valued) except at a pair of branch singularities
$\lambda_1$ and $\lambda_2$ at which $D^{v_+}$ behaves 
as $d_j(\lambda-\lambda_j)^{-1/4}$ for complex constants $d_j$.
Making a branch cut on the segment between $\lambda_1$ and 
$\lambda_2$ as in Figure \ref{branchcut}, we may view $\check D^{v_+}$ as an
analytic function on a slit, two-sheeted Riemann surface obtained by 
circling the deleted segment $\overline{\lambda_1 \lambda_2}$.
Applying Proposition \ref{limconv} again, we find that
$$
D^{v_+}(\lambda)\sim  D^0(\lambda) \sim 
c_0\lambda^{-1/2} + c_1 
$$
on $\partial B(0,r)$ as $v_+\to 0$, where $c_j$ are complex
constants.

By Lemma \ref{bif}, $c_0\ne0$ for $B_1^*<\sqrt{\mu_0}$.
For $B_1^*\ge \sqrt{\mu_0}$,
$c_0\equiv 0$, and the condition that $\hat D^0\sim D^0$ 
not vanish at the origin is the condition that $c_1\ne 0$.
Taking the winding number of $D^{v_+}$
around $\partial B(0,r)$, therefore, on the two-sheeted Riemann
surface we have constructed-- that is, circling twice as $D^{v_+}$
varies meromorphically-- we obtain in the first place {\it winding number
negative one}, and in the second (assuming $c_1\ne 0$) 
{\it winding number zero}.
Subtracting the winding number about 
the segment $\overline{\lambda_1 \lambda_2}$,
necessarily greater than or equal to negative one by the asymptotics
of $D^{v_+}$ at $\lambda_j$, we find by Cauchy's Theorem/Principle of
the Argument that for $B_1^*<\sqrt{\mu_0}$
there are {\it no zeros} of 
$\check D^{v_+}$ within $B(0,r)\setminus \overline{\lambda_1 \lambda_2}$,
concluding the proof in this case.

For $B_1^*\ge \sqrt{\mu_0}$, we find that there is {\it at most one zero}
of $\check D^{v_+}$ within $B(0,r)\setminus \overline{\lambda_1 \lambda_2}$.
To complete the proof, we appeal as in \cite{CHNZ} to the 
mod-two {\it stability index} of \cite{GZ,Z1,Z2}, which counts the
parity of the number of unstable eigenvalues according to its
sign, and is given by a nonzero real multiple of $D^{v_+}(0)$.
To establish the theorem, it suffices to prove then that this stability
index does not change sign, since we could then conclude stability
by homotopy to a limiting stable case $\sigma \to +\infty$ or
$B_1^*\to +\infty$.  (Alternatively, we could check the sign by
explicit computation, but we do not need to do so.)
Recall that $D^{v_+}(0)$ is a nonvanishing real multiple of
the product of the hyperbolic stability determinant and a transversality
coefficient vanishing if and only if the traveling wave connection is
not transverse.

As noted already in Proposition \ref{loptran}, the hyperbolic stability
determinant does not vanish for Lax $3$-shocks, so is nonvanishing
for $B_1^*>\sqrt{\mu_0}$.
The transversality coefficient is an Evans function-like 
Wronskian of decaying solutions of the linearized traveling-wave ODE
\be\label{trav2}
\hat v^{-1}
\bp
\mu_0 &0\\
0& 1/\sigma \mu_0
\ep
\bp \tilde u\\ \tilde B \ep'=
\bp
\mu_0 & -B_1^*\\
-B_1^*&\hat v 
\ep
\bp \tilde u\\ \tilde B \ep,
\ee
hence converges by Lemma \ref{conjlem} to the corresponding
Wronskian for the limiting system with $\hat v$ replaced by 
$\hat v^0$.
But, this limit must be nonzero wherever $c_1$ is nonzero, 
or else $\check D^0$ would
vanish at $\lambda=0$ to at least order $\lambda$ due to a second
linear dependence in decaying as well as asymptotically constant
modes, and so $c_1=0$ in contradiction to our assumptions.
Therefore, transversality holds by assumption for
$0\le B_1^*-\sqrt{\mu_0} \le \max\{ \sqrt{ \frac{\mu_0}{2}}, 
\sqrt{\frac{1}{2\sigma}} \, \} $ and $v_+$ sufficiently small.

On the other hand, an energy estimate like that of Section 
\ref{transest} sharpened by the observation that
$|\hat v^0_x|\le \hat v$, improving the general estimate
$\hat v_x\le \gamma \hat v$,
yields transversality of \eqref{trav2} for
$ B_1^*-\sqrt{\mu_0} \ge \max\{ \sqrt{ \frac{\mu_0}{2}},
\sqrt{\frac{1}{2\sigma}} \, \} $.
Thus, we have transversality for all $B_1^*\ge \sqrt{\mu_0}$,
and we may conclude by homotopy to the stable $B_1^*\to \infty$
limit that the transversality coefficient has a sign consistent
with stability, that is, there are an even number of nonstable zeros 
$\Re \lambda \ge 0$ of
the Evans function $D^{v_+}$ for $v_+$ sufficiently small.
Since we have already established that there is at most one nonstable 
zeros of $D^{v_+}$, this implies that there are no nonstable
zeros, yielding stability as claimed.
\end{proof}

\begin{figure}[t]
\begin{center}
\includegraphics[width=8cm]{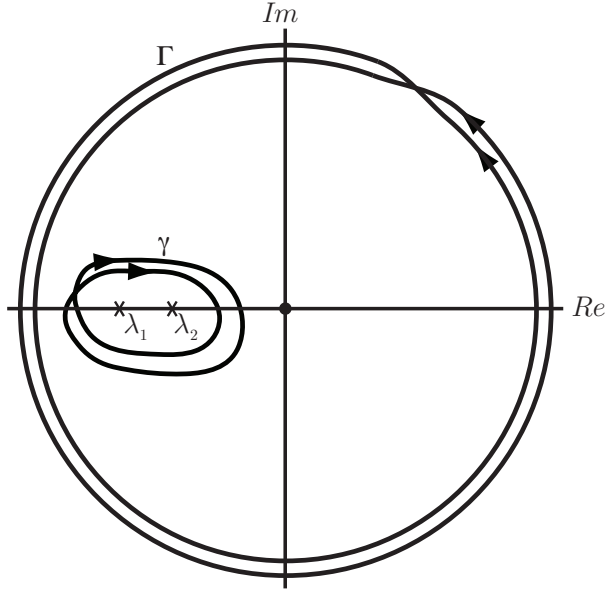} 
\end{center}
\caption{
Winding number on two-sheeted Riemann surface.
}
\label{branchcut}
\end{figure}

\br\label{rmA}
From Lemma \ref{bif}, there might appear to be inherent numerical
difficulty in verifying nonvanishing of $\check D$ for 
$B_1^*$ less than but close to $\sqrt{\mu_0}$, since $\check D$ vanishes
at the origin for $B_1^*=\sqrt{\mu_0}$.
However, this is only apparent, since we know analytically that $\check D^0$
does not vanish at, hence also near, $\lambda=0$ for $B_1^*<\sqrt{\mu_0}$.
\er

\section{Further asymptotic limits}\label{further}
In this section, we require beyond the conjugation
lemma the further asymptotic ODE tools of the convergence
and tracking/reduction lemmas of \cite{MaZ3,PZ}.
Statements and proofs of these results are given
for completeness in Appendix \ref{s:contrack}.

\subsection{The small-$\sigma$ and -$\mu_0$ limits}

\begin{proof}[Proof of Theorem \ref{sigmu} ($\sigma\to 0$)]
Considering \eqref{1sys} as indexed by $p:=\sigma$ with $A=A^\sigma$, we have \eqref{residualest}--\eqref{newest} by uniform exponential convergence of $\hat v$ as $x\to \pm \infty$.  Take without loss of generality $\mu=1$.  Applying Lemma \ref{evanslimit}, we find that the transformations $P^\sigma_\pm$ conjugating \eqref{1sys} to its constant-coefficient limits $Z'=A^\sigma_\pm Z$, by which the Evans function is defined in \eqref{eq:evans}, are given to $O(\sigma)$ by the transformations $P^0_\pm$ conjugating to its constant-coefficient limits the $\sigma=0$ system $W'=A^0(x,\lambda )W$, with
\begin{equation}\label{mu01sys}
A^0= \begin{pmatrix}
        0 & 1 & 0 &0\\
        \lambda \hat v& \hat v& 0& 0\\
        0 & 0 &0 & 0\\
        0 & -B_1^*\hat v& \lambda \hat v & 0 
    \end{pmatrix};
\end{equation}
that is, $P^\sigma_\pm=P^0_\pm +O(\sigma)$.
Moreover, for $v_+$ bounded from zero, and $\sigma/\lambda$ sufficiently
small, it is straightforward to verify that the stable subspace
of $A_+(\lambda,\sigma)$ is given to order $\sigma/\lambda$ by the
span of $(r_+,s_+)^T$ and $(0, q_+)^T$, where $r_+^T$ is the stable eigenvector
of $\begin{pmatrix} 0 & 1\\ \lambda \hat v& \hat v \end{pmatrix}$
and $q_+=( -\sqrt{\sigma \mu_0/\lambda}, 1)$, 
and, similarly, the unstable subspace
of $A_-(\lambda,\sigma)$ is given to order $\sigma/\lambda$ by the
span of $(r_-,s_-)^T$ and $(0, q_-)^T$, where $r_-^T$ is the 
unstable eigenvector of $\begin{pmatrix} 0 & 1\\ \lambda \hat v& \hat v \end{pmatrix}$
and $q_-=(\sqrt{\sigma \mu_0/\lambda},1)$.  Thus, the Evans function
for $\sigma>0$, appropriately rescaled, 
is within $O(\sigma/\lambda)$ of the product of the Evans function
of the diagonal block
$w'=\begin{pmatrix} 0 & 1\\ \lambda \hat v& \hat v \end{pmatrix}w$
initialized in the usual way, which is nonzero by our earlier
analysis of the decoupled case $B_1^*=0$, and of the trivial flow
\be\label{tflow}
 w'= \hat v(x) \begin{pmatrix} 0 & 0\\ \lambda  & 0 \end{pmatrix}w 
\ee
initialized with vectors parallel to $q\pm^T$ in the conjugated flow.
To estimate the second determinant, we produce explicit conjugators $P^0_\pm$
for the $\sigma=0$ flow, making use of the observation of Remark
\ref{specialform} that, by lower block-triangular form of the $A^0$,
these may be taken lower block-triangular as well, and so the problem
reduces to finding conjugators $p^0_\pm$ for the flow \eqref{tflow}
in the lower block.
But, these may be found by exponentiation to be
$$
p^0_\pm= \bp 1 & 0\\ c_\pm(x) & 1 \ep,
\qquad
c_\pm(x):=\int_x^{\pm \infty} (\hat v-v_\pm)(y) dy,
$$
yielding an Evans function to order $\sigma/\lambda$ of
$$
\det(p^0_- q_-, p^0_+q_+)=
\det\bp \sqrt{\sigma \mu_0/\lambda}& -\sqrt{\sigma \mu_0/\lambda}\\
1+ c_-(0) \sqrt{\sigma \mu_0/\lambda}& 1 -c_+\sqrt{\sigma \mu_0/\lambda}
\ep,
$$
or 
$$
(1+O(\sqrt{\sigma/\lambda})) 2  \sqrt{\sigma \mu_0/\lambda} 
\ne 0. 
$$
In particular, the Evans function is nonvanishing for
$1 \gg |\sqrt{\sigma/\lambda}| \gg |\sigma|, \, |\sigma/\lambda|$,
as occurs for 
$$
\sigma \ll |\lambda| \ll \sigma^{-1}.
$$
Since the Evans function is nonvanishing in any case for 
$|\lambda|$ sufficiently large, by Theorem \ref{hfthm},
we obtain nonvanishing except in the case $0\le |\lambda|\le C\sigma$,
which must be treated separately.

To treat $|\lambda|\le C\sigma$, notice that the stable/unstable subspaces
of $A^\sigma_\pm$ decouple to order $\sigma$ for $\sigma>0$ sufficiently small
and $\lambda$ only bounded
into the direct sum of $(r,s)_\pm^T$ already discussed and
$(0,\tilde q_\pm)^T$ with $\tilde q_\pm^T$ the stable/unstable
eigenvectors of
$$
\bp 0 & 1\\ \lambda/\sigma \mu_0& \sigma \mu_0v
\ep,
$$
which may be chosen holomorphically as
$\tilde q_\pm= (1, \sigma \mu_0/2 \mp\sqrt{\sigma^2\mu_0^2/4+  \lambda/\sigma \mu_0})$,
with a single square-root singularity at $\lambda= -(\sigma \mu_0)^3/4$.
For $|\lambda|\ge C\sigma$, these factor as
$$
\tilde q_\pm=
(1+O(\sigma/\lambda))(1, \mp \sqrt{\lambda/\sigma \mu_0})=
(1+O(\sigma/\lambda)) (\sqrt{\lambda/\sigma \mu_0}) 
q_\pm,
$$
and the Evans function for $\sigma>0$ by our previous computations
thus satisfies
$$
D^\sigma(\lambda)=
(\lambda/\sigma \mu_0)\times 
(1+O(\sqrt{\sigma/\lambda})) 2 e^{c_-(0)+c_+(0)} \sqrt{\sigma \mu_0/\lambda} 
\sim
C\sqrt{\lambda/\sigma \mu_0}.
$$
Taking the winding number about $|\lambda|=C\sigma$ on the punctured
Riemann surface obtained by circling twice the branch singularity
$\lambda_*=(\sigma \mu_0)^3/4$, we thus obtain winding number one.
Subtracting the nonnegative winding number obtained by circling
twice infinitesimally close to $\lambda_*$, we find (similarly as
in the treatment of the large-amplitude limit, case $B_1^*\ge \sqrt{\mu_0}$) 
that there is at most one root of $D^\sigma$ on $\Re \lambda \ge 0$, for
$\sigma>0$ sufficiently small, so that stability is decided by the
sign of the stability index, which is the product of a transversality
coefficient and the hyperbolic stability determinant (resp.
low-frequency stability condition, in the overcompressive case).
A singular perturbation analysis of \eqref{cr} as $\sigma\to 0$
shows that (since it decouples into scalar fibers) connections are
always transverse for $\sigma>0$, so the transversality coefficient
does not vanish.
Hyperbolic stability holds always for Lax $1$- and $3$-shocks
(Proposition \ref{loptran}), and the low-frequency stability
condition holds for intermediate (overcompressive)
shocks by a similar singular perturbation analysis, 
so the stability determinant does not vanish either.

Thus, the sign of the stability index is constant, and so there
is always either a single unstable root of $D^\sigma$ on $\Re \lambda\ge 0$
or none, in each of the three cases.  But, the former possibility may
be ruled out by homotopy to the stable, small-amplitude limiting case.
Thus, {all type shocks are reduced Evans stable for $\sigma >0$
sufficiently small.}
The asserted $\sqrt {\sigma}$ asymptotics follow from the estimates
already obtained in the proof; uniform convergence to zero follows
by estimating $D^\sigma$ instead to order $\sqrt{\sigma}$, at which
level we obtain a determinant involving two copies of the constant
solution $W\equiv (0,0,0,1)^T$ of the limiting $\sigma=0$
system, giving zero as the limiting value.
\end{proof}

\begin{proof}[Proof of Theorem \ref{sigmu} ($\mu_0\to 0$)]
The case $\mu_0\to 0$ is similar to but a bit tricker than the
case $\sigma\to 0$ just discussed.
Fixing without loss of generality $\mu=1$ and
applying Lemma \ref{evanslimit}, we deduce 
that the transformations $P^{\mu_0}_\pm$ conjugating
\eqref{1sys} to its limiting constant-coefficient systems,
by which the Evans function is defined in \eqref{eq:evans},
satisfy $P^{\mu_0}_\pm=P^0_\pm + O(\mu_0)$, where $P^0_\pm$
are the transformations conjugating to its constant-coefficient limits
the upper block-triangular $\mu_0=0$ system
\begin{equation}\label{mu1sys}
 \begin{pmatrix} W_1\\W_2\\W_4\\W_3 \end{pmatrix}'
= \begin{pmatrix}
        0 & 1 & 0 &0\\
        \lambda \hat v& \hat v &  -\sigma B_1^*\hat v& 0 \\
        0 & -B_1^*\hat v & 0&  \lambda \hat v  \\
        0 & 0 &0 & 0\\
    \end{pmatrix}
 \begin{pmatrix} W_1\\W_2\\W_4\\W_3 \end{pmatrix},
\end{equation}
which has a constant right zero-eigenvector
$r=(\sigma B_1^*, 0, \lambda, 0)^T$ and an orthogonal constant
left zero-eigenvector $\ell=(0,0,0,1)^T$, signaling a
Jordan block at eigenvalue zero.
It is readily checked for the limiting matrices at $\pm \infty$,
similarly as in the $\sigma\to 0$ case,
that for $\mu_0/\lambda$ sufficiently small, the Jordan block
splits to order $\sim \sqrt {\mu_0/\lambda}$, so that the ``slow''
stable eigenvector at $+\infty$ (that is, the one with eigenvalue
near zero) is given by
$$
r+  c_+\sqrt{\mu_0/\lambda}(*,*,*,1)^T + O(\mu_0/\lambda),
$$
and the slow unstable eigenvector at $-\infty$ by
$$
r+  c_-\sqrt{\mu_0/\lambda}(*,*,*,1)^T + O(\mu_0/\lambda),
$$
where $c_\pm $ are constants with a common sign.
(Here, we deduce nonvanishing of the final coordinate of
the second summand without
computation by noting that the dot product with $\ell$ must
be $\sim \sqrt{\mu_0/\lambda}$.)

As for the $\sigma\to 0$ case, we now observe that
\eqref{mu1sys} may be conjugated to constant-coefficients
by block-triangular conjugators
$ P_\pm=\begin{pmatrix} p_\pm & q_\pm \\ 0 &  1 \end{pmatrix} $,
where $p_\pm$ conjugate the upper lefthand block system
\be\label{ub}
 \begin{pmatrix}W_1\\W_2\\W_4 \end{pmatrix}'
=
 \begin{pmatrix}
        0 & 1 & 0 \\
        \lambda \hat v& \hat v &  -\sigma B_1^*\hat v \\
        0 & -B_1^*\hat v & 0  \\
    \end{pmatrix} 
 \begin{pmatrix}W_1\\W_2\\W_4 \end{pmatrix}.
\ee
Moreover, changing coordinates to lower block-triangular form
\be\label{bt}
 \begin{pmatrix}W_1\\W_2\\W_4-\lambda W_1/\sigma B_1^* \end{pmatrix}'
=
 \begin{pmatrix}
        0 & 1 & 0 \\
        0& \hat v &  -\sigma B_1^*\hat v \\
        0 & -B_1^*\hat v -\lambda/\sigma B_1^* & 0  \\
    \end{pmatrix} 
 \begin{pmatrix}W_1\\W_2\\W_4-\lambda W_1/\sigma B_1^* \end{pmatrix},
\ee
conjugating by a lower block-triangular conjugator, and changing back
to the original coordinates,
we see that the conjugators $p_\pm$ may be chosen to preserve the
exact solution $(W_1,W_2,W_4,W_3)^T\equiv r$.

Combining these observations, we find that the Evans function for
$\lambda$ bounded and $\mu_0/\lambda$ sufficiently small is given by
\be\label{Das}
\begin{aligned}
D^{\mu_0}(\lambda)&=
\det
\begin{pmatrix}
\tilde  r  & v_2^-&v_2^+& \tilde r\\
c_-\sqrt{\mu_0/\lambda} & 0 & 0& -c_+\sqrt{\mu_0/\lambda}\\
\end{pmatrix}
+O(\mu_0/\lambda)\\
&=
\det
\begin{pmatrix}
0& v_2^-&v_2^+& \tilde r\\
(c_-+c_+)\sqrt{\mu_0/\lambda} & 0 & 0& c_+\sqrt{\mu_0/\lambda}\\
\end{pmatrix}
+O(\mu_0/\lambda)\\
&=
(c_-+c_+)\sqrt{\mu_0/\lambda} d(\lambda) +O(\mu_0/\lambda) ,
\end{aligned}
\ee
where $r=:\begin{pmatrix} \tilde r\\0 \end{pmatrix}$ and
$d(\lambda):= \det \begin{pmatrix} v_2^-&v_3^+& \tilde r \end{pmatrix}$
is a nonstandard Evans function associated with the 
upper-block system \eqref{ub},
where $v_2^-$ and $v^+_3$ as usual are unstable and stable eigendirections
of the coefficient matrix, but we have included also the neutral mode
$\tilde r$.
Expressed in coordinate \eqref{bt},  $d(\lambda)$ reduces, finally,
to the standard
Evans function $\check d(\lambda)$ for the reduced system
$$
 \begin{pmatrix}W_2\\W_4-\lambda W_1/\sigma B_1^* \end{pmatrix}'
=
 \begin{pmatrix}
         \hat v &  -\sigma B_1^*\hat v \\
         -B_1^*\hat v -\lambda/\sigma B_1^* & 0  \\
    \end{pmatrix} 
 \begin{pmatrix}W_2\\W_4-\lambda W_1/\sigma B_1^* \end{pmatrix},
$$
which may be rewritten as a second order equation
\be\label{two}
\big(\lambda +\sigma (B_1^*)^2\big)z + z'= (z'/\hat v)'
\ee
in $z=W_2'$.
Taking the real part of the complex $L^2$-inner product 
of $z$ against \eqref{two} gives
$$
\Re \lambda \|v\|_{L^2}^2
+ \| v\sqrt{
\big(\Re \lambda +\sigma (B_1^*)^2\big)
}\|_{L^2}^2
=- \| v'/\sqrt{\hat v}\|_{L^2}^2,
$$
contradicting the existence of a decaying solution for $\Re \lambda \ge 0$
and verifying that $\check d(\lambda)\ne 0$.
Consulting \eqref{Das}, therefore, we find that $D^{\mu_0}(\lambda)$
for $\lambda$ bounded and $\mu_0/\lambda$ sufficiently small does
not vanish and, moreover, $D^\mu_0\sim c \sqrt{\mu_0/\lambda}$
for $\lambda$ sufficiently small, $c\ne 0$ constant.
Performing a Riemann surface winding number computation like that
for the case $\sigma \to 0$, 
we find, finally, that $D^{\mu_0}$ does not vanish for any $\Re \lambda \ge 0$.
We omit the details of this last step, since they are essentially
identical to those in the previous case.
Likewise, the asserted asymptotics follow exactly as before.
\end{proof}

\subsection{The large- and small-$B_1^*$ limits}

\begin{proof}[Proof of Theorem \ref{largeB}]
Stability in the small-$B_1^*$ limit follows readily by
continuity of the Evans function with respect to parameters,
the high-frequency bound of Theorem \ref{hfthm}, and
the zero-$B_1^*$ stability result of Proposition \ref{smallprop}.
We now turn to the large-$B_1^*$ limit.
Let us rearrange \eqref{1sys}, $\mu=1$, to
\be\label{Bsys}
    \begin{pmatrix}
        w\\\alpha\\  w'\\ \frac{\alpha'}{\sigma\mu_0 \hat v}
    \end{pmatrix}'=
    \begin{pmatrix}
        0 & 0 & 1 &0\\
        0 & 0& 0& \sigma \mu_0 \hat v \\
        \lambda \hat v & 0 &\hat v & -\sigma B_1^*\hat v \\
        0 & \lambda \hat v & -B_1^*\hat v & \sigma \mu_0 \hat v^2\\
    \end{pmatrix}
    \begin{pmatrix}
        w\\\alpha\\ w'\\ \frac{\alpha'}{\sigma\mu_0 \hat v}
    \end{pmatrix},
\ee
By Theorem \ref{hfthm}, we have stability for $|\lambda|\ge C|B_1^*|^2$
independent of $v_+$.  For $v_+>0$, we find easily stability
for $|\lambda|\ge C|B_1^*|$ for $B_1^*$
sufficiently large.
For, rescaling $x\to |B_1^*|x$, and 
$W\to (\lambda^{1/2}W_1, \lambda^{1/2}W_2, W_3,W_4)^T$,
we obtain $W'=\hat AW= \hat A_0W +O(|B_1^*|^{-1})W$, where
\ba\label{dec}
\hat A_0&=
    \begin{pmatrix}
        0 & 0 & \hat \lambda^{1/2} &0\\
        0 & 0& 0& \hat \lambda^{1/2}\sigma \mu_0 \hat v \\
        \hat \lambda^{1/2} \hat v & 0 &0 & -\sigma \hat v \\
        0 & \hat \lambda^{1/2} \hat v & - v & 0\\
    \end{pmatrix},\\
\ea
with $\hat \lambda^{1/2}:= \lambda^{1/2}/B_1^{*}$, and
$\hat v=\bar v(x/B_1^*)$, $\bar v$ independent of $B_1^*$.

For $\hat \lambda \gg |B_1^*|^{-1}$ it is readily calculated
that $\hat A_0$ has spectral gap $\gg |B_1^*|^{-1}$
for $\Re \lambda \ge 0$.
Indeed, splitting into cases $\hat \lambda \ge C^{-1}$ and
$\hat \lambda \ll 1$, it is readily verified 
in the first case by standard matrix perturbation theory 
that there exist
matrices $R(\hat v(x))$ and $L=R^{-1}$, both smooth functions of $\hat v$, 
such that
$$
L\hat A_0R= D:=\bp M&0\\0&N\ep,
$$
with $\Re M\ge \theta>0$ and $\Re N\le -\theta<0$.
Making the change of coordinates $W=RZ$, we obtain the
approximately block-diagonal equations $Z'=\tilde A Z$, where 
\be\label{form1}
\tilde A:= LAR-L'R=D+ O(|B_1^*|^{-1}).
\ee
Using the tracking/reduction lemma, Lemma \ref{reduction},
we find that there exist analytic functions $z_2=\Phi_2(z_1)=O(r)$ 
and $z_1=\Phi_1(z_2)=O(r)$ such that $(z_1, \Phi_2(z_1)$
and $(\Phi_1(z_2),z_2)$ are invariant under the flow of \eqref{zeq},
hence represent decoupled stable and unstable manifolds of the flow.
But, this implies that the Evans function is nonvanishing on
$\lambda \in \{\Re \lambda\ge 0\}$ for $B_1^*$ sufficiently large
and $|\lambda|^{1/2}\ge |B_1^*|/C$, for any fixed $C>0$.
See \cite{ZH,MaZ3,Z1} for similar arguments.

If $|\lambda^{1/2}| \ll B_1^*$ on the other hand, or, equivalently,
$|\hat \lambda^{1/2}| \ll 1$, then we can decompose $\hat A$ alternatively
as $W'=\hat AW= \hat B_0W +\hat \lambda^{1/2}B_1W+ O(|B_1^*|^{-1})$,
where
\ba\label{dec2}
\hat B_0&=
    \begin{pmatrix}
        0 & 0 & 0 &0\\
        0 & 0 & 0 &0\\
0& 0 &0 & -\sigma \hat v \\
0 & 0 &  - v & 0\\
    \end{pmatrix},
\qquad
\hat B_1&=
    \begin{pmatrix}
        0 & 0 & 1 &0\\
        0 & 0& 0& \sigma \mu_0 \hat v \\
         \hat v & 0 &0 & 0\\
        0 &  \hat v & 0 & 0\\
    \end{pmatrix}.
\ea
By smallness of $\hat \lambda^{1/2}$ together with spectral separation
between the diagonal blocks of $\hat B_0$, there exist $L$, $R$,
$LR\equiv 0$ such that the transformation $W=RZ$ takes the system to
$Z'=(LBR-L'R)Z=CZ+ O(|B_1^*|^{-1})\Theta $, where
$\Theta=\bp 0 & *\\* & *\\\ep$ and
\be\label{dec3}
C= \begin{pmatrix}
        -\hat \lambda \beta^{-1}+O(\hat \lambda^{2}) & 0 \\
        0 & \hat \beta 
    \end{pmatrix},
\qquad
\beta= \begin{pmatrix}
0 & -1/\mu_0 \hat v \\
 - 1 & 0\\
    \end{pmatrix},
\qquad
\hat \beta= \begin{pmatrix}
0 & -\sigma \hat v \\
 - v & 0\\
    \end{pmatrix}.
\ee
Diagonalizing $\beta$ into growing and decaying mode by a further transformation, and applying the tracking lemma again, we may decouple the equations into a scalar uniformly-growing mode, a scalar uniformly-decaying mode, and a $2$-dimensional mode governed by
\be\label{redB}
z'=-\hat \lambda \beta^{-1}z + O(|B_1^*|^{-2}+|\hat \lambda^{2}|)z.
\ee
If $\hat \lambda \gg |B_1^*|^{-1}$, or, equivalently, $|\lambda| \gg |B_1^*|$, then we make a further transformation diagonalizing $\beta^{-1}$ at the expense of an $O(|B_1^*|^{-1})$ error, then use the resulting $\ge |\hat \lambda|$ spectral gap together with the tracking lemma to again conclude nonvanishing of the Evans function.

Thus, we may restrict to the case $|\lambda|\le C|B_1^*|$,
or $|\hat \lambda|\le C|B_1^*|^{-1}$.
Considering again \eqref{redB} in this case,
we find that all
$O(|B_1^*|^{-2}+|\hat \lambda^{2}|)$ entries converge
at rate 
$$
O(|B_1^*|^{-2}) |\hat v-v_+|\le
C |B_1^*|^{-2} e^{-|x|/CB_1^{*}}
$$
to limiting values, whence, by the convergence lemma, Lemma \ref{evanslimit},
the Evans function for the reduced system \eqref{redB} converges to that for
$z'=-\hat \lambda \beta^{-1}z$ as $B_1^*\to \infty$.\footnote{
Here, as in Remark \ref{convrmk}, we are using the fact that also
the stable/unstable eigenspaces at $+\infty$/$-\infty$ converge to 
limits as $|B_1^*|\to \infty$.
Together with convergence of the conjugators $P^{B_1^*}_\pm$, this
gives convergence of the Evans function by definition \eqref{eq:evans}.
}
But, this equation, written in original coordinates, is exactly
the eigenvalue equation for the reduced inviscid system
\be\label{redz}
\lambda \bp w\\\alpha\ep+
\bp
0& -B_1^*/\mu_0 \hat v\\
-B_1^* & 0
\ep
\bp w\\\alpha\ep'=0,
\ee
which may be shown stable by an energy estimate as in the
case $\sigma=0$.

Finally, noting that the decoupled fast equations are independent
of $\lambda$ to lowest order, we find for $|\lambda|$ bounded and
$B_1^*\to \infty$ that the Evans function (which decomposes into
the product of the decoupled Evans functions) converges to a constant
multiple of the Evans function for \eqref{redz}.
For $|\lambda|\le C$, or $\hat \lambda\le C|B_1^*|^{-2}$, however,
we may apply to \eqref{redB} 
the convergence lemma, Lemma \ref{evanslimit}, together with 
Remark \ref{convrmk}, to see that the Evans function in fact 
converges to that for the piecewise
constant-coefficient equations obtained by substituting for the
coefficient matrix on $x\gtrless 0$ its asymptotic values at $\pm \infty$,
that is, the determinant $d:=\det (r^+,r^-)$, where
$r^+$ is the stable eigenvector of 
$$
\bp
0& -B_1^*/\mu_0 \hat v\\
-B_1^* & 0
\ep
$$
at $+\infty$ and
$r^-$ is the unstable eigenvector at $-\infty$.
Computing, we have $r^\pm=(1,\mp \sqrt{\mu_0 v_\pm})^T$ where
giving a constant limit $d=\sqrt{\mu_0 v_+}+ \sqrt{\mu_0}$ as claimed.
\end{proof}

\subsection{The large-$\sigma \mu_0$ limit}

\begin{proof}[Proof of Theorem \ref{sigmu2}]
By Theorem \ref{hfthm}, it is sufficient to treat the case
$|\lambda|\le C\sigma\mu_0$.
Decompose \eqref{1sys}, $\mu=1$, as $W'=R A_0 + A_1$, where
$R:=\sigma \mu_0$, $\hat \lambda:=\lambda/R$, and
$$
A_0=
    \begin{pmatrix}
        0 & 0 & 0 &0\\
        \hat \lambda \hat v & 0 & 0 & -B_1^*\hat v/\mu_0 \\
        0 & 0 &0 &  \hat v\\
        0 & 0 & \hat \lambda \hat v& \hat v^2
    \end{pmatrix},
\qquad
A_1=
    \begin{pmatrix}
        0 & 1 & 0 &0\\
        0& \hat v& 0& 0\\
        0 & 0 &0 & 0\\
        0 & -B_1^*\hat v & 0 & 0\\
    \end{pmatrix},
$$
with $|A_1|\le C$.
If $|\hat \lambda|\ge 1/C>0$, then the lower $2\times 2$ righthand block
of $A_0$ has eigenvalues $\pm \hat v\sqrt{\hat \lambda} $
uniformly bounded from the eigenvalues zero of the upper lefthand
$2\times 2$ block. 
By standard matrix perturbation theory, therefore, there exist
 well-conditioned coordinate transformations $L$,
$R$ depending smoothly on $\hat v$ such that 
$$
D:=LA_0R=
    \begin{pmatrix}
        0 & 0 & 0 &0\\
        \hat \lambda \hat v & 0 & 0 & 0\\
        0 & 0 &\hat v\sqrt{\hat \lambda}& 0\\
        0 & 0 &0 & -\hat v\sqrt{\hat \lambda}\\
    \end{pmatrix}.
$$
Making the coordinate transformation $W=RZ$, we obtain
$Z'=DZ + O(1)Z$.
Applying the tracking lemma, Lemma \ref{reduction}, we reduce to
a system of three decoupled equation, consisting of a uniformly growing scalar
equation, a uniformly decaying scalar equation, and a $2\times 2$ equation
$
    z'=\begin{pmatrix}
        0 & 1 \\
        R\hat \lambda \hat v & 0 
\ep
z + O(R^{-1})z.
$
Rescaling by $z:=\bp 1 & 0 \\0 & R^{1/2}\ep y$,
we obtain
$
    y'=R^{1/2}\begin{pmatrix}
        0 &  1 \\
        \hat \lambda \hat v & 0 
\ep
y + O(R^{-1/2})y,
$
which, by a second application of the tracking lemma, may
be reduced to a pair of decoupled, uniformly growing/decaying
scalar equations, thus completely decoupling the original system
into four growing/decaying scalar equations, from which we may
conclude nonvanishing of the Evans function.

It remains to treat the case $|\hat \lambda| \ll 1$.
We decompose \eqref{1sys}, $\mu=1$, in this case as $W'=R B_0 + B_1$, where
$$
B_0=
    \begin{pmatrix}
        0 & 0 & 0 &0\\
0& 0 & 0 & -B_1^*\hat v/\mu_0 \\
        0 & 0 &0 &  \hat v\\
        0 & 0 & 0& \hat v^2
    \end{pmatrix},
\qquad
B_1=
    \begin{pmatrix}
        0 & 1 & 0 &0\\
        \lambda \hat v & \hat v& 0& 0\\
        0 & 0 &0 & 0\\
        0 & -B_1^*\hat v & \lambda \hat v& 0\\
    \end{pmatrix},
$$
with $|\lambda| \ll R$.
Defining $T=\bp I & \theta \\ 0 & I\ep$ where
$\theta= (0 , -B_1^*\hat v/\mu_0 , \hat v)^T$, and making
the change of variables $W=TZ$, we obtain
$Z'=RC_0Z + C_1Z$, where
$$
C_0= \begin{pmatrix}
         c_0 &0\\
         0& \hat v^2+ \frac{\hat v^2}{R}
\Big(\frac{(B_1^*)^2}{\mu_0}+\lambda\Big)
    \end{pmatrix},
\: \;
c_0= \beta-\theta x =
    \begin{pmatrix}
        0 & 1 & 0 \\
        \lambda \hat v & \hat v-(B_1^*)^2\hat v^2/\mu_0& 
\lambda (B_1^*)^2 \hat v^2/\mu_0 \\
        0 & -B_1^*\hat v^2 & \lambda \hat v^2 \\
    \end{pmatrix},
$$
and
$$
\beta=
    \begin{pmatrix}
        0 & 1 & 0 \\
        \lambda \hat v & \hat v& 0 \\
        0 & 0 &0 \\
    \end{pmatrix},
\qquad
x=
    \begin{pmatrix}
        0 & -B_1^*\hat v & \lambda \hat v
    \end{pmatrix},
\qquad 
C_1=
    \begin{pmatrix}
        0 & *\\
*&0
    \end{pmatrix}
=O(1).
$$

Applying the tracking lemma, we reduce to a decoupled system consisting
of a uniformly growing scalar equation 
\be\label{scalary}
y'=(R+ \frac{(B_1^*)^2}{\mu_0}+ \lambda) \hat v^2 y+O(1/R)y 
\ee
associated with the lower right
diagonal entry and a $3\times 3$ system
$$
 z'=c_0z+ O(1/R)z.
$$
For $|\lambda| \gg 1$, we may write
$c_0=
    \begin{pmatrix}
        0 & 1 & * \\
        \lambda \hat v & 0 & *\\
        0 & 0 & \lambda \hat v^2 \\
    \end{pmatrix}
+ O(1),
$
and apply the tracking lemma again to obtain three decoupled equations
uniformly growing/decaying at rates $\pm \sqrt{ \lambda \hat v}$
and $\lambda \hat v^2$, giving nonvanishing of the Evans function.
For $|\lambda |\le C$ on the other hand, we may apply the convergence
lemma, Lemma \ref{evanslimit}, 
using the fact that the $O(1/R)$ coefficient converges to its
limits as $CR^{-1}e^{-\eta |x|}$, $\eta>0$, together with 
Remark \ref{convrmk},
to obtain convergence to the unperturbed system $z'=c_0z$.
But, this may be recognized as exactly 
the formal limiting system \eqref{infsigmares} for ($\sigma=\infty$), which 
is stable by Theorem \ref{enprop}
(established by energy estimates).
Noting that the Evans function for the full system is the
product of the Evans functions of its decoupled components,
and that The Evans function for the scalar component 
converges likewise to that for
$y'=(R+ \frac{(B_1^*)^2}{\mu_0}+ \lambda) \hat v^2 y$,
or (by direct computation/exponentiation) 
$d(\lambda)=e^{c_0R +c_1 +c_2 \lambda }$ for constants
$c_j$,
we find, finally, that the full Evans function
after renormalization by factor $e^{-c_0R}$
converges to a constant multiple of the Evans function for 
\eqref{infsigmares}.
\end{proof}

\subsection{The limit as $\mu/(2\mu +\eta) \to 0$ or $\to \infty$}\label{ratio}

Finally, we briefly discuss the effect of dropping the gas-dynamical
assumption $\eta=-4\mu/3$, and considering more general values of 
$(2\mu+\eta)>0 $.
This parameter does not appear in the transverse equations, so enters
only indirectly to our analysis, through its effect on the gas-dynamical
profile $\hat v(x)$.
Specifically, denoting $r:=\mu/(2\mu +\eta) \to 0$, and taking as
usual the normalization $\mu=1$, we find that
$$\hat v(x)=\bar v(rx),$$
where $\bar v$ is a profile independent of the value of $r$.
Thus, the study in \cite{FT} of the limit $r\to 0$ is the limit
of slowly-varying coefficients, and the opposite limit $r\to \infty$
is the limit rapidly-varying coefficients.
We consider each of these limiting cases in turn.
Intermediate values of $\mu/(2\mu+\eta)$ would presumably need
to be studied numerically, an interesting direction
for further investigation.

\subsubsection{The $r\to 0$ limit}

In the $r\to 0$ limit, we have the following
result completing the analysis of \cite{FT}

\bpr\label{r0}
Parallel isentropic MHD shocks with ideal gas equation of state
are reduced Evans stable in the limit as $r\to 0$ with
other parameters held fixed.
\epr

\begin{proof} The case $B_1^*<2\sqrt{\mu_0}$ including Lax
$1$-type, overcompressive type, and some Lax $3$-type shocks 
has been established in \cite{FT} by energy estimates.
Thus, it suffices to treat the case of Lax $3$-shocks 
and (by Proposition \ref{hfthm}) bounded $|\lambda|$.

For shocks of any type, it is straightforward to verify that
the Evans function is nonvanishing on
$\lambda \in \{\Re \lambda\ge 0\}\setminus B(0,\eps)$, any $\eps>0$,
for $r$ sufficiently small.  
For, on this set of $\lambda$, 
there is a uniform spectral gap between the real parts of 
the stable and unstable eigenvalues of $A(x,\lambda)$,
for all $x\in (-\infty,+\infty)$, by the hyperbolic-parabolic
structure of the equations, similarly as in Lemma \ref{limlem1}.
It follows by standard matrix perturbation theory that there
exist matrices $R(\hat v(x))$ and $L=R^{-1}$ such that
$$
LAR= D:=\bp M&0\\0&N\ep,
$$
with $\Re M\ge \theta>0$ and $\Re N\le -\theta<0$.
Making the change of coordinates $W=RZ$, we obtain the
approximately block-diagonal equations
\be\label{zeq}
Z'=\tilde A Z,
\ee
where 
\be\label{form}
\tilde A:= LAR-L'R=D+ O(\hat v_x)=
D+ O(r \bar v_x).
\ee
Using the tracking/reduction lemma, Lemma \ref{reduction}, 
we find that there exist analytic functions $z_2=\Phi_2(z_1)=O(r)$ 
and $z_1=\Phi_1(z_2)=O(r)$ such that $(z_1, \Phi_2(z_1)$
and $(\Phi_1(z_2),z_2)$ are invariant under the flow of \eqref{zeq},
hence represent decoupled stable and unstable manifolds of the flow.
But, this implies that the Evans function is nonvanishing on
$\lambda \in \{\Re \lambda\ge 0\}\setminus B(0,\eps)$, any $\eps>0$,
for $r$ sufficiently small.  See \cite{ZH,MaZ3,Z1} for similar arguments.

Now, restrict to the case of a Lax $3$-shock for
$\lambda \in \{\Re \lambda\ge 0\}\cap B(0,\eps)$ and 
$\eps>0$ sufficiently small.
By examination of $A(x,\lambda)$ at $\lambda=0$ in the Lax
$3$-shock case, we find
that on $B(0,\eps)$ it has one eigenvalue $\mu_+$ that is uniformly
negative, one eigenvalue $\mu_-$
that is uniformly positive, and two that are small.
By standard matrix perturbation theory \cite{MaZ3,Z1}, there 
exist matrices $L=\bp L_+\\ L_0\\ L_-\ep$, 
$R=\bp R_+& R_0&R_-\ep$
with $LR\equiv I$ and $L_j'R_j\equiv 0$ such that
$$
LAR(x,\lambda)=\bp  \mu_+ & 0 & 0\\ 0 & \lambda M_0 & 0\\ 0 & 0 & \mu_-\ep,
$$
where the crucial factor $\lambda$ in $\lambda M_0$ is found by 
explicit computation/Taylor expansion \cite{ZH,MaZ3,Z1,MeZ1},
and $M_0=-\beta^{-1}+O(\lambda)$, where $\beta$ as in
\eqref{betapm} is the hyperbolic convection matrix
\be\label{beta}
\beta:=
\bp
1& -B_1^*/\mu_0 \hat v\\
-B_1^* & 1
\ep.
\ee
Moreover, $R$, $L$ depend only on $\hat v$, $\lambda$.
Making the change of coordinates $Z:=LW$, we obtain
$Z'=B(x,\lambda)Z$, where 
$$
B=LAR-L'R
= \bp  \mu_+ & O(\hat v_x) & O(\hat v_x)\\ O(\hat v_x) & \lambda M_0 & O(\hat v_x)\\ O(\hat v_x) & O(\hat v_x) & \mu_-\ep.
$$
Applying the tracking/reduction lemma again, we reduce to three decoupled
equations associated with the three diagonal blocks.  The two scalar
equations associated with $\mu_\pm$ are uniformly growing/decaying,
so do not support nontrivial decaying solutions at both infinities.
Thus, vanishing of the Evans function reduces to vanishing or nonvanishing
on the central block
$$
w'=(\lambda M_0+ O(\hat v_x^2))w,
$$
$w\in \CC^2$.
Noting that $\|\hat v_x^2\|_{L^1}=O(r)\to 0$, we may apply the
convergence lemma, Lemma \ref{evanslimit}, together with Remark 
\ref{convrmk}, 
to reduce finally to
\be\label{csys}
z'=\lambda M_0z,  \quad M_0:=L_0 A R_0.
\ee

For $|\lambda| \ll r$, we have $|\lambda M_0 - \lambda M_0(+\infty)|
\le C\lambda e^{-\theta r}$, hence 
$$
\|\lambda M_0 - \lambda M_0(+\infty)\|_{L^1[0,+\infty)}
=O(\lambda/r)\to 0
$$
and we may apply the conjugation lemma to obtain that the Evans
function for the reduced central system \eqref{csys}
is given by $(1+O(\lambda/r))\det(r_-,r_+)$,
where $r_-$ is an unstable eigenvalue of $M_0(-\infty)$ and
$r_+$ is a stable eigenvalue of $M_0(-\infty)$. 
Noting that these to order $\lambda$ are stable/unstable 
eigenvectors of $\beta_\pm$, we find by direct computation that
the determinant does not vanish.  Indeed, this is exactly the computation that
the Lopatinski determinant does not vanish for $3$-shocks.
Thus, we may conclude that the Evans function does not vanish for
$|\lambda| \ll r$.

Finally, we consider the remaining case $r/C\le |\lambda|\le Cr$, for $C>0$ 
large but fixed.
In this case, we may for the same reason drop terms of order $\lambda^2$
in the expansion of $\lambda M_0$, to reduce by an application of the
convergence lemma, Lemma \ref{evanslimit}, 
and Remark \ref{convrmk},
to consideration of the explicit system
$z'=-\lambda \beta^{-1}z$, which is exactly the {\it inviscid system}
$$
\lambda \bp w\\\alpha\ep'+
\bp
1& -B_1^*/\mu_0 \hat v\\
-B_1^* & 1
\ep
\bp w\\
\alpha
\ep'=0.
$$
But, this may be shown stable by an energy estimate as in the
case $\sigma=0$.
Thus, we conclude that the
Evans function does not vanish either for $|\lambda| \sim r$
and $\Re \lambda\ge 0$, completing the proof.
\end{proof}

\br\label{rrmk}
The Lax $1$-shock case may be treated by a similar but much
simpler argument, since growing and decaying modes decouple
into fast and slow modes.
The overcompressive case is nontrivial from this point of view,
since $\hat v$ passes through characteristic points as $x$ is varied.
However, we conjecture that the argument could be carried out in
this case by separating off the single uniformly fast mode and
treating the resulting $3$-dimensional system by an energy estimate
like that in the $\sigma=0$ or $\mu\to 0$ case.
\er

\subsubsection{The $r\to \infty$ limit}
The opposite limit $r\to \infty$ is that of rapidly-varying coefficients,
and is much simpler to carry out.  By the change of coordinates
$x\to x/r$, we reduce $\hat v(x)$ to a uniformly exponentially
decaying function $\bar v(x)$, and the coefficient matrix $A(x,\lambda)$
to a function $\bar A=r^{-1}A$ that decays to its limits as
$$
|\bar A(x,\lambda)-\bar A_\pm|\le Cr^{-1}e^{-\theta |x|}
\quad \hbox{\rm for} \; x\gtrless 0,
$$
where $\theta\ge \theta_0>0$.
Applying the convergence lemma, Lemma \ref{evanslimit}, 
together with Remark \ref{convrmk},
we obtain the following simple result.
\bpr\label{rinfty}
In the limit $r\to \infty$, the reduced Evans function $D^r$ converges
uniformly on compact subsets of $\Re \lambda\ge 0$ to
$D^0(\lambda)=\det(R^+,R^-)$, 
where $R^\pm$ are matrices solving Kato's ODE, whose columns
span the stable (resp. unstable) subspaces of $A_\pm$.
\epr
That is, determination of stability reduces to evaluation of
a purely linear algebraic quantity whose vanishing may be studied
without reference to the evolution of a variable-coefficient ODE.
This can be seen in the original coordinates by the formal
limit 
$$
A^r(x,\lambda)\to \begin{cases}
A_+(\lambda) & x>0,\\
A_-(\lambda) & x< 0.\\
\end{cases}
$$
We examine stability of $D^0$ numerically, as it does not
appear to be readily accessible analytically.


\section{Numerical Investigation}
\label{numerics}

In this section, we discuss our approach to Evans function computation, which is used to determine whether any unstable eigenvalues exist in our system, particularly in the intermediate parameter range left uncovered by our analytical results in Section \ref{sec:analytical}.  Our approach follows the polar-coordinate method developed in \cite{HuZ2}; see also \cite{BHRZ,HLZ,HLyZ1,Hu3,CHNZ}.  Since the Evans function is analytic in the region of interest, we can numerically compute its winding number in the right-half plane around a large semicircle $B(0,\Lambda)\cap \{\Re \lambda \ge 0\}$ containing \eqref{est2}, thus enclosing all possible unstable roots.  This allows us to systematically locate roots (and hence unstable eigenvalues) within.  As a result, spectral stability can be determined, and in the case of instability, one can produce bifurcation diagrams to illustrate and observe its onset.  This approach was first used by Evans and Feroe \cite{EF} and has been applied to various systems since; see for example \cite{PSW,AS,Br2,BDG}.

\subsection{Approximation of the profile}
\label{profnum}

Following \cite{BHRZ,HLZ}, we can compute the traveling wave profile using one of \textsc{MATLAB}'s boundary-value solvers {\tt bvp4c} \cite{SGT}, {\tt bvp5c} \cite{KL}, or {\tt bvp6c} \cite{HM}, which are adaptive Lobatto quadrature schemes and can be interchanged for our purposes.  These calculations are performed on a finite computational domain $[-L_-,L_+]$ with projective boundary conditions $M_\pm (U-U_\pm)=0$.  The values of approximate plus and minus spatial infinity $L_\pm$ are determined experimentally by the requirement that the absolute error $|U(\pm L_\pm)-U_\pm|$ be within a prescribed tolerance, say $TOL=10^{-3}$; see \cite[Section 5.3.4]{HLyZ1} for a complete discussion.  Throughout much of the computation, we used $L_\pm=\pm 20$, but for some rather extreme values in our parameter range, we had to lengthen our interval to maintain good error bounds.

\subsection{Approximation of the Evans function}\label{evansnum} 

Throughout our numerical study, we used the polar-coordinate method described in \cite{HuZ2}, which encodes $\cW=\rho\,\Omega$, where ``angle'' $\Omega=\omega_1\wedge \cdots \wedge \omega_k$ is the exterior product of an orthonormal basis $\{\omega_j\}$ of $\Span \{W_1, \dots, W_k\}$ evolving independently of $\rho$ by some implementation (e.g., Drury's method) of continuous orthogonalization and ``radius'' $\rho$ is a complex scalar evolving by a scalar ODE slaved to $\Omega$, related to Abel's formula for evolution of a full Wronskian; see \cite{HuZ2} for further details.  This might be called ``analytic orthogonalization'', as the main difference from standard continuous orthogonalization routines is that it restores the important property of analyticity of the Evans function by the introduction of the radial function $\rho$ ($\Omega$ by itself is not analytic); see \cite{HuZ2,Z5} for a discussion on this method.

\subsubsection{Shooting and initialization}

The ODE calculations for individual $\lambda$ are carried out using \textsc{MATLAB}'s {\tt ode45} routine, which is the adaptive 4th-order Runge-Kutta-Fehlberg method (RKF45).  This method is known to have excellent accuracy with automatic error control.  Typical runs involved roughly $300$ mesh points per side, with error tolerance set to {\tt AbsTol = 1e-6} and {\tt RelTol = 1e-8}.  

To produce analytically varying Evans function output, the initial data $\cV(-L_-)$ and $\widetilde{\cV}(L_+)$ must be chosen analytically using \eqref{Kato}.  The algorithm of \cite{BrZ} works well for this purpose, as discussed further in \cite{BHRZ,HuZ2}.

\subsubsection{Winding number computation}
\label{windingalg}
We compute the winding number by varying values of $\lambda$ around the semicircle $B(0,\Lambda)\cap \{\Re \lambda \ge 0\}$ along $120$ points of the contour, with mesh size taken quadratic in modulus to concentrate sample points near the origin where angles change more quickly,
and summing the resulting changes in ${\rm arg}(D(\lambda))$, using $\Im \log D(\lambda) = {\rm arg} D(\lambda) ({\rm mod} 2\pi)$, available in \textsc{MATLAB} by direct function calls.
As a check on winding number accuracy, we test a posteriori that the change in argument of $D$ for each step is less than $0.2$, and add mesh points, as necessary to achieve this.  Recall, by Rouch\'e's Theorem, that accuracy is preserved so long as the argument varies by less than $\pi$ along each mesh interval.

\subsection{Description of experiments: broad range}

In our first numerical study, we covered a broad intermediate parameter range to demonstrate stability in the regions not amenable to our analytical results in Section \ref{sec:analytical}, and also to close our study for unconditional stability for all (finite) system parameters.  Since Evans function computation is essentially ``embarrassingly parallel'', we were able to adapt our STABLAB code to take advantage of MATLAB's parallel computing toolbox, sending to each of 8 ``workers'' on our 8-core Power Macintosh workstation, different values of $\lambda$ producing a net speedup of over 600\%.  The following parameter combinations were examined:
\begin{align*}
(\gamma,v_+,B^*_1, \mu_0,\sigma) &\in \{1.0, 1.1, 11/9, 9/7, 7/5, 5/3, 1.75, 2.0, 2.5, 3.0\}\\
&\quad \times \{0.8, 0.6, 0.4, 0.2, 10^{-1}, 10^{-2}, 10^{-3}, 10^{-4}, 10^{-5} \}\\
&\quad \times \{0.2, 0.8, 1.4, 2.0, 2.6, 3.2, 3.8\}\\
&\quad \times \{0.2, 0.8, 1.4, 2.0, 2.6, 3.2, 3.8\}\\
&\quad \times \{0.2, 0.8, 1.4, 2.0, 2.6, 3.2, 3.8\}.
\end{align*}
In total, this is $30,\!870$ contours, each consisting of at least 120 points in $\lambda$.  In all cases, we found the system to be Evans stable.  Typical output is given in Figure \ref{vplimit}.

We remark that the Evans function is symmetric under reflections along the real axis (conjugation).  Hence, we only needed to compute along half of the contour (usually 60 points in the first quadrant) to produce our results.

\subsection{Description of experiments: limiting parameters}

The purpose of our second study is to verify convergence in the large-amplitude limit ($v_+\rightarrow 0$), as well as illustrate the analytical results the limiting cases, namely as $B^*_1\rightarrow \infty$, $B^*_1\rightarrow 0$, $\mu_0\rightarrow\infty$, $\mu_0\rightarrow 0$, $\sigma\rightarrow\infty$, $\sigma\rightarrow 0$, $r\rightarrow\infty$, and $r\rightarrow 0$.  In all cases, we found our results to be consistent with stability.

In Table \ref{converge_table}, we provide typical relative errors between the normalized and limiting-normalized Evans functions in the large-amplitude limit; we varied $B_1^*$ for illustrative purposes.  The relative errors are given by computing, respectively,
\[
\max_j\left|\frac{\hat D(\lambda_j)-\hat D^0(\lambda_j)}{\hat D^0(\lambda_j)}\right|\quad\mbox{and}\quad\max_j\left|\frac{\check D(\lambda_j)-\check D^0(\lambda_j)}{\check D^0(\lambda_j)}\right|
\]
along the contours except for small $\lambda$ (that is, when $|\lambda| < 10^{-2}$).  Note that in the large-amplitude limit, the relative errors go to zero, as expected.

\begin{table}[!t]
\begin{tabular}{|c||c|c|c|c|c|c|c|}
\hline
$v_+$&$B_1^*=0.2$&$B_1^*=0.8$&$B_1^*=1.4$&$B_1^*=2$&$B_1^*=2.6$&$B_1^*=3.2$&$B_1^*=3.8$\\
\hline
\hline
10(-1)&9.94(-1)&1.23&3.46&9.33&2.16(1)&4.89(1)& 1.09(2)\\
\hline
10(-2)&4.36(-1)&5.19(-1)&1.36&2.82&4.92&8.19&1.32(1)\\
\hline
10(-3)&1.42(-1)&1.72(-1)&4.50(-1)&8.34(-1)&1.25&1.86&2.53\\
\hline
10(-4)&4.23(-2)&5.04(-2)&1.32(-1)&2.30(-1)&3.23(-1)&4.55(-1)&5.88(-1)\\
\hline
10(-5)&1.26(-2)&1.50(-2)&4.00(-2)&6.83(-2)&9.35(-2)&1.28(-1)&1.61(-1)\\
\hline
10(-6)&3.94(-3)&4.77(-3)&1.28(-2)&2.18(-2)&2.96(-2)&4.03(-2)&5.01(-2)\\
\hline
10(-7)&2.16(-3)&2.62(-3)&7.08(-3)&1.20(-2)&1.63(-2)&2.21(-2)&2.75(-2)\\
\hline
10(-8)&2.07(-3)&2.51(-3)&6.78(-3)&1.15(-2)&1.56(-2)&2.12(-2)&2.63(-2)\\
\hline
\end{tabular}
\caption{Relative errors for $\check D(\lambda)$ and $\hat D(\lambda)$. Here $\sigma=\mu_0=0.8$ and $\gamma=5/3$.}
\label{converge_table}
\end{table}


\appendix

\section{The convergence and tracking lemmas}\label{s:contrack}

\subsection{The convergence lemma}\label{s:conv}
Consider a family of first-order equations 
\be \label{gen2}
W'=A^p(x,\lambda)W
\ee
indexed by a parameter $p$, and satisfying exponential
convergence condition \eqref{udecay} uniformly in $p$.
Suppose further that
\begin{equation} \label{residualest}
|(A^p- A^p_\pm)-
(A^0- A^0_\pm)|
\le C|p|e^{-\theta |x|}, \qquad \theta>0
\end{equation}
and
\begin{equation} \label{newest}
|( A^p- A^0)_\pm)| \le C|p|.
\end{equation}
Then, we have the following generalization of Lemma \ref{conjlem},
a simplified version of the {\it convergence lemma} of \cite{PZ}.

\begin{lemma} \label{evanslimit}
Assuming \eqref{udecay} and \eqref{residualest}--\eqref{newest},
for $|p|$ sufficiently small,
there exist invertible linear transformations 
$P_+^p(x,\lambda)=I+\Theta_+^p(x,\lambda)$ 
and $P_-^0(x,\lambda) =I+\Theta_-^p(x,\lambda)$ defined
on $x\ge 0$ and $x\le 0$, respectively,
analytic in $\lambda$ as functions into $L^\infty [0,\pm\infty)$, such that
\begin{equation}
\label{cPdecay} 
| (P^p-P^0)_\pm |\le C_1 |p| e^{-\bar \theta |x|}
\quad
\text{\rm for } x\gtrless 0,
\end{equation}
for any $0<\btheta<\theta$, some $C_1=C_1(\bar \theta, \theta)>0$,
and the change of coordinates $W=:P_\pm^p Z$ reduces \eqref{gen2} to 
\begin{equation}
\label{cglimit}
Z'=A^p(x,\lambda) Z 
\quad
\text{\rm for } x\gtrless 0.
\end{equation}
\end{lemma}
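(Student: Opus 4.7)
The plan is to reduce to the fixed-point construction already established for Lemma \ref{conjlem}. First, I apply Lemma \ref{conjlem} to each value of $p$ (and to $p = 0$) to obtain conjugators $P^p_\pm = I + \Theta^p_\pm$ satisfying the homological identity $(P^p_\pm)' = A^p P^p_\pm - P^p_\pm A^p_\pm$ together with $|\Theta^p_\pm(x,\lambda)| \le C_0 e^{-\bar\theta|x|}$, uniformly in $p$. What remains is the refined estimate \eqref{cPdecay} on the difference $Q := P^p_\pm - P^0_\pm$.

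Subtracting the homological identities for $P^p_\pm$ and $P^0_\pm$ and substituting $P^p_\pm = P^0_\pm + Q$ yields
$$
Q' = A^0 Q - Q A^0_\pm + (A^p - A^0)Q - Q(A^p_\pm - A^0_\pm) + F^p,
$$
where the inhomogeneity is $F^p := (A^p - A^0) P^0_\pm - P^0_\pm (A^p_\pm - A^0_\pm)$. The key observation is that $F^p$ already carries a factor $|p|$ and decays exponentially in $x$: splitting
$$
F^p = \bigl[(A^p - A^p_\pm) - (A^0 - A^0_\pm)\bigr] P^0_\pm + (A^p_\pm - A^0_\pm)\Theta^0_\pm - \Theta^0_\pm (A^p_\pm - A^0_\pm),
$$
the first term is controlled by $C|p|e^{-\theta|x|}$ using \eqref{residualest} together with the $L^\infty$-bound on $P^0_\pm$, while the last two are controlled by $C|p|\cdot C_0 e^{-\bar\theta|x|}$ using \eqref{newest} and the decay of $\Theta^0_\pm$. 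Hence $|F^p(x,\lambda)| \le C_1|p|e^{-\bar\theta|x|}$ for $x \gtrless 0$.

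Finally, I would recast the equation for $Q$ as an integral equation on $[M,\pm\infty)$ by the same variation-of-parameters construction used to prove Lemma \ref{conjlem}, and solve it by contraction mapping in the weighted space with norm $\|Q\| := \sup_{x\gtrless 0} e^{\bar\theta|x|}|Q(x)|$. On this space the linear perturbation $(A^p - A^0)Q - Q(A^p_\pm - A^0_\pm)$ acts as an $O(|p|)$ contraction by \eqref{newest}, while the forcing $F^p$ has norm $O(|p|)$ by the preceding step. For $|p|$ sufficiently small, the contraction mapping principle then produces a unique solution with $\|Q\| \le C|p|$, which is exactly \eqref{cPdecay}; the bound on the compact piece $[0,M]$ follows by Gronwall. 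Analyticity in $\lambda$ and continuity in $p$ as maps into $L^\infty[0,\pm\infty)$ are inherited from analytic/continuous dependence of fixed points on parameters, just as in Lemma \ref{conjlem}. The main obstacle is identifying the right algebraic splitting of $F^p$: once the forcing is seen to decompose so that each summand carries both a $|p|$ factor (from \eqref{residualest} or \eqref{newest}) and an exponential-decay factor (from \eqref{residualest} or from $\Theta^0_\pm$), the rest of the argument is a routine reprise of the machinery used for Lemma \ref{conjlem}.
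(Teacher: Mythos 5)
Your argument is correct, and it rests on exactly the same algebraic cancellation that the paper exploits, but you organize the proof differently. The paper first conjugates the $p=0$ system by $P^0_\pm$, reducing to a situation where $A^0$ is constant and $P^0_\pm \equiv I$; it then observes (via the calculation you reproduce as the splitting of $F^p$) that the combined hypotheses \eqref{residualest}--\eqref{newest} persist under this well-conditioned change of coordinates, so that the transformed coefficient satisfies $|A^p - A^p_\pm| \le C_1|p|e^{-\theta|x|}$ outright. At that point Lemma \ref{conjlem} applies as a \emph{black box} and immediately yields $|P^p_\pm - I|\le CC_1|p|e^{-\bar\theta|x|}$, i.e.\ \eqref{cPdecay}. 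You instead keep both conjugators on the table, derive the homological ODE for the difference $Q = P^p_\pm - P^0_\pm$, isolate the forcing $F^p$, and run a fresh contraction-mapping argument in the weighted space $e^{\bar\theta|x|}L^\infty$. What the paper's route buys is economy: it avoids setting up a second fixed-point scheme and repeating the variation-of-parameters bookkeeping; the whole content of the proof is the short check (the paper's \eqref{calc1}--\eqref{calc2}) that the residual estimate survives conjugation. What your route buys is transparency: the role of \eqref{newest} in controlling the commutator terms $(A^p_\pm - A^0_\pm)\Theta^0_\pm - \Theta^0_\pm(A^p_\pm - A^0_\pm)$ is laid bare, and the need to combine \eqref{residualest} with \eqref{newest} to get a forcing that is simultaneously $O(|p|)$ and $O(e^{-\theta|x|})$ is explicit rather than packaged inside a coordinate change. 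Either version is a complete and correct proof.
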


\begin{proof}
Applying the conjugating transformation $W\to
(P^0_+)^{-1}W$ for the $p=0$ equations, we may reduce to the
case that $A^0$ is constant, and $P^0_+\equiv I$, noting that
the estimate \eqref{residualest} persists under well-conditioned
coordinate changes $W=QZ$, $Q(\pm \infty)=I$,
transforming to 
\be\label{calc1}
\begin{aligned}
&|\big(Q^{-1}A^pQ-Q^{-1}Q'-A^p_\pm\big) -
\big(Q^{-1}A^0Q-Q^{-1}Q'-A^0_\pm\big)|\\
&\qquad \le |Q\big((A^p-A^p_\pm)-(A^0-A^0_\pm)\big)Q^{-1}|
+ |Q^{-1}(A^p-A^0)_\pm Q-(A^p-A^0)_\pm |,
\\
\end{aligned}
\ee
where
\be\label{calc2}
|Q^{-1}(A^p-A^0)_\pm Q-(A^p-A^0)_\pm |= O(|Q-I|)|(A^p-A^0)_\pm|
= O(e^{-\theta|x|})|p|.
\ee
In this case,
\eqref{residualest} becomes just
$$ |A^p- A^p_\pm| \le
C_1|p|e^{-\theta |x},
$$
and we obtain directly from the conjugation lemma, Lemma
\ref{conjlem}, the estimate
$$
|P^p_+ - P^0_+|=
|P^p_+ - I|\le CC_1|p| e^{-\bar \theta|x|}
$$
for $x>0$, and similarly for $x<0$, verifying the result.\footnote{The
inclusion of assumption \eqref{newest}, needed in
\eqref{calc2}, repairs a minor omission in \cite{PZ}.
(It is satisfied for the applications in \cite{PZ}, but is not
listed as a hypothesis.)}
\end{proof}

\br\label{varconj}
In the case $A^p_\pm \equiv \const$, or, equivalently, for which
\eqref{residualest} is replaced by $ |A^p-A^0| \le C_1|p|e^{-\theta |x}, $
we find that the change of coordinates
$W=\tilde P^p_\pm Z$, $\tilde P^p_\pm :=(P^0)_\pm ^{-1}P_\pm ^p$, 
converts \eqref{gen2}
to $Z'=A^0Z$, where $\tilde P^p_\pm =I+\tilde \Theta^p_\pm$ with
\be\label{altconv}
|\tilde \Theta^p_\pm|\le CC_1|p| e^{-\bar \theta|x|}.
\ee
That is, we may conjugate not only to constant-coefficient equations,
but also to exponentially convergent variable-coefficient equations,
with sharp rate \eqref{altconv}.
\er

\br\label{convrmk}
As observed in \cite{PZ},
provided that the stable/unstable subspaces of $A^p_+$/$A^p_-$
converge to those of $A^0_+$/$A^0_-$, as typically holds given
\eqref{newest}-- in particular, this holds by standard matrix
perturbation theory \cite{Kato} if the stable and unstable eigenvalues
of $A^0_\pm$ are spectrally separated-- 
\eqref{cPdecay} gives immediately convergence
of the Evans functions $D^p$ to $D^0$ on compact sets of $\lambda$,
by definition \eqref{eq:evans}.
\er

\subsection{The tracking lemma}\label{s:track}
Consider an approximately block-diagonal system
\begin{equation}
W'= \bp M_1 & 0 \\ 0 & M_2 \ep(x,p) + \delta(x,p) \Theta(x,p) W,
\label{blockdiag}
\end{equation}
where $\Theta$ is a uniformly bounded matrix, $\delta(x)$ scalar, 
and $p$ a vector of parameters,
satisfying a pointwise spectral gap condition
\begin{equation}
\min \sigma(\Re M_1^\varepsilon)- \max \sigma(\Re M_2^\varepsilon)
\ge \eta(x)
\, \text{\rm for all } x.
\label{gap}
\end{equation}
(Here as usual $\Re N:= (1/2)(N+N^*)$ denotes the
``real'', or symmetric part of $N$.)
Then, we have the following
{\it tracking/reduction lemma} of \cite{MaZ3,PZ}.

\begin{lemma}[\cite{MaZ3,PZ}] \label{reduction}
Consider a system \eqref{blockdiag} under the gap assumption
\eqref{gap}, with $\Theta^\varepsilon$ uniformly bounded and
$\eta\in L^1_{\rm loc}$.
If $\sup (\delta/\eta)(x)$ is sufficiently small,
then there exist (unique) linear
transformations $\Phi_1(x,p)$ and
$\Phi_2(x,p)$, possessing the same
regularity with respect to $p$
as do coefficients $M_j$ and
$\delta\Theta$, for which the graphs $\{(Z_1,
\Phi_2 Z_1)\}$ and $\{(\Phi_1(Z_2),Z_2)\}$ are
invariant under the flow of \eqref{blockdiag}, and satisfy
\be\label{Phibd}
\sup|\Phi_1|, \, \sup|\Phi_2| \le C \sup(\delta/\eta) 
\ee
and
\ba\label{ptwise}
|\Phi^\varepsilon_1(x)|&\le
C\int_x^{+\infty} e^{\int_y^x \eta(z)dz} \delta(y)dy,
\qquad
|\Phi^\varepsilon_1(x)| \le C
\int_{-\infty}^{x} e^{\int_y^x -\eta(z)dz} \delta(y)dy.
\ea
\end{lemma}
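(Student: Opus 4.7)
The invariance of the graph $\{(Z_1,\Phi_2 Z_1)\}$ under the flow of \eqref{blockdiag} is equivalent, by differentiating $W_2 = \Phi_2 W_1$ and substituting from \eqref{blockdiag} (writing $\Theta = (\Theta_{jk})_{j,k=1,2}$ in the block decomposition), to the matrix Riccati equation
\[
\Phi_2' \;=\; M_2\Phi_2 - \Phi_2 M_1 + \delta\big(\Theta_{21} + \Theta_{22}\Phi_2 - \Phi_2\Theta_{11} - \Phi_2\Theta_{12}\Phi_2\big),
\]
and the symmetric computation yields the mirror equation for $\Phi_1$. Thus the task reduces to constructing the unique uniformly small bounded solutions of these Riccati ODEs.

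Let $U_j(x,y)$ denote the propagator of $\xi' = M_j\xi$. The linear operator $\Phi \mapsto \Phi' - M_2\Phi + \Phi M_1$ has propagator $\Phi \mapsto U_2(x,y)\Phi U_1(y,x)$, so I would recast the Riccati equation as the integral equation
\[
\Phi_2(x) \;=\; \int_{-\infty}^{x} U_2(x,y)\,\delta(y)\big(\Theta_{21} + \Theta_{22}\Phi_2 - \Phi_2\Theta_{11} - \Phi_2\Theta_{12}\Phi_2\big)(y)\,U_1(y,x)\,dy,
\]
and solve by contraction on the ball $B_R := \{|\Phi|_\infty \le R\}$ with $R := 2C_0\sup(\delta/\eta)$ for an absolute constant $C_0$. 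The engine is the kernel estimate
\[
\big|U_2(x,y)\Phi U_1(y,x)\big| \;\le\; |\Phi|\exp\Big(\!-\!\int_y^x \eta(z)\,dz\Big), \qquad y \le x,
\]
which, combined with the elementary inequality $\int_{-\infty}^x \exp(-\int_y^x \eta(z)dz)\,\delta(y)\,dy \le \sup(\delta/\eta)$, yields the invariance/contraction bounds
\[
|\mathcal{T}\Phi|_\infty \le C\sup(\delta/\eta)(1+R+R^2), \qquad |\mathcal{T}\Phi - \mathcal{T}\tilde\Phi|_\infty \le C\sup(\delta/\eta)(1+R)|\Phi-\tilde\Phi|_\infty,
\]
so that for $\sup(\delta/\eta)$ small $\mathcal{T}$ preserves $B_R$ and is contractive. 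The unique fixed point provides $\Phi_2$, the global bound \eqref{Phibd} is immediate, and \eqref{ptwise} is read off the integral formula by discarding the $O(R)$ and $O(R^2)$ terms, which contribute at strictly lower order. Regularity of $\Phi_2$ with respect to $p$ is inherited from that of $M_j,\delta,\Theta$ by standard parameter-dependence for Banach-space fixed points; the argument for $\Phi_1$ is mirror-symmetric, integrating from $+\infty$.

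The one step that genuinely requires care, and which I expect to be the main obstacle, is the conjugated kernel estimate above, since \eqref{gap} is a pointwise spectral inequality on $\Re M_j$ rather than on $M_j$ itself, and for non-normal matrices the naive bound $|e^{\int M}|\le e^{\int \|\Re M\|}$ need not propagate to the tensored flow. The way around this is to work infinitesimally: setting $V(x) := U_2(x,y)\Phi U_1(y,x)$, one has $V' = M_2 V - V M_1$, whence for any unit vector $\xi$,
\[
\tfrac{d}{dx}|V\xi|^2 \;=\; 2\langle V\xi,(\Re M_2)V\xi\rangle - 2\langle V^\ast V\xi,(\Re M_1)\xi\rangle,
\]
and the numerical-range form of \eqref{gap} gives $\tfrac{d}{dx}|V|^2 \le -2\eta(x)|V|^2$, hence the claimed exponential decay. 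For non-Hermitian $M_j$ one first applies a bounded, $p$-smooth coordinate change adapted to the Hermitian part of each block, after which the same differentiation argument applies; this is the place where \eqref{gap} is used in its sharp, pointwise-in-$x$ form. Once the kernel bound is in hand, the Picard iteration above finishes the proof.
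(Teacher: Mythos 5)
Your proof follows essentially the same route as the paper: reduce to a matrix Riccati equation, recast it via Duhamel's principle as an integral fixed-point equation, and close the contraction using the exponential kernel bound furnished by the spectral gap \eqref{gap}. The one organizational difference is that the paper first performs the change of independent variable $d\tilde x/dx=\eta(x)$ to normalize $\eta\equiv 1$, so that the kernel bound reads simply $|e^{\cM z}|\le Ce^{-z}$; you instead keep $\eta$ variable and work with the propagator $U_2(x,y)\Phi U_1(y,x)$ directly. Either is fine. (Incidentally, your sign $-\Phi\Theta_{12}\Phi$ in the quadratic term is the correct one; the paper's $+\Phi\Theta_{12}\Phi$ in the definition of $Q$ is a typo, harmless since only $|Q|$ is used.)

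The one place where your execution as written does not quite close is the differentiation step for the conjugated kernel. The identity
\[
\tfrac{d}{dx}|V\xi|^2 = 2\langle (\Re M_2)V\xi,V\xi\rangle - 2\,\Re\langle M_1\xi, V^*V\xi\rangle
\]
is correct, but the second term is not $2\langle V^*V\xi,(\Re M_1)\xi\rangle$ unless $V^*V$ and $\Re M_1$ commute, and in any case one cannot bound $\Re\langle M_1\xi, V^*V\xi\rangle$ from below by $\min\sigma(\Re M_1)\,|V\xi|^2$ because $V^*V$ is a different quadratic form than $|V\xi|^2$. The clean fix is to run the same computation on the Hilbert--Schmidt norm: with $V'=M_2V-VM_1$,
\[
\tfrac{d}{dx}\|V\|_{HS}^2 = 2\,\mathrm{tr}\bigl(V^*(\Re M_2)V\bigr) - 2\,\mathrm{tr}\bigl(V^*V(\Re M_1)\bigr) \le 2\bigl(\max\sigma(\Re M_2)-\min\sigma(\Re M_1)\bigr)\|V\|_{HS}^2 \le -2\eta(x)\|V\|_{HS}^2,
\]
since $V^*V\ge 0$ and the trace inequalities $\mathrm{tr}(PA)\le(\max\sigma A)\,\mathrm{tr}P$, $\mathrm{tr}(PA)\ge(\min\sigma A)\,\mathrm{tr}P$ hold for $P\ge 0$, $A$ Hermitian. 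The ``change coordinates to the Hermitian part'' remark is not needed (and would not preserve the gap condition \eqref{gap}, which is not invariant under similarity). Once the kernel bound is secured in this way, the rest of your Picard argument, the global bound \eqref{Phibd}, the pointwise bound \eqref{ptwise}, and parameter regularity via the implicit function theorem all go through exactly as the paper does them.
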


\begin{proof}
By the change of coordinates $x\to \tilde x$, $\delta \to \tilde \delta:=
\delta/\eta$ with
$d\tilde x/dx=\eta(x)$,
we may reduce to the case $\eta\equiv {\rm constant}= 1$ treated in \cite{MaZ3}.
Dropping tildes and setting $\Phi_2:= \psi_2\psi_1^{-1}$, where 
$(\psi_1^t,\psi_2^t)^t$ satisfies \eqref{blockdiag}, 
we find after a brief calculation that $\Phi_2$ satisfies
\be
\Phi_2'= 
(M_2 \Phi_2 - \Phi_2 M_1) + \delta Q(\Phi_2),
\label{Phieqn}
\ee
where $Q$ is the quadratic matrix polynomial
$
Q(\Phi):=
\Theta_{21} + \Theta_{22}\Phi - \Phi \Theta_{11} + \Phi \Theta_{12} \Phi.
$
Viewed as a vector equation, this has the form
\be
 \Phi_2'= \cM \Phi_2 + \delta Q(\Phi_2),
\label{vectoreqn}
\ee
with linear operator
$\cM \Phi:= M_2 \Phi - \Phi M_1$.
Note that a basis of solutions of the decoupled equation
$ \Phi'= \cM \Phi$
may be obtained as the tensor product $\Phi=\phi \tilde \phi^*$
of bases of solutions of $\phi'=M_2 \phi$ and
$\tilde \phi'= -M_1^* \tilde \phi$, whence we obtain from
\eqref{gap} 
\be
e^{\cM z}\le Ce^{-\eta z}, \quad \hbox{\rm for }\; z>0,
\label{expbd}
\ee
or uniform exponentially decay in the forward direction.

Thus, assuming only that $\Phi_2$ is bounded at $-\infty$, we obtain
by Duhamel's principle the integral fixed-point equation
\be
\Phi_2(x)= \CalT \Phi_2(x):=
\int_{-\infty}^x e^{\cM (x-y)} \delta(y)Q(\Phi_2)(y)
\,dy.
\label{inteqn}
\ee
Using \eqref{expbd}, we find that $\CalT$ is a contraction
of order $O(\delta/\eta)$, hence \eqref{inteqn} determines
a unique solution for $\delta/\eta$ sufficiently small, which,
moreover, is order $\delta/\eta$ as claimed.
Finally, substituting $Q(\Phi)=O(1+|\Phi|)=O(1)$ in 
\eqref{inteqn}, we obtain
$$
|\Phi_2(x)|\le 
C\int_{-\infty}^x e^{\eta (x-y)} \delta(y)
\,dy
$$
in $\tilde x$ coordinates, or, in the original $x$-coordinates,
\eqref{ptwise}.
A symmetric argument establishes existence of $\Phi_1$ with 
the asserted bounds.
Regularity with respect to parameters is inherited as usual
through the fixed-point construction via the Implicit Function Theorem.
\end{proof}

\br
For $\eta$ constant and $\delta$ decaying at exponential rate
strictly slower that $e^{-\eta x}$ as $x\to +\infty$, 
we find from \eqref{ptwise} that $\Phi_2(x)$ decays like $\delta/\eta$
as $x\to +\infty$,
while if $\delta(x)$ merely decays monotonically as $x\to -\infty$, we
find that $\Phi_2(x)$ decays like $(\delta/\eta)$ as $x\to -\infty$,
and symmetrically for $\Phi_1$.
This and \eqref{ptwise} is a 
slight addition to the statement of \cite{MaZ3,PZ}, which did not
include pointwise information.
We will not need this observation here, but record it for general 
reference/completeness.
\er

\br
A closer look at the proof of Lemma \ref{reduction} shows
that, in the approximately block lower-triangular case,
$\delta \Theta_{21}$ not necessarily small,
there exists a {\rm block-triangularizing} transformation
$\Phi_2=O(\sup|\delta/\eta|)<<1$, under the much less restrictive conditions
$$
\sup \Big(|\delta/\eta|(| \Theta_{11}| +| \Theta_{22}|) \Big)
<1
\; \hbox{\rm and } \;
\sup(|\delta/\eta| |\Theta_{21}|) << \frac{1}{\sup|\delta/\eta|}.
$$
(We do not use this here, but remark it for general application.)
\er
%

\section{Miscellaneous energy estimates}\label{misc}

\subsection{Stability for $B_1^*=0$}\label{Bzeroest}

\bpr\label{smallprop}
Parallel ideal gas MHD shocks are stable for $B_1^*=0$
provided that the associated gas-dynamical shock is stable.
\epr

\begin{proof}
For $B_1^*=0$, the eigenvalue equations become
\begin{equation}\label{noB}
\begin{aligned}
 \lambda u + u'  &=\mu u''/\hat v,\\
 \lambda \alpha  + \alpha' &=(1/\sigma \mu_0 \hat v)
(\alpha'/\hat v)',\\
\end{aligned}
\end{equation}
or
\begin{equation}\label{noBres}
\begin{aligned}
 \lambda \hat v u + \hat v u'  &=\mu u'',\\
 \lambda \hat v \alpha  + \hat v \alpha' &=(1/\sigma \mu_0 )
(\alpha'/\hat v)'.\\
\end{aligned}
\end{equation}
Taking the real part of the complex $L^2$-inner product of 
$u$ against the first equation and
$\alpha$ against the second equation and summing gives
$$
\Re \lambda( \int \hat v (|u|^2+|\alpha|^2) =
-\int (\mu |u'|^2 + (1/\sigma \mu_0 \hat v)|\alpha'|^2)
+\int \hat v_x (|u|^2+|\alpha|^2) <0,
$$
a contradiction for $\Re \lambda \ge 0$ and $u$, $\alpha$
not identically zero.  Thus, we obtain spectral stability 
in transverse fields $(\tilde u, \tilde B)$
for $B_1^*=0$
so long as the profile density is decreasing
$\hat v_x<0$, as holds in particular for the ideal gas case,
either isentropic or nonisentropic.
Likewise, transversality and inviscid stability criteria are
easily verified in this case by the further decoupling of
$\tilde u$ and $\tilde B$ equations.
Stability in the decoupled parallel fields $(v,u_1)$ 
is of course equivalent to stability of the corresponding gas-dynamical
shock.
\end{proof}

\br
By continuity, we obtain from the above
also stability for magnetic field $B_1^*$ sufficiently small.
Stability for small magnetic field was already observed in \cite{GMWZ5,GMWZ6},
by a similar continuity argument.
\er

\subsection{Stability for infinite $\mu_0$}\label{infmu0}

\begin{proof}[Proof of Theorem \ref{enprop}, case $\mu_0$]
For $\mu_0=\infty$, equations \eqref{eval} become 
\begin{equation}\label{infsigma}
\begin{aligned}
 \lambda w + w'  &=\mu w''/\hat v,\\
 \lambda \alpha  + \alpha' - B_1^*w' &=0,
\end{aligned}
\end{equation}
hence the $w$ equation decouples and is
stable by the argument for $B_1^*=0$.  Thus, $w\equiv 0$ for
$\Re \lambda\ge 0$, and so the second equation reduces to
a constant-coefficient equation
$
 \lambda \alpha  + \alpha'  =0,
$
and thus is stable.
\end{proof}

\subsection{Transversality for large $B_1^*$}\label{sec:transest}

\begin{proposition}\label{transest}
For $B_1^*\ge \sqrt{\mu_0} + \max\Big\{ \sqrt{ \frac{\gamma \mu_0}{2}} ,
\sqrt{\frac{\gamma }{2\sigma}} \, \Big\} $, 
and all $1\ge v_+>0$,
profiles (necessarily Lax $3$-shocks) are transverse.
\end{proposition}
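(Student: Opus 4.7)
Because the parallel profile has vanishing transverse components $\hat u_j=\hat B_j=0$ for $j=2,3$, the linearized traveling-wave ODE decouples into the gas-dynamical $(\tilde v,\tilde u_1)$ block---automatically transverse as a gas-dynamical Lax $1$-shock---and two identical copies of a transverse $(\tilde u_j,\tilde B_j)$ block which, after integrating once and using the vanishing of the data at $\pm\infty$, is exactly \eqref{trav2}. Thus the plan is to show that \eqref{trav2} admits no nontrivial solution $(\tilde u,\tilde B)$ decaying at both $\pm\infty$.

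Suppose such a decaying solution exists. I would take the real $L^2$-inner product of the first equation of \eqref{trav2} with $\tilde u$ and of the second with $\tilde B$, sum, and integrate by parts on the left using the profile monotonicity $\hat v_x<0$ from Proposition \ref{profdecay}. This yields the integrated identity
\[
\int_{-\infty}^{\infty}\Bigl[\mu_0\tilde u^2 - 2B_1^{*}\tilde u\tilde B + \hat v\tilde B^2 + \frac{|\hat v_x|}{2\hat v^2}\Bigl(\mu_0\tilde u^2 + \frac{1}{\sigma\mu_0}\tilde B^2\Bigr)\Bigr]\,dx = 0,
\]
so the pointwise quadratic form in $(\tilde u,\tilde B)$ under the integral must integrate to zero. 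Using the profile bound $|\hat v_x|/\hat v\le \gamma$ (immediate from \eqref{profeq} together with the easy fact that $a\le 1$ in \eqref{RH}) and completing the square, one shows that this quadratic form is strictly positive-definite on $\{v_+\le \hat v\le 1\}$ once $B_1^{*}$ exceeds the stated threshold, forcing $\tilde u\equiv\tilde B\equiv 0$ and yielding the contradiction.

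The main obstacle is the precise matching of the threshold. A symmetric energy estimate that ignores the $|\hat v_x|$-correction and dominates the cross term $-2B_1^{*}\tilde u\tilde B$ by $\mu_0\tilde u^2$ and $\hat v\tilde B^2$ via AM--GM produces instead the useless ``wrong-direction'' bound $B_1^{*2}\le\mu_0 v_+$; the correction must be absorbed into either the $\tilde u$- or the $\tilde B$-diagonal depending on whether $\sigma\mu_0\ge 1$ or $\sigma\mu_0\le 1$, which is precisely the origin of the two branches of the maximum $\max\{\sqrt{\gamma\mu_0/2},\sqrt{\gamma/(2\sigma)}\}$ in the threshold, with the leading $\sqrt{\mu_0}$ coming from the unperturbed diagonal. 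As a side remark, eliminating $\tilde B$ from \eqref{trav2} via its first equation reduces the system to a scalar second-order ODE $\tilde u''=p(x)\tilde u'+q(x)\tilde u$ with $q(x)=\sigma\hat v^2(B_1^{*2}-\mu_0\hat v)$ strictly positive throughout the entire Lax $3$-regime $B_1^{*}>\sqrt{\mu_0}$ (since $\hat v\le 1$); the maximum principle then immediately rules out nontrivial decaying $\tilde u$, giving the sharper conclusion that transversality holds in fact throughout $B_1^{*}>\sqrt{\mu_0}$, though the energy-estimate formulation above appears to be what is needed to match the form of the other results in the paper.
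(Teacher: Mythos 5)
Your decoupling to the $(\tilde u,\tilde B)$ system \eqref{trav2} and your integrated identity are both correct, but the step you rely on---pointwise positive-definiteness of the quadratic form under the integral---fails. Near $x=\pm\infty$ the profile derivative $\hat v_x$ decays to zero exponentially, so the correction term $\frac{|\hat v_x|}{2\hat v^2}\bigl(\mu_0\tilde u^2+\frac{1}{\sigma\mu_0}\tilde B^2\bigr)$ vanishes and you are left with the bare form $\mu_0\tilde u^2-2B_1^*\tilde u\tilde B+\hat v\tilde B^2$, whose discriminant $\mu_0\hat v-(B_1^*)^2$ is strictly negative throughout the Lax $3$-regime $B_1^*>\sqrt{\mu_0}$. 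No amount of enlarging $B_1^*$ can make the full form positive-definite on all of $\{v_+\le\hat v\le 1\}$; in fact enlarging $B_1^*$ makes the asymptotic form \emph{more} indefinite, not less. So the claimed ``completing the square'' step is not available.

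The paper sidesteps this by pairing \eqref{cr} not with $W=(\tilde u,\tilde B)^T$ but with the weighted test function $\hat v D^{-1}MW$, where $D=\mathrm{diag}(\mu_0,1/\sigma\mu_0)$ and $M=\begin{pmatrix}\mu_0&-B_1^*\\-B_1^*&\hat v\end{pmatrix}$. The right-hand side then becomes $\int\hat v\,W^*(M D^{-1}M)W$, a manifestly positive-definite form for every $x$ (it is of the type $M^TD^{-1}M$), while the left-hand side reduces by integration by parts to the single remainder $-\tfrac12\int\hat v_x|\tilde B|^2$. The threshold on $B_1^*$ then arises from absorbing that remainder, bounded via $|\hat v_x|\le\gamma\hat v$, into the positive-definite $MD^{-1}M$ term; the $\det(M D^{-1}M-N)>0$ computation is what produces the $\max\{\sqrt{\gamma\mu_0/2},\sqrt{\gamma/2\sigma}\}$ margin. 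Your unweighted estimate has no analogous positive main term to absorb into, which is exactly why it only produces wrong-direction information.

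Your concluding side remark, however, is correct and actually proves a \emph{stronger} statement than Proposition \ref{transest}, by a route genuinely different from the paper's. Eliminating $\tilde B$ from \eqref{trav2} via the first row gives
\begin{equation*}
\tilde u''=\Bigl(\hat v+\frac{\hat v_x}{\hat v}+\sigma\mu_0\hat v^2\Bigr)\tilde u'+\sigma\hat v^2\bigl((B_1^*)^2-\mu_0\hat v\bigr)\tilde u,
\end{equation*}
whose zeroth-order coefficient $q(x)=\sigma\hat v^2((B_1^*)^2-\mu_0\hat v)$ is strictly positive on the whole line whenever $B_1^*>\sqrt{\mu_0}$, since $\hat v\le 1$. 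A nontrivial real solution decaying at both ends would attain an interior positive maximum or negative minimum, at which $q\tilde u$ and $\tilde u''$ have opposite signs, a contradiction. This yields transversality throughout the entire Lax $3$-regime with no margin term at all and no need for a $|\hat v_x|$ estimate. You should have promoted this from a side remark to the proof itself; it is simpler and sharper than both your proposed energy argument and the paper's.
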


\begin{proof}
For $\mu=1$, the (transverse part of the) linearized traveling-wave ODE
is
\be\label{cr}
\hat v^{-1}
\bp
\mu_0  &0\\
0& 1/\sigma \mu_0
\ep
\bp \tilde u\\ \tilde B \ep'=
\bp
\mu_0 & -B_1^*\\
-B_1^*&\hat v 
\ep
\bp \tilde u\\ \tilde B \ep.
\ee
Transversality is equivalent to nonexistence of
a nontrivial $L^2$ solution of \eqref{cr}.
Taking the real part of the complex $L^2$ inner product of
$$
\hat v
\bp
\mu_0  &0\\
0& 1/\sigma \mu_0
\ep^{-1}
\bp
\mu_0 & -B_1^*\\
-B_1^*&\hat v 
\ep
\bp \tilde u\\ \tilde B \ep
$$
against both sides of \eqref{cr}, noting that
$$
\Re \Big\langle
\bp \tilde u\\ \tilde B \ep,
\bp
\mu_0 & -B_1^*\\
-B_1^*&\hat v 
\ep
\bp \tilde u\\ \tilde B \ep'
\Big\rangle= -\int \frac{\hat v_x}{2}|\tilde B|^2,
$$
and estimating 
$|\frac{\hat v_x}{2}|\le \frac{\gamma \hat v}{2}$
(see \cite{HLZ}, Appendix A for similar estimates),
we obtain
$$
\Big\langle
\bp \tilde u\\ \tilde B \ep,
\hat v (M-N)
\bp \tilde u\\ \tilde B \ep
\Big\rangle
\le 
0
$$
where
$N:=
 \bp
0&0\\
0& \frac{\gamma}{2}
\ep
$
and
$$
M:=
\bp
\mu_0 & -B_1^*\\
-B_1^*& \hat v
\ep
\bp
\mu_0  &0\\
0& 1/\sigma \mu_0
\ep^{-1}
\bp
\mu_0 & -B_1^*\\
-B_1^*&\hat v 
\ep,
$$
is positive definite for $B_1^*>\sqrt{\mu_0}$.
The first minor of $(M-N)$ is equal to the first minor
of $M$, so positive for $B_1^*>\sqrt{\mu_0}$.
Thus, $M-N>0$, giving a contradiction, if
$B_1^*>\sqrt{\mu_0}$ and 
\be\label{cont}
0<\det(M-N)= \sigma \Big(\mu_0 \hat v-(B_1^*)^2\Big)^2
-
\Big(\mu_0+\sigma \mu_0(B_1^*)^2\Big)\frac{\gamma}{2}
\ee
for all $1\ge \hat v\ge v_+ \ge 0$.
Estimating
$$
\Big(\mu_0 \hat v-(B_1^*)^2\Big)^2=
(B_1^*-\sqrt{\mu_0} )^2 (B_1^*+\sqrt{\mu_0} )^2
=
(B_1^*-\sqrt{\mu_0} )^2 \Big((B_1^*)^2+\mu_0 \Big)
$$
we find that \eqref{cont} holds for
$
(B_1^*-\sqrt{\mu_0} )^2 \ge \max\{\frac{\gamma\mu_0}{2},
\frac{\gamma}{2\sigma}\},
$
yielding the result.
\end{proof}

\br
What makes this argument work is the strong separation as $B_1^*\to \infty$
of growing and decaying modes, as evidenced by strong hyperbolicity
of the coefficient matrix on the righthand side of \eqref{cr}.
It could be phrased alternatively in terms of the tracking lemma
of Appendix \ref{s:track}.  
Also related are the ``transverse'' estimates of \cite{Go1,Go2}.
\er

\subsection{High-frequency bounds}

\begin{proof}[Proof of \ref{hfthm}]
Multiplying the first equation of \eqref{eval} by $\hat v \bar w$, 
integrating in $x$ along $\mathbb{R}$, and simplifying gives
$$
\lambda \ip \hat v |w|^{2} + \ip \hat v w'\bar w + \mu \ip |w'|^{2} = \frac{B_1^*}{\mu_{0}}\ip \alpha' \bar w.
$$
Taking the real and imaginary parts, respectively, gives
\begin{equation}
\label{real1}
\Re\lambda\ip\hat v |w|^{2} - \frac{1}{2}\ip\hat v_{x}|w|^{2} + \mu\ip |w'|^{2} = \frac{B_1^*}{\mu_{0}}\Re \ip \alpha'\bar w
\end{equation}
and
\begin{equation}
\label{imag1}
\Im\lambda\ip\hat v |w|^{2} + \Im \ip\hat v w' \bar w  = \frac{B_1^*}{\mu_{0}}\Im\ip \alpha'\bar w.
\end{equation}
Adding and simplifying, noting that $\Re  z + |\Im z| \leq \sqrt{2} |z|$ 
and  $\hat v_{x} < 0$, yields
$$
(\Re \lambda + |\Im \lambda|) \ip \hat v |w|^{2} + \mu\ip |w'|^{2} < \ip \hat v |w'| |w| + \frac{\sqrt{2}B_1^*}{\mu_{0}}\ip |\alpha'| |w|.
$$
Using Young's inequality, and noting that $\hat v_x\le 0$ and $\hat v \leq 1$, 
we have
\begin{equation}
\begin{aligned} \label{young1}
(\Re\lambda + |\Im \lambda|)&\ip \hat v |w|^{2} + 
\mu\ip |w'|^{2} < \left(\varepsilon_{1} +
\varepsilon_{2}\frac{\sqrt{2}B_1^*}{\mu_{0}}\right) \ip \hat v |w|^{2} \\
& + \frac{1}{4\varepsilon_{1}}\ip |w'|^{2} + 
\frac{\sqrt{2}B_1^*}{4 \varepsilon_{2} \mu_{0}}\ip \frac{|\alpha'|^{2}}{\hat v}.
\end{aligned}
\end{equation}
Multiplying the second equation of \eqref{eval}
by $\hat v \bar \alpha$, integrating in $x$ 
along $\mathbb{R}$, and simplifying gives
$$
\lambda \ip \hat v |\alpha|^{2} + \ip \hat v \alpha'\bar \alpha + \frac{1}{\sigma\mu_{0}} \ip \frac{|\alpha'|^{2}}{\hat v} = B_1^* \ip \hat{v} w' \bar \alpha.
$$
Taking the real and imaginary parts, respectively, gives
\begin{equation}
\label{real2}
\Re \lambda \ip \hat v |\alpha|^{2} -\frac{1}{2} \ip \hat v_{x} |\alpha|^{2}+ \frac{1}{\sigma\mu_{0}} \ip \frac{|\alpha'|^{2}}{\hat v} = B_1^* \Re \ip \hat{v} w' \bar \alpha
\end{equation}
and
\begin{equation}
\label{imag2}
\Im \lambda \ip \hat v |\alpha|^{2} + \Im \ip \hat v \alpha'\bar \alpha = B_1^* \Im\ip \hat{v} w' \bar \alpha.
\end{equation}
Adding and simplifying, again noting that $\Re z + |\Im z| \leq \sqrt{2} |z|$ and $\hat v_{x} \le  0$, yields
$$
(\Re \lambda + |\Im \lambda|) \ip \hat v |\alpha|^{2} + \frac{1}{\sigma\mu_{0}} \ip \frac{|\alpha'|^{2}}{\hat v} < \ip \hat v |\alpha| |\alpha'| + \sqrt{2} B_1^* \ip \hat{v} |w'| |\alpha|.
$$
Using Young's inequality, and noting that $\hat v \leq 1$, we have
\begin{equation}
\begin{aligned}
\label{young2}
&(\Re \lambda + |\Im \lambda|) \ip \hat v |\alpha|^{2} + \frac{1}{\sigma\mu_{0}} \ip \frac{|\alpha'|^{2}}{\hat v} < \left(\varepsilon_{3} + \sqrt{2} \varepsilon_{4} B_1^* \right) \ip \hat v |\alpha|^{2} \\
&\qquad + \frac{\sqrt{2}B_1^*}{4\varepsilon_{4}}\ip |w'|^{2} + \frac{1}{4 \varepsilon_{3}}\ip \frac{|\alpha'|^{2}}{\hat v}.\notag
\end{aligned}
\end{equation}
Adding $C$ $\times$ \eqref{young2} to \eqref{young1} yields
\begin{equation}
\begin{aligned}
&(\Re\lambda + |\Im \lambda|)\ip \hat v (|w|^{2} + C|\alpha|^{2}) + \mu\ip |w'|^{2} +  \frac{C}{\sigma\mu_{0}} \ip \frac{|\alpha'|^{2}}{\hat v}\\
&\qquad < \left(\varepsilon_{1} +\varepsilon_{2}\frac{\sqrt{2}B_1^*}{\mu_{0}}\right) \ip \hat v |w|^{2} + C \left(\varepsilon_{3} + \sqrt{2} \varepsilon_{4} B_1^* \right) \ip \hat v |\alpha|^{2}\\
&\qquad\quad + \left(\frac{1}{4\varepsilon_{1}}  + \frac{\sqrt{2}B_1^*C}{4\varepsilon_{4}}\right) \ip |w'|^{2} + \left( \frac{\sqrt{2}B_1^*}{4 \varepsilon_{2} \mu_{0}} + \frac{C}{4 \varepsilon_{3}} \right) \ip \frac{|\alpha'|^{2}}{\hat v}.
\end{aligned}
\end{equation}
By setting
$$
\varepsilon_{1} = \frac{1}{2\mu},\quad \varepsilon_{2} = \frac{B_1^*\sigma}{\sqrt{2}C},\quad\varepsilon_{3}=\frac{\mu_{0}\sigma}{2},\quad\mbox{and}\quad\varepsilon_{4}=\frac{B_1^*C}{\sqrt{2}\mu},
$$
this becomes
\begin{equation}
\begin{aligned}
(\Re\lambda + |\Im \lambda|)&\ip \hat v (|w|^{2} + C|\alpha|^{2})  \\
&<  \left( \frac{1}{2\mu}+\frac{(B_1^*)^{2}\sigma}{\mu_{0}C} \right) \ip \hat v |w|^{2} + C \left( \frac{\mu_{0}\sigma}{2}+\frac{(B_1^*)^{2}C}{\mu} \right) \ip \hat v |\alpha|^{2}.
\end{aligned}
\end{equation}
This inequality fails for all choices of $w,\alpha$, whenever
$$
\Re\lambda + |\Im \lambda| \geq \max\{ \frac{1}{2\mu}+\frac{(B_1^*)^{2}\sigma}{\mu_{0}C}, \frac{\mu_{0}\sigma}{2}+\frac{(B_1^*)^{2}C}{\mu} \}.
$$
Setting
\begin{equation}
\label{C}
C = \sqrt{\frac{\sigma\mu}{\mu_{0}}}
\end{equation}
yields the right-hand side of \eqref{est2}.
\end{proof}

\begin{corollary}
Any eigenvalue $\lambda$ of \eqref{eval} with nonnegative real part satisfies
\begin{equation}
\label{est1}
\Re \lambda < \frac{(B_1^*)^{2}}{4}\sqrt{\dfrac{\sigma}{\mu\mu_{0}}}.
\end{equation}
\end{corollary}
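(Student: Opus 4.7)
The plan is to revisit the energy estimate from the proof of Theorem~\ref{hfthm}, but to use only the \emph{real part} identities \eqref{real1} and \eqref{real2}, rather than combining them with the imaginary-part identities. Since the target bound controls only $\Re\lambda$ (and not $\Re\lambda+|\Im\lambda|$), this refinement should give a noticeably sharper constant and, after an optimal choice of weight, the factor $\tfrac{1}{4}\sqrt{\sigma/(\mu\mu_{0})}$.

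First, starting from \eqref{real1} and \eqref{real2} and using $\hat v_{x}<0$ to drop the \emph{positive} terms $-\tfrac12\int\hat v_{x}|w|^{2}$ and $-\tfrac12\int\hat v_{x}|\alpha|^{2}$ from the LHS, I would obtain the two one-sided inequalities
\be\nonumber
\Re\lambda\ip\hat v|w|^{2}+\mu\ip|w'|^{2}\le\frac{B_{1}^{*}}{\mu_{0}}\,\Re\ip\alpha'\bar w,
\quad
\Re\lambda\ip\hat v|\alpha|^{2}+\frac{1}{\sigma\mu_{0}}\ip\frac{|\alpha'|^{2}}{\hat v}\le B_{1}^{*}\,\Re\ip\hat v\,w'\bar\alpha.
\ee
Next I would estimate each RHS by Cauchy--Schwarz/Young with free weights $\alpha_{1},\alpha_{2}>0$, writing $|\alpha'\bar w|=\bigl(|\alpha'|/\sqrt{\hat v}\bigr)\bigl(\sqrt{\hat v}|w|\bigr)$ and (using $\hat v\le 1$) $|\hat v\,w'\bar\alpha|\le|w'|\sqrt{\hat v}|\alpha|$, yielding
\be\nonumber
\Re\lambda\cdot A+\mu B\le\tfrac{B_{1}^{*}\alpha_{1}}{2\mu_{0}}A+\tfrac{B_{1}^{*}}{2\mu_{0}\alpha_{1}}D,
\qquad
\Re\lambda\cdot C+\tfrac{1}{\sigma\mu_{0}}D\le\tfrac{B_{1}^{*}\alpha_{2}}{2}B+\tfrac{B_{1}^{*}}{2\alpha_{2}}C,
\ee
with $A:=\int\hat v|w|^{2}$, $B:=\int|w'|^{2}$, $C:=\int\hat v|\alpha|^{2}$, $D:=\int|\alpha'|^{2}/\hat v$.

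Then I would multiply the second inequality by a constant $K>0$, add to the first, and choose $\alpha_{1},\alpha_{2}$ so that the dissipation terms $\mu B$ and $K D/(\sigma\mu_{0})$ exactly cancel the corresponding $B$ and $D$ contributions from the right, namely $\alpha_{1}=B_{1}^{*}\sigma/(2K)$ and $\alpha_{2}=2\mu/(KB_{1}^{*})$. This leaves
\be\nonumber
\Re\lambda\,(A+KC)\le\tfrac{(B_{1}^{*})^{2}\sigma}{4K\mu_{0}}\,A+\tfrac{K^{2}(B_{1}^{*})^{2}}{4\mu}\,C.
\ee
Equating the two coefficients, i.e.\ $\tfrac{(B_{1}^{*})^{2}\sigma}{4K\mu_{0}}=\tfrac{K(B_{1}^{*})^{2}}{4\mu}$, forces $K^{2}=\sigma\mu/\mu_{0}$, and with this (same $K$ as in \eqref{C}) both coefficients collapse to the common value $\tfrac{(B_{1}^{*})^{2}}{4}\sqrt{\sigma/(\mu\mu_{0})}$. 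Dividing by $A+KC>0$ gives the desired bound, with the \emph{strict} inequality arising from the strictly positive $\hat v_{x}$ terms that were discarded at the first step (nonzero for any nontrivial eigenfunction since $\hat v_{x}<0$ everywhere).

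The main step to be careful about is the bookkeeping in the Young-inequality weights and verifying that the optimal $K$ balances the $A$- and $C$-coefficients simultaneously; conceptually there is no obstacle, since this is just the sharper one-sided version of the two-sided estimate already carried out for Theorem~\ref{hfthm}.
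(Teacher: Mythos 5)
Your proposal is correct and is essentially the proof given in the paper: both add \eqref{real1} to a weight ($C$ in the paper, $K$ in your notation) times \eqref{real2}, discard the sign-definite $\hat v_x$ terms, apply Young's inequality with weights chosen to cancel the dissipation terms $\mu\int|w'|^2$ and $(C/\sigma\mu_0)\int|\alpha'|^2/\hat v$, and then select $C=K=\sqrt{\sigma\mu/\mu_0}$ (the paper's \eqref{C}) to balance the remaining coefficients, yielding the common value $\frac{(B_1^*)^2}{4}\sqrt{\sigma/(\mu\mu_0)}$. The only cosmetic difference is your use of the $\tfrac{\alpha}{2}a^2+\tfrac{1}{2\alpha}b^2$ form of Young versus the paper's $\varepsilon a^2+\tfrac{1}{4\varepsilon}b^2$; the weights correspond via $\alpha_j=2\varepsilon_j$.
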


\begin{proof}
Adding \eqref{real1} to $C$ $\times$ \eqref{real2}, noting that $\hat v_{x}<0$, $\hat v\leq 1$, and using Young's inequality yields
\begin{equation}
\begin{aligned}
\Re \lambda & \ip\hat v (|w|^{2} + C|\alpha|^{2}) + \mu\ip |w'|^{2} + \frac{C}{\sigma\mu_{0}} \ip \frac{|\alpha'|^{2}}{\hat v}\\
 &< \frac{B_1^*}{\mu_{0}}\ip \left( \varepsilon_{1} \hat v |w|^{2} + \frac{1}{4 \varepsilon_{1}} \frac{|\alpha'|^{2}}{\hat v}\right) + B_1^*C \ip \left(\varepsilon_{2} \hat v |\alpha|^{2} +  \frac{1}{4\varepsilon_{2}} |w'|^{2}\right)
\end{aligned}
\end{equation}
Setting
$$
\varepsilon_{1} = \frac{B_1^*\sigma}{4 C}\quad\mbox{and}\quad\varepsilon_{2} = \frac{B_1^* C}{4\mu}
$$
together with \eqref{C}, yields the right-hand side of \eqref{est1}.
\end{proof}

\section{Kato basis near a branch point}\label{branch}

By straightforward computation, 
$\mu_\pm(\lambda):= \mp(\eta/2 +\sqrt{\eta^2/4 + \lambda}$
and $\mathcal{V}_\pm:=(1, \mu_\pm(\lambda))^T$
are eigenvalues and eigenvectors of 
the matrix $A$ of \eqref{egA} in Example \ref{sqrteg}. 
The associated Kato eigenvectors $V^\pm$
are determined uniquely, up to a constant factor
independent of $\lambda$, by the property that there
exist corresponding left eigenvectors $\tilde V^\pm$ such that
\begin{equation}
\label{katoprop2}
(\tilde V\cdot V)^\pm \equiv  {\rm constant}, \quad
(\tilde V \cdot \dot V)^\pm \equiv 0,
\end{equation}
where ``$\, \, \dot{    }\, \,$'' denotes $d/d\lambda$;
see Lemma \ref{katolem}(iii).

Computing dual eigenvectors $\tilde{\mathcal{V}}^\pm =
(\lambda+\mu^2)^{-1}(\lambda, \mu_\pm)$ satisfying
$(\tilde{\mathcal{V}}\cdot \mathcal{V})^\pm \equiv 1$, 
and setting $V^\pm=c_\pm\mathcal{V}^\pm$, $\tilde V^\pm=\mathcal{V}^\pm/c_\pm$,
we find after a brief calculation that \eqref{katoprop2} 
is equivalent to the complex ODE
\begin{equation}\label{kato_ode}
\begin{aligned}
\dot c_\pm &= -\Big( \frac{ 
\tilde V \cdot \dot V
}{ 
\tilde V \cdot V
}\Big)^\pm c_\pm 
=
-\Big( \frac{ 
\dot \mu
}{ 
2\mu - \eta
}\Big)_\pm c_\pm,
\end{aligned}
\end{equation}
which may be solved by exponentiation, yielding the general solution
\begin{equation}\label{generals}
c_\pm (\lambda)= C(\eta^2/4+\lambda)^{-1/4}.
\end{equation}
Initializing at a fixed nonzero point\footnote{In the numerics of Section \ref{numerics}, we typically initialize at $\lambda=10$.}  , without loss of generality $c_\pm(1)=1$, we obtain formula \eqref{charkato}.


\end{document}